\documentclass[12pt, showkeys]{amsart}%
\openup 3pt
%\def\makeheadbox{{%
%\hbox to0pt{\vbox{\baselineskip=10dd\hrule\hbox
%to\hsize{\vrule\kern3pt\vbox{\kern3pt
%\hbox{\bfseries Draft for discussion }
%\hbox{Date of this version: 13.04.21}
%\kern3pt}\hfil\kern3pt\vrule}\hrule}%
%\hss}}}

%%%%%%%%%%%%%

%\renewcommand{\familydefault}{<family>}

%%%%%%%%%%%%%
\usepackage{color,ulem}
\usepackage{graphicx}
\usepackage{subfigure}
\usepackage{amssymb}
\usepackage{amsmath,amsxtra}
\usepackage{multirow}
\usepackage{enumerate}
\usepackage{epstopdf}
\usepackage{cite,stmaryrd,txfonts}
\usepackage{tikz}
\usepackage{pgf}
\usetikzlibrary{snakes}
\usetikzlibrary{angles,calc,patterns}
\usetikzlibrary{shapes,arrows,automata}
\usepackage{bbm}%,yfonts}
\usepackage{dsfont}
\usepackage{accents}
 \usepackage{float}

%%%%%%%%%%%%%
\usepackage{epic}%% package for dash line \dashline
\usepackage{empheq}
\usepackage{cases}
\usepackage{diagbox}
\DeclareGraphicsRule{.tif}{png}{.png}{`convert #1 `dirname #1`/`basename #1 .tif`.png}

\def\cequiv{\raisebox{-1.5mm}{$\;\stackrel{\raisebox{-3.9mm}{=}}{{\sim}}\;$}}

%%%%%%%%%%%%%
\newtheorem{theorem}{Theorem}[section]
\newtheorem{remark}[theorem]{Remark}

\newtheorem{lemma}[theorem]{Lemma}

\newtheorem{definition}[theorem]{Definition}

%%%%%%%%%%%%%
\newcounter{mnote}
\setcounter{mnote}{0}

\let\oldmarginpar\marginpar
\renewcommand\marginpar[1]{\-\oldmarginpar[\raggedleft\footnotesize #1]
  {\raggedright\footnotesize #1}}

%%%%%%%%%%%%%
\usepackage{footnote}
\usepackage{booktabs}

\usepackage{rotating}

\numberwithin{equation}{section}

%%%%%%%%%%%%% reference
\usepackage{cite}

\usepackage[colorlinks,linkcolor=black,
anchorcolor=black,
citecolor=black]{hyperref}
\usepackage[shortlabels]{enumitem}
\setlist[enumerate]{nosep}
\usepackage{color, soul}
\soulregister\cite7
\usepackage{setspace}

\setlength{\textwidth}{39pc}
\setlength{\textheight}{53.13pc}
\setlength{\topmargin}{0cm}
\setlength\oddsidemargin{0pc}
\setlength\evensidemargin{0pc}

%%%%%%%%%%%%%
%\input{undertilde}
%\usepackage{accents}

\def\dv{{\rm div}}

% od=operator differential
% ods=operator differential star
% oS=operator S
\def\oT{\mathbf{T}}% od=operator T

\def\vs{v^*}

\def\sX{\xX}
\def\sY{\yY}

\def\icr{{\sf icr}}

%\def\hf{\boldsymbol{\mathscr{H}}}% hf= harmonic form
% hf= harmonic form

\def\R{\mathcal{R}}
\def\N{\mathcal{N}}

\usepackage[mathscr]{eucal}

\def\xD{\boldsymbol{\mathbf{D}}}

\def\xM{\boldsymbol{\mathbf{M}}}

\def\xX{{\boldsymbol{\mathbf{X}}}}
\def\xY{{\boldsymbol{\mathbf{Y}}}}

\def\xv{\boldsymbol{\mathbf{v}}}
\def\xw{\boldsymbol{\mathbf{w}}}

\def\yD{\boldsymbol{\mathbbm{D}}}

\def\yN{\boldsymbol{\mathbbm{N}}}

\def\yT{\boldsymbol{\mathbbm{T}}}

\def\yY{{\boldsymbol{\mathbbm{Y}}}}

\def\yv{\boldsymbol{\mathbbm{v}}}
\def\yw{\boldsymbol{\mathbbm{w}}}

\def\asD{\yD}

\def\tyN{\yN}

\def\sD{\xD}

\def\avv{\yv}
\def\avw{\yw}

\def\aoT{\yT}

\def\txM{\widetilde{\xM}}
\def\uxM{{\undertilde{\xM}}}

\def\tyN{\mathbf{\widetilde{{\mathbbm{N}}}}}
\def\uyN{{\undertilde{\yN}}}

\def\fzeta{\boldsymbol{\zeta}}
\def\feta{\boldsymbol{\eta}}

 % ix=index

%\def\ixalpha{\boldsymbol{\alpha}}

\def\bve{\boldsymbol{\varepsilon}}
\def\tr{{\rm tr}}
\def\Id{\boldsymbol{\bf Id}}

\def\mSigma{\boldsymbol{\Sigma}}

\def\msigma{\boldsymbol{\sigma}}
\def\mtau{\boldsymbol{\tau}}
\def\mzeta{\boldsymbol{\zeta}}
\def\meta{\boldsymbol{\eta}}

\def\mvarphi{\boldsymbol{\varphi}}
\def\mpsi{\boldsymbol{\psi}}

\def\vU{\boldsymbol{\rm U}}
\def\vV{\boldsymbol{\rm V}}

\def\vR{\boldsymbol{\rm R}}
\def\vP{\boldsymbol{\rm P}}

\def\vL{\boldsymbol{\rm L}}
\def\vH{\boldsymbol{\bf H}}

\def\vf{\boldsymbol{f}}

\def\vp{\boldsymbol{p}}
\def\vq{\boldsymbol{q}}
\def\vr{\boldsymbol{r}}
\def\vs{\boldsymbol{s}}

\def\vu{\boldsymbol{u}}
\def\vv{\boldsymbol{v}}
\def\vw{\boldsymbol{w}}

%$\mathbf{ABDEFGHIJKLMNUVPQXYZ}$ 
%
%$\mathbf{abcdefghijklmnopqrstuvwxyz}$
%
%$\boldsymbol{\mathbbm{ABDEFGHIJKLMNUVPQXYZ}}$%BDEFGHIJKLMNUVPQXYZ}$ 
%
%$\boldsymbol{\mathbbm{abcdefghijklmnopqrstuvwxyz}}$
%
%$\boldsymbol{\mathsf{ABDEFGHIJKLMNUVPQSTXYZ}}$ 
%
%$\boldsymbol{\mathsf{abcdefghijklmnopqrstuvwxyz}}$

\begin{document}

\title[Low-degree robust symmetric schemes for planar linear elasticity]{Low-degree robust Hellinger-Reissner finite element schemes for planar linear elasticity with symmetric stress tensors}

\author{Shuo Zhang}
\address{LSEC, Institute of Computational Mathematics and Scientific/Engineering Computing, Academy of Mathematics and System Sciences, Chinese Academy of Sciences, Beijing 100190; University of Chinese Academy of Sciences, Beijing, 100049; People's Republic of China}
\email{szhang@lsec.cc.ac.cn}

\thanks{The research is partially supported by NSFC (11871465) and CAS (XDB41000000).}

\subjclass[2010]{47A05, 47A65, 47N40, 65N12, 65N15, 65N30} %%

%47N40 Applications of operator theory in numerical analysis
%
%47A65 Structure theory of linear operators
%
%47A05 General (adjoints, conjugates, products, inverses, domains, ranges, etc.)
%
% %%%%65J05 General theory of numerical analysis in abstract spaces
%
%65J10 Numerical solutions to equations with linear operators
%
%65N12 Stability and convergence of numerical methods for boundary value problems involving PDEs
%
%65N15 Error bounds for boundary value problems involving PDEs
%
%65N30 Finite element, Rayleigh-Ritz and Galerkin methods for boundary value problems involving PDEs

\keywords{planar linear elasticity, Hellinger-Reissner, symmetric stress tensor, nonconforming, robust finite element scheme, partially adjoint, low-degree}

\begin{abstract}
In this paper, we study the construction of low-degree robust finite element schemes for planar linear elasticity on general triangulations. 

Firstly, we present a low-degree nonconforming Helling-Reissner finite element scheme. For the stress tensor space, the piecewise polynomial shape function space is 
$$
{\rm span}\left\{\left(\begin{array}{cc}1&0\\ 0 & 0\end{array}\right),\left(\begin{array}{cc}0&1\\ 1 & 0\end{array}\right),\left(\begin{array}{cc}0&0\\ 0 & 1\end{array}\right),\left(\begin{array}{cc}0&x\\ x & 0\end{array}\right),\left(\begin{array}{cc}0&y\\ y & 0\end{array}\right),\left(\begin{array}{cc}0&x^2-y^2\\ x^2-y^2 & 0\end{array}\right)\right\},
$$
the dimension of the total space is asymptotically 8 times of the number of vertices, and the supports of the basis functions are each a patch of an edge. The piecewise rigid body space is used for the displacement. Robust error estimations in $\mathbb{L}^2$ and broken $\vH(\dv)$ norms are presented. 

Secondly, we investigate the theoretical construction of schemes with lowest-degree polynomial shape function spaces. Specifically, a Hellinger-Reissner finite element scheme is constructed, with the local shape function space for the stress tensor being 
$$
{\rm span}\left\{\left(\begin{array}{cc}1&0\\ 0 & 0\end{array}\right),\left(\begin{array}{cc}0&1\\ 1 & 0\end{array}\right),\left(\begin{array}{cc}0&0\\ 0 & 1\end{array}\right),\left(\begin{array}{cc}0&x\\ x & 0\end{array}\right),\left(\begin{array}{cc}0&y\\ y & 0\end{array}\right)\right\},
$$ 
which is of the lowest degree for the local approximation of $\vH(\dv;\mathbb{S})$, and the space for the displacement is piecewise constants. Robust error estimations in $\mathbb{L}^2$ and broken $\vH(\dv)$ norms are presented for regular solutions and data. Meanwhile, accompanied with this Hellinger-Reissner finite element scheme, a minimal Navier-Lam\'e finite element scheme is constructed, with the local shape function space for the displacement being 
$$
{\rm span}\left\{\left(\begin{array}{c}1\\ 0\end{array}\right),\ \left(\begin{array}{c}0\\ 1\end{array}\right),\ \left(\begin{array}{c}x\\ 0\end{array}\right),\ \left(\begin{array}{c}y\\ x\end{array}\right),\ \left(\begin{array}{c}0\\ y\end{array}\right)\right\},
$$
which is of lowest degree for the local approximation in the norm $\|\cdot\|_0+\|\bve(\cdot)\|_0$. Robust error estimations in broken {\it energy} norms are presented for regular solutions and data. 
\end{abstract}

\maketitle

\tableofcontents

\section{Introduction}

The Hellinger-Reissner variational principle of planar linear elasticity seeks simultaneously for a $2\times 2$ symmetric stress tensor $\msigma$ contained in $\vH(\dv,\Omega;\mathbb{S})$ which are square integrable with square integrable divergence and for a square integrable displacement vector $\vu$. The symmetry constraint imposed on the stress tensor distinguishes the problem from the mixed formulation of general second-order problems. Meanwhile, the Lam\'e constants, $\mu$ and $\lambda$, characterize certain mechanical feature of the material, and the issue that the computational performance keeps robust when $\lambda$ tends to infinity is one which is to be taken into consideration. The Hellinger-Reissner variational principle has been a long-standing foundational model problem of computational mechanics. 
~\\

For the Hellinger-Reissner finite element methods, the earlier works \cite{Watwood.V;Hartz.B1968equilibrium,Arnold.D;Douglas.J;Gupta.C1984,Veubeke.B1965displacement,Johnson.C;Mercier.B1978} mainly focuses on composite elements, namely a composite structure of the triangulation is assumed. Later on, kinds of schemes with weakly imposed symmetry are designed, cf., e.g., \cite{Amara.M;Thomas.J1979,Arnold.D;Brezzi.F;Douglas.J1984,Arnold.D;Falk.R1988,stenberg1986construction,Arnold.D;Falk.R;Winther.R2007mixed,Boffi.D;Brezzi.F;Fortin.M2009reduced,Cockburn.B;Gopalakrishnan.J;Guzman.J2010mc}. In 2002, based on the elasticity complex, the first family of symmetric conforming mixed elements with polynomial shape functions was proposed for the planar case in \cite{Arnold.D;Winther.R2002}, which was extended to the three dimension by \cite{Arnold.D;Awanou.G;Winther.R2008mc}. Those elements from \cite{Arnold.D;Winther.R2002} and \cite{Arnold.D;Awanou.G;Winther.R2008mc} are later extended to any dimension by a family of conforming elements, with fewer degrees of freedom, proposed in \cite{Hu.J2015jcm,Hu.J;Zhang.S2014family,Hu.J;Zhang.S2015scm,Hu.J;Zhang.S2016m3as} by discovering a crucial structure of symmetric matrix-valued polynomials on simplicial grids and proving two basic algebraic results. Besides, various conforming finite elements are also constructed in, e.g.,  \cite{Adams.S;Cockburn.B2005,Arnold.D;Awanou.G;Winther.R2008mc,Arnold.D;Awanou.G2005m3as,Hu.J2015sinum,Hu.J;Ma.R2018conforming,Hu.J;Man.H;Zhang.S2014jsc}, with or without an (explicit) application of the elasticity complex. For these  conforming finite elements, the robust stability can be proved once the divergence of the stress space is proved contained in the displacement space and an inf-sup condition is proved. The inequality for sufficiently smooth $2\times 2$ tensors, not necessarily symmetric, in the form that (see Lemma \ref{lem:anewpi} below from \cite{Boffi.D;Brezzi.F;Fortin.M2013} and discussed in \cite{Arnold.D;Douglas.J;Gupta.C1984,Boffi.D;Brezzi.F;Fortin.M2009reduced})
\begin{equation}\label{eq:cfnewpi}
\|\mtau\|_{0,\Omega}\leqslant C(\|\mtau^D\|_{0,\Omega}+\|\dv\mtau\|_{0,\Omega}),
\end{equation}
where $\mtau^D$ is the deviatoric part of $\mtau$, is crucial for the robust coercivity. To reduce the complexity of conforming finite element schemes, nonconforming finite element schemes are designed. We refer to \cite{Arnold.D;Awanou.G;Winther.R2014m3as,Awanou.G2012,Gopalakrishnan.J;Guzman.J2011sinum,hu2019nonconforming,Hu.J;Man.H;Wang.J;Zhang.S2016simplest,Man.H;Hu.J;Shi.Z2009lower,Yi.S2005calcolo,Arnold.D;Winther.R2003nonconforming,awanou2009rotated,Xie.X;Xu.J2011scm,Yi.S2006m3as,Hu.J;Shi.Z2008sinum} for some discussions. Due to the nonconformity, the discrete analogue of \eqref{eq:cfnewpi} does not hold self-evidently for the nonconforming tensor functions; though, a locking-free behavior of the scheme of \cite{hu2019nonconforming} can be illustrated by numerical experiments. Based on these existing works, for the planar elasticity, the dimension of local polynomial shape function space for the symmetric stress tensor can be as low as 7 on rectangular grids (\cite{Man.H;Hu.J;Shi.Z2009lower}) and 12 on triangular grids (\cite{Arnold.D;Winther.R2003nonconforming,Sun.Y;Chen.S;Yang.Y2019amc}) both  by nonconforming elements. As from the point of approximation theory, a shape function space with dimension as low as 5 can be minimally sufficient for the local approximation of $\vH(\dv;\mathbb{S})$, in this paper, we will try to narrow the gap. We will focus on the problem with pure displacement boundary condition; the other conditions can be dealt with the same way. Besides, we focus ourselves on two dimension, and the high-dimensional case will be discussed in future. 
~\\

In this paper, firstly, we present a low-degree nonconforming Helling-Reissner finite element scheme on general triangulations. For the stress tensor space $\mSigma^{\small\rm KS'}_h$, the 6-dimensional local polynomial shape function space is 
$$
{\rm span}\left\{\left(\begin{array}{cc}1&0\\ 0 & 0\end{array}\right),\left(\begin{array}{cc}0&1\\ 1 & 0\end{array}\right),\left(\begin{array}{cc}0&0\\ 0 & 1\end{array}\right),\left(\begin{array}{cc}0&x\\ x & 0\end{array}\right),\left(\begin{array}{cc}0&y\\ y & 0\end{array}\right),\left(\begin{array}{cc}0&y^2-x^2\\ y^2-x^2 & 0\end{array}\right)\right\},
$$
and the piecewise 3-dimensional rigid body space $\vU^{\vR}_h$ is used for the displacement. The newly constructed $\mSigma^{\small\rm KS'}_h$--$\vU^{\vR}_h$ scheme possesses a $\lambda$-robust convergence rate; particularly, it is proved that $\|\msigma-\msigma_h\|_{0,\Omega}+\|\vu-\vu_h\|_{0,\Omega}\leqslant Ch\|\vf\|_{0,\Omega}$ on convex domains with a constant $C$ independent of $\lambda$. 

The stress tensor space $\mSigma^{\small\rm KS'}_h$ is defined by reconstructing the adjoint relationship. To construct a finite element space by reconstructing certain adjoint relation is suggested in \cite{Zhang.S2022padao-arxiv}, where a family of lowest-degree nonconforming finite element discretizations of $H\Lambda^k$, the spaces of exterior differential forms in $\mathbb{R}^n$ with $0\leqslant k\leqslant n$, are constructed, which form a discretized de Rham complex and a commutative diagram. Revealed by the theory of partially adjoint operators developed in \cite{Zhang.S2022padao-arxiv}, the discrete adjoint relation can lead to kinds of basic properties. The methodology is then utilized in \cite{Zhang.S2022pfemdde,Zhang.S2022pfemhl}, where primal finite element schemes are designed for the $\mathbf {H}(\mathbf
  {d})\cap\mathbf {H}(\boldsymbol {\delta}) $ elliptic problem and the Hodge Laplace problem on $\Lambda^k$ in $\mathbb{R}^n$ for $1\leqslant k\leqslant n-1$, respectively. In this paper, the so called adjoint relation between $\vH(\dv,\Omega;\mathbb{S})$ and $(H^1_0(\Omega))^2$ reads
\begin{equation}\label{eq:acontiadjoint}
(\dv\mtau,\vv)+(\mtau,\bve(\vv))=0,\ \forall\,\mtau\in \vH(\dv,\Omega;\mathbb{S}),\ \vv\in (H^1_0(\Omega))^2. 
\end{equation}
Actually, we note here that the operator $\dv:\mathbb{L}^2(\Omega,\mathbb{S})\to \vL^2(\Omega)$ (see the notations in Section \ref{sec:pre}) with domain $\vH(\dv,\Omega;\mathbb{S})$, say $(\dv,\vH(\dv,\Omega;\mathbb{S}))$, and the operator $(\bve,(H^1_0(\Omega))^2):\vL^2(\Omega)\to \mathbb{L}^2(\Omega,\mathbb{S})$ are a pair of adjoint operators, and this is why the Navier-Lam\'e principle and the Hellinger-Reissner principle are mathematically equivalent. Then, denoting by $\vV^{\rm KS}_{h0}$ the Kouhia-Stenberg space for the Navier-Lam\'e variational principle\cite{kouhia1995linear}, we define $\mSigma^{\small\rm KS'}_h$ so that the discrete relationship holds as
\begin{equation}\label{eq:adisadjoint}
\sum_{T\in\mathcal{T}_h}[(\dv\mtau_h,\vv_h)_T+(\mtau_h,\bve(\vv_h))_T]=0,\ \forall\,\mtau_h\in \mSigma^{\small\rm KS'}_h,\ \vv_h\in \vV^{\rm KS}_{h0}.
\end{equation}
We use \eqref{eq:adisadjoint} as a criterion for $\mSigma^{\rm KS'}_h$, not only because it provides appropriate continuity restriction, but also because the adjoint relation \eqref{eq:acontiadjoint} is crucial for \eqref{eq:cfnewpi}, and the analogue \eqref{eq:adisadjoint} is much useful for reconstructing \eqref{eq:cfnewpi} with finite element functions and further the $\lambda$-independent coercivity. 
  
The $\mSigma^{\small\rm KS'}_h$ constructed this way does not coincide with a {\it finite element} triple, defined by \cite{Ciarlet.P1978book} and adopted in, e.g., \cite{Brenner.S;Scott.R2008,Shi.Z;Wang.M2013book} and other numerous textbooks. We simply call $\mSigma^{\rm KS'}_h$ one of {\it non-Ciarlet} type. The non-Ciarlet type finite element spaces/functions are defined by combining the local shape function spaces and global continuity restrictions, and on a cell there may not be a unisolvence. The analysis of these schemes may rely on indirect approaches. Though, once the locally supported basis functions are figured out, the scheme can be implemented by the subroutine for standard Ciarlet-type finite element methods. Famous non-Ciarlet type finite elements include the Fortin-Soulie element \cite{Fortin.M;Soulie.M1983}, the Park-Sheen element \cite{Park.C;Sheen.D2003} and others. Recently, some low(est)-degree finite element schemes for respective model problems are constructed in the form of non-Ciarlet type; we refer to \cite{Zhang.S2020IMA,Zhang.S2021SCM,Liu.W;Zhang.S2022jsc} for some examples., and to \cite{Xi.Y;Ji.X;Zhang.S2020jsc,Zeng.H;Zhang.C;Zhang.S2020arxiv,Xi.Y;Ji.X;Zhang.S2021cicp,Zeng.H;Zhang.C;Zhang.S2020+BIT} for some related applications. The aforementioned non-Ciarlet type finite element spaces of \cite{Zhang.S2022padao-arxiv,Zhang.S2022pfemdde,Zhang.S2022pfemhl} all admit locally supported basis functions as well. For the space $\mSigma^{\small\rm KS'}_h$ of this paper, we show that it admits a set of basis functions, the supports of which are each contained in a patch of two neighbored cells. The total number of basis functions is asymptotically 8 times of the number of vertices, lower than other known finite element spaces for $\msigma$ on a same triangulation. Any single cell is covered by the supports of at most 9 basis functions of $\mSigma^{\rm KS'}_h$ and 3 basis functions of $\vU^{\vR}_h$; namely, the size of piecewise stiffness matrix is not bigger than $12\times 12$. 
~\\

Based on the spaces $\mSigma^{\rm KS'}_h$ and $\vU^{\vR}_h$, secondly in this paper, we investigate the theoretical construction of schemes with lowest-degree polynomial shape function spaces. Specifically, a Hellinger-Reissner finite element scheme is constructed, with the local shape function space for the stress tensor being 
$$
{\rm span}\left\{\left(\begin{array}{cc}1&0\\ 0 & 0\end{array}\right),\left(\begin{array}{cc}0&1\\ 1 & 0\end{array}\right),\left(\begin{array}{cc}0&0\\ 0 & 1\end{array}\right),\left(\begin{array}{cc}0&x\\ x & 0\end{array}\right),\left(\begin{array}{cc}0&y\\ y & 0\end{array}\right)\right\},
$$ 
and the space for the displacement is piecewise constants. The space $\mSigma^{\rm rKS'}_h$ for stress tensor is simply a reduction of $\mSigma^{\rm KS'}_h$. Robust error estimations in $\mathbb{L}^2$ and broken $\vH(\dv)$ norms are presented for regular solutions and data. This may be viewed a positive confirmation that the theoretical lower bound of degree of polynomials is an active one. 

Meanwhile, accompanied with this Hellinger-Reissner finite element scheme, by constructing 
\begin{equation}%\label{eq:adisadjoint}
\sum_{T\in\mathcal{T}_h}[(\dv\mtau_h,\vv_h)+(\mtau_h,\bve(\vv_h))]=0,\ \forall\,\mtau_h\in \mSigma^{\small\rm rKS'}_h,\ \vv_h\in \vV^{\rm rKS}_{h0},
\end{equation}
we construct a lowest-degree space $\vV^{\rm rKS}_{h0}$ for the Navier-Lam\'e principle, with the local shape function space for the displacement being 
\begin{equation}\label{eq:lowestshapeNL}
{\rm span}\left\{\left(\begin{array}{c}1\\ 0\end{array}\right),\ \left(\begin{array}{c}0\\ 1\end{array}\right),\ \left(\begin{array}{c}x\\ 0\end{array}\right),\ \left(\begin{array}{c}y\\ x\end{array}\right),\ \left(\begin{array}{c}0\\ y\end{array}\right)\right\},
\end{equation}
which is of lowest degree for the local approximation in the norm $\|\cdot\|_0+\|\bve(\cdot)\|_0$. Robust error estimations in broken {\it energy} norms are presented for regular solutions and data. Note that the shape function space \eqref{eq:lowestshapeNL} does not contain complete vector linear polynomials, and it works for the approximation in norms of $\bve(\vu)$, but not in norms of $\nabla\vu$. 
~\\

The remaining of the paper is organized as follows. Section \ref{sec:pre} collects some preliminaries, including the model problem, some basic polynomial spaces and finite element spaces, and the theory of partially adjoint operators from \cite{Zhang.S2022padao-arxiv} Section \ref{sec:fes} introduces the finite element spaces $\mSigma^{\rm KS'}_h$, $\mSigma^{\rm rKS'}_h$ and $\vV^{\rm rKS}_{h0}$ and constructs the basic Poincar\'e inequalities thereon. Section \ref{sec:fescheme} studies the $\mSigma^{\rm KS'}_h-\vU^{\vR}_h$ Hellinger-Reissner finite element scheme, provides the robust error estimation, and presents the locally supported basis functions of $\mSigma^{\rm KS'}_h$. Section \ref{sec:lowest} studies the $\mSigma^{\rm rKS'}_h-\vU^m_h$ Hellinger-Reissner finite element scheme and the $\vV^{\rm rKS}_{h0}$ Navier-Lam\'e finite element scheme, which are respectively the lowest-degree one, and prove their robust convergence. 

\section{Preliminaries}
\label{sec:pre}

\subsection{Model problems}

In this paper, we use a Greek letter in bold format to denote a $2\times 2$ tensor, such as $\mtau=\left[\begin{array}{cc} \mtau_{11} & \mtau_{12} \\ \mtau_{21}&\mtau_{22}\end{array}\right]$, and use a Latin letter in bold format for a $2\times 1$ vector, such as $\vv=\left(\begin{array}{c}\vv^1 \\ \vv^2\end{array}\right)$. As usual, we use $\nabla$ and $\dv$ to denote the gradient operator and div operator, respectively. For a vector $\vv$, $\bve(\vv)=\frac{1}{2}(\nabla\vv+(\nabla\vv)^\top)$, where $\cdot^\top$ denotes the transpose. For a tensor, $\dv$ is performed row by row. We further denote by $\uuline{\mathbb{R}}{}_{\bf s}$ the space of symmetric $2\times 2$ tensors, by $\Id$ the identity $2\times 2$ tensor, and by $\tr(\cdot)$ the trace of a $2\times 2$ tensor.

Let $\Omega\subset\mathbb{R}^2$ be a polygon. In this paper, we consider the linear elasticity problem, with clamped boundary condition,
\begin{equation}\label{eq:pdemodel}
\left\{
\begin{array}{rll}
\mathcal{A}\msigma&=\bve(\vu)&\mbox{in}\,\Omega,
\\
\dv\msigma&=\vf&\mbox{in}\,\Omega,
\\
\vu&=\boldsymbol{0}&\mbox{on}\,\partial\Omega.
\end{array}
\right.
\end{equation}
Here $\mathcal{A}$ is the compliance tensor of fourth order, defined such that 
$$
\mathcal{A}_{ijkl}=\mathcal{A}_{klij}=\mathcal{A}_{jikl},\ \ 1\leqslant i,j,k,l\leqslant 2,
$$
and 
$$
\mathcal{A}\mtau=\frac{1}{2\mu}\left(\msigma-\frac{\lambda}{2\lambda+2\mu}(\tr \msigma)\Id\right), \ \ \mtau\in\uuline{\mathbb{R}}{}_{\bf s}.
$$
Here $\mu\in[\mu_0,\mu_1]$ and $0<\lambda<\infty$ are the Lam\'e constants. Let $\mathcal{C}$ denote the elasticity tensor, namely
$$
\mathcal{C}_{ijkl}=\mathcal{C}_{klij}=\mathcal{C}_{jikl},\ \ 1\leqslant i,j,k,l\leqslant 2,
$$
and
$$
\mtau=\mathcal{A}\mathcal{C}\mtau=\mathcal{C}\mathcal{A}\mtau,\ \ \mtau\in\uuline{\mathbb{R}}{}_{\bf s}.
$$
Then 
$$
\msigma=\mathcal{C}\bve(\vu)=\lambda\tr\bve(\vu)\Id+2\mu\bve(\vu).
$$

We use as usual $H^1(\Omega)$, $H^1_0(\Omega)$ and $L^2(\Omega)$ to denote certain Sobolev spaces, and specifically, denote $\displaystyle L^2_0(\Omega):=\left\{w\in L^2(\Omega):\int_\Omega w dx=0\right\}$. We use $\vL^2(\Omega):=(L^2(\Omega))^2$, and similarly $\vH^1(\Omega)$, $\vH^1_0(\Omega)$ and $\vH^2(\Omega)$, and $\mathbb{L}^2(\Omega;\mathbb{S}):=\{\mtau\in (\vL^2(\Omega))^2:\mtau_{12}=\mtau_{21}\}$.  We use $(\cdot,\cdot)$ to represent $L^2$ inner product of $L^2(\Omega)$, $\vL^2(\Omega)$ or $\mathbb{L}^2(\Omega;\mathbb{S})$ or on a subdomain of $\Omega$  with corresponding subscripts.

The Navier-Lam\'e variational principle of \eqref{eq:pdemodel} is: to find $\vu\in\vH^1_0(\Omega)$, such that 
\begin{equation}\label{eq:primalmodel}
(\mathcal{C}\bve(\vu),\bve(\vv))=2\mu(\bve(\vu),\bve(\vv))+\lambda(\dv\vu,\dv\vv)=(\vf,\vv),\ \ \forall\,\vv\in \vH^1_0(\Omega),
\end{equation}
and the Hellinger-Reissner variational principle of \eqref{eq:pdemodel} is: to find $\msigma\in \vH(\dv,\Omega;\mathbb{S})$ and $\vu\in \vL^2(\Omega)$, such that 
\begin{equation}\label{eq:model}
\left\{
\begin{array}{rlll}
(\mathcal{A}\msigma,\mtau)&+(\vu,\dv\mtau)&=0&\forall\,\mtau\in \vH(\dv,\Omega;\mathbb{S});
\\
(\dv\msigma,\vv)&&=(\vf,\vv)&\forall\,\vv\in \vL^2(\Omega).
\end{array}
\right.
\end{equation} 
The solution $\vu$ of \eqref{eq:primalmodel} and of \eqref{eq:model} are identical, and for \eqref{eq:model}, $\msigma=\mathcal{C}\bve(\vu)$.

\begin{theorem}
\cite{Brenner.S;Sung.L1992mc,Vogelius.M1983nm}
Let $\vu$ be the solution  of \eqref{eq:primalmodel}. There exists a constant $C$, independent of $\lambda$, such that 
$$
\lambda\|\dv\vu\|_\Omega\leqslant C\|\vf\|_{0,\Omega}.
$$
If $\Omega$ is convex, then further
$$
\|\vu\|_{2,\Omega}\leqslant C\|\vf\|_{0,\Omega}.
$$ 
\end{theorem}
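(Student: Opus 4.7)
The plan is to transform \eqref{eq:primalmodel} into a perturbed Stokes problem via a pressure substitution, and apply Stokes-theory bounds whose constants are uniform as the penalty parameter tends to infinity.

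\textbf{Step 1 (energy estimate).} Testing \eqref{eq:primalmodel} with $\vv=\vu$ yields
$$
2\mu\|\bve(\vu)\|_{0,\Omega}^2+\lambda\|\dv\vu\|_{0,\Omega}^2=(\vf,\vu)\leqslant\|\vf\|_{0,\Omega}\|\vu\|_{0,\Omega}.
$$
Korn's inequality on $\vH^1_0(\Omega)$ gives $\|\vu\|_{1,\Omega}\leqslant C_K\|\bve(\vu)\|_{0,\Omega}$, so I obtain the $\lambda$-independent preliminary bounds $\|\vu\|_{1,\Omega}\leqslant C\|\vf\|_{0,\Omega}$ and $\sqrt{\lambda}\,\|\dv\vu\|_{0,\Omega}\leqslant C\|\vf\|_{0,\Omega}$.

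\textbf{Step 2 (first estimate via the inf--sup).} Set $p:=-\lambda\dv\vu$. Since $\vu\in\vH^1_0(\Omega)$, one has $p\in L^2_0(\Omega)$. Rewriting \eqref{eq:primalmodel} we get
$$
(p,\dv\vv)=2\mu(\bve(\vu),\bve(\vv))-(\vf,\vv),\qquad\forall\vv\in\vH^1_0(\Omega),
$$
hence $|(p,\dv\vv)|\leqslant C(\|\bve(\vu)\|_{0,\Omega}+\|\vf\|_{0,\Omega})\|\vv\|_{1,\Omega}\leqslant C\|\vf\|_{0,\Omega}\|\vv\|_{1,\Omega}$. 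The classical Stokes/Boguvski\u{\i} inf--sup condition on a bounded Lipschitz polygon asserts that for every $p\in L^2_0(\Omega)$
$$
\|p\|_{0,\Omega}\leqslant C\sup_{\vv\in\vH^1_0(\Omega)\setminus\{0\}}\frac{(p,\dv\vv)}{\|\vv\|_{1,\Omega}},
$$
which immediately yields $\lambda\|\dv\vu\|_{0,\Omega}=\|p\|_{0,\Omega}\leqslant C\|\vf\|_{0,\Omega}$, with $C$ independent of $\lambda$.

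\textbf{Step 3 (convex domain, $H^2$ regularity).} With the same $p$, the pair $(\vu,p)$ solves the generalized Stokes system
$$
-2\mu\,\dv\bve(\vu)+\nabla p=\vf,\qquad \dv\vu+\frac{1}{\lambda}p=0\ \text{ in }\Omega,\qquad \vu=\boldsymbol{0}\ \text{ on }\partial\Omega.
$$
On a convex polygon the stationary Stokes problem admits full $\vH^2(\Omega)\times H^1(\Omega)$ regularity with a constant depending only on the geometry of $\Omega$ and on $\mu_0$ (Kellogg--Osborn/Grisvard, reinterior angles $<\pi$ ruling out the corner singularity). Treating the term $p/\lambda$ as a perturbation of the divergence data and invoking Step~2 to control $\|p\|_{0,\Omega}/\lambda$, a standard bootstrap (apply Stokes regularity to $(\vu,p)$, then use the smallness of $p/\lambda$ in $H^1$ via a duality/lifting) delivers
$$
\|\vu\|_{2,\Omega}+\|p\|_{1,\Omega}\leqslant C\|\vf\|_{0,\Omega},
$$
with $C$ independent of $\lambda$.

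\textbf{Main obstacle.} Steps 1 and 2 are essentially mechanical, the real work is Step 3: establishing $\lambda$-independence of the constant in the $H^2$ regularity on a convex polygon. The delicate points are (i) verifying that the Cattabriga--type Stokes estimates on convex polygons carry a constant depending only on the geometry and on $\mu_0$, not on $\lambda$, and (ii) absorbing the $p/\lambda$ perturbation on the divergence side without re-introducing a $\lambda$-dependent bound, which amounts to tracking carefully that $\|p/\lambda\|_{1,\Omega}$ is controlled by $\|\vf\|_{0,\Omega}$ through the same Stokes regularity one is applying, closing a fixed-point--type argument uniformly in $\lambda$.
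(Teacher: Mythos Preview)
The paper does not prove this theorem; it is quoted as a known result with citations to \cite{Brenner.S;Sung.L1992mc,Vogelius.M1983nm}, so there is no proof in the paper to compare against.

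Your Steps~1 and~2 are correct and constitute the standard argument for the first estimate. Step~3 has the right structure, but the phrase ``a standard bootstrap \ldots\ delivers'' hides the only nontrivial point. Concretely: after the substitution $\tilde p=(1+\mu/\lambda)p$ you obtain the genuine penalized Stokes system $-\mu\Delta\vu+\nabla\tilde p=\vf$, $\dv\vu=-\tilde p/(\lambda+\mu)$. Applying convex-polygon Stokes regularity gives
\[
\|\vu\|_{2,\Omega}+\|\tilde p\|_{1,\Omega}\leqslant C\Big(\|\vf\|_{0,\Omega}+\|\dv\vu\|_{1,\Omega}\Big)=C\Big(\|\vf\|_{0,\Omega}+\tfrac{1}{\lambda+\mu}\|\tilde p\|_{1,\Omega}\Big),
\]
and for $\lambda$ large enough the last term is absorbed on the left. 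For $\lambda$ bounded the operator is uniformly elliptic and one invokes $H^2$ regularity for the Lam\'e system on convex polygons directly. This two-regime split is exactly the ``fixed-point--type'' closure you allude to; you should state it explicitly rather than leave it as a bootstrap, since the circularity (needing $\|p\|_1$ to get $\|p\|_1$) is otherwise not visibly broken.
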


Following \cite{Boffi.D;Brezzi.F;Fortin.M2009reduced,Boffi.D;Brezzi.F;Fortin.M2013}, we use $\mtau^D:=\mtau-\frac{1}{2}\tr (\mtau)\Id$ to denote the deviatoric part of $\mtau$, and then 
$$
(\msigma,\mtau)=\int_\Omega[\msigma^D:\mtau^D+\frac{1}{2}\tr(\msigma)\tr(\mtau)]
$$
and 
$$
(\mathcal{A}\msigma,\mtau)=\int_\Omega [\frac{1}{2\mu}\msigma^D:\mtau^D+\frac{1}{4(\lambda+\mu)}\tr (\msigma)\tr(\mtau)].
$$
\begin{lemma}\label{lem:anewpi}(\cite{Boffi.D;Brezzi.F;Fortin.M2013}, also \cite{Boffi.D;Brezzi.F;Fortin.M2009reduced,Arnold.D;Douglas.J;Gupta.C1984})
There exists a constant $C>0$, such that, for a $2\times 2$ tensor $\mtau\in \vH(\dv,\Omega)$ satisfying $\int_\Omega \tr(\mtau)dx=0$, we have 
$$
\|\mtau\|_{0,\Omega}\leqslant C(\|\mtau^D\|_{0,\Omega}+\|\dv\mtau\|_{0,\Omega}). 
$$
\end{lemma}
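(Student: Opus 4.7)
The plan is to split $\mtau$ into its deviatoric part and its trace part, and reduce the inequality to a bound on $\|\tr(\mtau)\|_{0,\Omega}$. Since the decomposition $\mtau = \mtau^D + \tfrac{1}{2}\tr(\mtau)\Id$ is $\mathbb{L}^2$-orthogonal (because $\tr(\mtau^D)=0$), a direct computation gives
$$
\|\mtau\|_{0,\Omega}^2 = \|\mtau^D\|_{0,\Omega}^2 + \tfrac{1}{2}\|\tr(\mtau)\|_{0,\Omega}^2,
$$
so it suffices to produce a bound of the form $\|\tr(\mtau)\|_{0,\Omega}\leqslant C(\|\mtau^D\|_{0,\Omega}+\|\dv\mtau\|_{0,\Omega})$.

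To control $\|\tr(\mtau)\|_{0,\Omega}$, I would invoke the standard right inverse of the divergence on $\vH^1_0(\Omega)$: because $\tr(\mtau)\in L^2_0(\Omega)$ by hypothesis, there exists $\vphi\in \vH^1_0(\Omega)$ with $\dv\vphi = \tr(\mtau)$ and $\|\vphi\|_{1,\Omega}\leqslant C\|\tr(\mtau)\|_{0,\Omega}$; this is classical for Lipschitz (in particular polygonal) domains. Then, using the identity $\tr(\mtau)\Id:\nabla\vphi = \tr(\mtau)\dv\vphi$ and $\tr(\mtau)\Id = 2(\mtau-\mtau^D)$, one integrates by parts (legitimately, since $\vphi$ vanishes on $\partial\Omega$) to obtain
$$
\|\tr(\mtau)\|_{0,\Omega}^2 = (\tr(\mtau)\Id,\nabla\vphi) = 2(\mtau,\nabla\vphi) - 2(\mtau^D,\nabla\vphi) = -2(\dv\mtau,\vphi) - 2(\mtau^D,\nabla\vphi).
$$
Applying Cauchy--Schwarz and the bound $\|\vphi\|_{1,\Omega}\leqslant C\|\tr(\mtau)\|_{0,\Omega}$ yields
$$
\|\tr(\mtau)\|_{0,\Omega}^2 \leqslant C\bigl(\|\dv\mtau\|_{0,\Omega} + \|\mtau^D\|_{0,\Omega}\bigr)\|\tr(\mtau)\|_{0,\Omega},
$$
from which the desired estimate on $\|\tr(\mtau)\|_{0,\Omega}$ follows, and combining with the orthogonal decomposition above closes the proof.

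The only nontrivial ingredient is the right inverse of the divergence on $\vH^1_0(\Omega)$, i.e.\ the well-known surjectivity of $\dv:\vH^1_0(\Omega)\to L^2_0(\Omega)$ with a bounded linear right inverse on a Lipschitz domain; this is the main obstacle in the sense that all the analytic content of the inequality is concentrated there, and it is precisely this property that fails in the discrete nonconforming setting and must be reconstructed by the adjoint-relation design of $\mSigma^{\rm KS'}_h$ later in the paper. Note also that the normalization $\int_\Omega\tr(\mtau)\,dx=0$ is essential and enters exactly at the step of constructing $\vphi$; without it, the inequality fails on constant multiples of $\Id$, as expected.
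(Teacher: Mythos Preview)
Your proof is correct and is the standard argument. Note, however, that the paper does not actually prove this lemma: it is stated with citations to \cite{Boffi.D;Brezzi.F;Fortin.M2013,Boffi.D;Brezzi.F;Fortin.M2009reduced,Arnold.D;Douglas.J;Gupta.C1984} and no proof is given. That said, your argument is exactly the template the paper follows when it proves the discrete analogue, Lemma~\ref{lem:anotherpi}: there the continuous right inverse of $\dv:\vH^1_0(\Omega)\to L^2_0(\Omega)$ is replaced by the discrete inf-sup condition on $\vV^{\rm KS}_{h0}$ from Lemma~\ref{lem:kornks}, and the integration-by-parts identity $(\mtau,\nabla\vphi)=-(\dv\mtau,\vphi)$ is replaced by the designed adjoint relation \eqref{eq:adisadjoint}. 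Your closing remark about where the analytic content is concentrated, and why it has to be rebuilt in the nonconforming discrete setting, is exactly on point.
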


\subsection{Triangulations and basic finite element spaces}

Let $\mathcal{T}_h$ be a shape-regular triangular subdivision of $\Omega$ with mesh size $h$, such that $\overline\Omega=\cup_{T\in\mathcal{T}_h}\overline T$, and every boundary vertex is connected to at least one interior vertex. Denote by $\mathcal{E}_h$, $\mathcal{E}_h^i$, $\mathcal{E}_h^b$, $\mathcal{N}_h$, $\mathcal{N}_h^i$ and $\mathcal{N}_h^b$ the set of edges, interior edges, boundary edges, vertices, interior vertices, and boundary vertices, respectively. We use the subscript $``\cdot_h"$ to denote the dependence on triangulation. In particular, an operator with the subscript $``\cdot_h"$ indicates that the operation is performed cell by cell. 

We use the notations below for spaces of polynomials:
\begin{equation}
\mSigma^{\rm m}:={\rm span}\left\{\left(\begin{array}{cc}1&0\\ 0 & 0\end{array}\right),\left(\begin{array}{cc}0&1\\ 1 & 0\end{array}\right),\left(\begin{array}{cc}0&0\\ 0 & 1\end{array}\right),\left(\begin{array}{cc}0&x\\ x & 0\end{array}\right),\left(\begin{array}{cc}0&y\\ y & 0\end{array}\right)\right\},
\end{equation}

\begin{equation}
\mSigma^{\rm m+}:={\rm span}\left\{\left(\begin{array}{cc}1&0\\ 0 & 0\end{array}\right),\left(\begin{array}{cc}0&1\\ 1 & 0\end{array}\right),\left(\begin{array}{cc}0&0\\ 0 & 1\end{array}\right),\left(\begin{array}{cc}0&x\\ x & 0\end{array}\right),\left(\begin{array}{cc}0&y\\ y & 0\end{array}\right),\left(\begin{array}{cc}0&x^2-y^2\\ x^2-y^2 & 0\end{array}\right)\right\},
\end{equation}

\begin{equation}
\vP_1:={\rm span}\left\{\left(\begin{array}{c}1\\ 0\end{array}\right),\ \left(\begin{array}{c}0\\ 1\end{array}\right),\ \left(\begin{array}{c}x\\ 0\end{array}\right),\ \left(\begin{array}{c}y\\ 0\end{array}\right),\ \left(\begin{array}{c}0\\ x\end{array}\right),\ \left(\begin{array}{c}0\\ y\end{array}\right)\right\},
\end{equation}

\begin{equation}
\vV^{\bve,\rm m}:={\rm span}\left\{\left(\begin{array}{c}1\\ 0\end{array}\right),\ \left(\begin{array}{c}0\\ 1\end{array}\right),\ \left(\begin{array}{c}x\\ 0\end{array}\right),\ \left(\begin{array}{c}y\\ x\end{array}\right),\ \left(\begin{array}{c}0\\ y\end{array}\right)\right\},
\end{equation}

\begin{equation}
\vP_0:={\rm span}\left\{\left(\begin{array}{c}1\\ 0\end{array}\right),\ \left(\begin{array}{c}0\\ 1\end{array}\right)\right\},
\end{equation}
and
\begin{equation}
\vR:={\rm span}\left\{\left(\begin{array}{c}1\\ 0\end{array}\right),\ \left(\begin{array}{c}0\\ 1\end{array}\right),\ \left(\begin{array}{c}y\\ -x\end{array}\right)\right\}.
\end{equation}
Denote $\displaystyle\mSigma^{\rm m}(\mathcal{T}_h):=\{\mtau_h\in \mathbb{L}^2(\Omega):\mtau_h|_T\in \mSigma^{\rm m},\ \forall T\in\mathcal{T}_h\},$ and denote similarly $\displaystyle\mSigma^{\rm m+}(\mathcal{T}_h)$, $\displaystyle\vV^{\bve,\rm m}(\mathcal{T}_h)$, $\displaystyle\vP_1(\mathcal{T}_h)$, $\displaystyle\vP_0(\mathcal{T}_h)$ and $\displaystyle\vR(\mathcal{T}_h)$. Denote by $\mathbb{P}^{\vR}_T$ the $\vL^2$ projection on a cell $T$ to $\vR$, by $\mathbb{P}^{\vR}_h$ the $\vL^2$ projection on a cell $T$ to $\vR(\mathcal{T}_h)$, by $\mathbb{P}^0_T$ the $\vL^2$ projection on a cell $T$ to constant, by $\mathbb{P}^0_h$ the $\vL^2$ projection on a cell $T$ to piecewise constant. Particularly,  $\mathbb{P}^0_T$ and $\mathbb{P}^0_h$ are used for the projections to scalar, vector and tensor constants; no ambiguity will be come across based on the contexts.

\subsubsection{The Kouhia-Stenberg finite element space}

On a triangulation $\mathcal{T}_h$, denote the Kouhia-Stenberg finite element space
$$
\vV^{\rm KS}_{h0}:=\left\{\vv_h=(\vv_h^1,\vv_h^2)\in \vP_1(\mathcal{T}_h):\vv_h^1\in H^1_0(\Omega),\ \int_e\vv_h^2\ \mbox{is\ continuous\ across}\ e\in\mathcal{E}_h^i,\ \int_e\vv_h^2=0\ \mbox{on}\ e\in\mathcal{E}_h^b\right\}.
$$
The corresponding discretization problem for \eqref{eq:primalmodel} is: to find $\vu_h^{\rm KS}\in \vV^{\rm KS}_{h0}$, such that 
\begin{equation}\label{eq:primalks}
(\mathcal{C}\bve_h(\vu_h^{\rm KS}),\bve_h(\vv_h))=(\vf,\vv_h),\ \ \forall\,\vv_h\in \vV^{\rm KS}_{h0}. 
\end{equation}

\begin{lemma}\label{lem:kornks}\cite{kouhia1995linear}
There exists a constant $C$ depending on the shape regularity of $\mathcal{T}_h$, such that 
\begin{equation}
\inf_{q_h\in P_0(\mathcal{T}_h)\cap L^2_0(\Omega)}\sup_{\vv_h\in \vV^{\rm KS}_{h0}}\frac{(\dv_h\vv_h,q_h)}{\|\vv_h\|_{1,h}\|q_h\|_{0,\Omega}}\geqslant C,
\end{equation}
and
\begin{equation}\label{eq:weakkorn}
\|\nabla_h\vv{}_h\|_{0,\Omega}\leqslant C\|\bve_h(\vv{}_h)\|_{0,\Omega},\ \ \mbox{for}\ \ \vv_h\in \vV^{\rm KS}_{h0}.
\end{equation}
\end{lemma}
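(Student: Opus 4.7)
The plan is to establish the two statements in sequence, since the discrete Korn inequality is the substantive part that drives everything. I begin by reducing \eqref{eq:weakkorn} via the pointwise identity
\begin{equation*}
|\nabla\vv|^2 - |\bve(\vv)|^2 = \tfrac{1}{2}(\partial_y v^1 - \partial_x v^2)^2,
\end{equation*}
valid cellwise, so that the claim becomes the bound $\|\mathrm{rot}_h\vv_h\|_{0,\Omega} \leqslant C\|\bve_h(\vv_h)\|_{0,\Omega}$ for every $\vv_h \in \vV^{\rm KS}_{h0}$, where $\mathrm{rot}_h\vv_h := \partial_y\vv_h^1 - \partial_x\vv_h^2$ is piecewise constant. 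On a single triangle, $\int_T(\mathrm{rot}\vv_h)^2$ can be rearranged as $\int_T \mathrm{rot}\vv_h(\partial_y\vv_h^1-\partial_x\vv_h^2)$ and, because the $P_1$ pieces have vanishing second derivatives, integration by parts reduces the global sum to a collection of edge terms. Two structural facts then take over: first, $\vv_h^1 \in H^1_0(\Omega)$ guarantees that contributions in the $\partial_y\vv_h^1$ direction carry no jumps and vanish on $\partial\Omega$; second, for the nonconforming component $\vv_h^2$, the piecewise constant factor appearing against each edge term can be replaced by its edge mean, and the resulting integral $\int_e \jump{\vv_h^2}$ vanishes by the $\int_e$-continuity condition defining $\vV^{\rm KS}_{h0}$. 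Absorbing the remaining polynomial terms into $\bve_h(\vv_h)$ finishes the bound.

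For the inf-sup condition, I would apply Fortin's criterion. Given $q_h \in P_0(\mathcal{T}_h)\cap L^2_0(\Omega)$, the continuous Stokes inf-sup produces $\vv \in \vH^1_0(\Omega)$ with $\dv\vv = q_h$ and $\|\vv\|_{1,\Omega} \leqslant C\|q_h\|_{0,\Omega}$. I would then construct an interpolation $\Pi_h^{\rm KS}\vv = (\Pi_h^1\vv^1, \Pi_h^{\rm CR}\vv^2)$, where $\Pi_h^{\rm CR}$ is the Crouzeix--Raviart interpolant (preserving every $\int_e\vv^2$) and $\Pi_h^1$ is a Scott--Zhang-type operator into the $P_1$-conforming scalar space with zero boundary trace. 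Stability $\|\Pi_h^{\rm KS}\vv\|_{1,h} \leqslant C\|\vv\|_{1,\Omega}$ is standard, and the Fortin property
\begin{equation*}
\int_T \dv(\vv-\Pi_h^{\rm KS}\vv)\,dx = \sum_{e\subset\partial T}\bigl(n_x \textstyle\int_e(\vv^1-\Pi_h^1\vv^1) + n_y\int_e(\vv^2-\Pi_h^{\rm CR}\vv^2)\bigr) = 0
\end{equation*}
follows once the second sum is killed by the CR property and the first is arranged to vanish by a small correction within the conforming $P_1$ space preserving the cell-means $\int_T \partial_x\vv^1$. Applied to $(\dv_h\Pi_h^{\rm KS}\vv, q_h) = (\dv\vv,q_h) = \|q_h\|_{0,\Omega}^2$, this yields the inf-sup with constant depending only on shape-regularity.

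The main obstacle is unquestionably the discrete Korn inequality \eqref{eq:weakkorn}, because fully nonconforming $P_1$ elements admit well-known counterexamples to Korn. The subtle point in Kouhia--Stenberg is that only one component is nonconforming, and the interplay between the fully conforming $\vv_h^1 \in H^1_0(\Omega)$ (which eliminates boundary and interelement obstructions in the $\partial_y$-direction of the rot identity) and the edge-mean continuity of $\vv_h^2$ (which orthogonalizes the jump terms against piecewise constants arising from $\partial_y\vv_h^1$) is precisely what is needed to salvage Korn. Keeping this identity book-keeping airtight, together with ensuring that the assumption ``every boundary vertex is connected to at least one interior vertex'' is actually used in ruling out parasitic rigid-body modes on boundary triangles, is the part where the argument is delicate and where a uniform constant in $h$ must be extracted.
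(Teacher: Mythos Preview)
The paper does not prove this lemma; it is quoted from \cite{kouhia1995linear} without argument. So the comparison is against the original Kouhia--Stenberg proof.

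Your integration-by-parts attack on the Korn inequality does not close. Writing $r:=\mathrm{rot}_h\vv_h$ and integrating $\int_T r\,\partial_y\vv_h^1$ and $-\int_T r\,\partial_x\vv_h^2$ by parts, the edge contributions on an interior edge $e$ are
\[
[r]\,n_y\!\int_e \vv_h^1
\qquad\text{and}\qquad
-\Big(\{r\}\,n_x\!\int_e\jump{\vv_h^2}+[r]\,n_x\!\int_e\{\vv_h^2\}\Big).
\]
Continuity of $\vv_h^1$ and the CR property of $\vv_h^2$ kill only the middle term. Both surviving terms carry the factor $[r]$, the jump of $\mathrm{rot}_h\vv_h$, which is exactly the quantity you are trying to control; it cannot be ``absorbed into $\bve_h(\vv_h)$''. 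Your statement that ``contributions in the $\partial_y\vv_h^1$ direction carry no jumps'' confuses continuity of $\vv_h^1$ with continuity of $\partial_y\vv_h^1$ (and hence of $r$), which fails.

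The argument in \cite{kouhia1995linear} integrates by parts a \emph{different} quantity. One uses the pointwise identity
\[
|\nabla\vv|^2 = 2|\bve(\vv)|^2-(\dv\vv)^2
\quad\text{provided}\quad
\int\partial_y v^1\,\partial_x v^2=\int\partial_x v^1\,\partial_y v^2,
\]
and establishes this last equality exactly on $\vV^{\rm KS}_{h0}$: integrating the cross term by parts cell-by-cell and summing produces on each interior edge
\[
\int_e(\partial_t\vv_h^1)\,\jump{\vv_h^2},
\]
where $\partial_t\vv_h^1$ is the \emph{tangential} derivative of the conforming component, hence single-valued and constant on $e$; the CR edge-mean condition then annihilates it. Boundary edges vanish because $\partial_t\vv_h^1=0$ there. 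The point is that the constant factor paired with $\jump{\vv_h^2}$ must come from derivatives of $\vv_h^1$ alone, not from $r$ (which depends on both components and jumps).

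For the inf--sup, your Fortin operator also has a gap: a conforming $P_1$ interpolant cannot in general be corrected to preserve $\int_T\partial_x\vv^1$ on every cell (there are roughly twice as many cells as interior vertices, so the system is over-determined). In \cite{kouhia1995linear} the deficit left by the conforming component is absorbed by a further correction in the CR component, whose edge degrees of freedom give enough flexibility; the construction and its stability are not a one-line ``small correction''.
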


\begin{lemma}\label{lem:convks}\cite{kouhia1995linear}
Let $\vu$ and $\vu_h^{\rm KS}$ be the solutions of \eqref{eq:primalmodel} and \eqref{eq:primalks}, respectively. Denote $\msigma_h^{\rm KS}:=\mathcal{C}\bve_h(\vu_h^{\rm KS})$. If $\vu\in\vH^2(\Omega)$, then
$$
\|\vu-\vu_h^{\rm KS}\|_{1,h}+\|\msigma-\msigma_h^{\rm KS}\|_{0,\Omega}\leqslant Ch(\|\vu\|_{2,\Omega}+\|\vf\|_{0,\Omega}).
$$
\end{lemma}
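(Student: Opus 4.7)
My plan is to apply a second-Strang-lemma argument with careful tracking of the dependence on $\lambda$. First I would fix a componentwise interpolant $\vI_h:\vH^1_0(\Omega)\cap \vH^2(\Omega)\to \vV^{\rm KS}_{h0}$: the first component is the nodal Lagrange interpolant into continuous piecewise-linears, and the second component is the Crouzeix-Raviart edge-midpoint interpolant. This choice gives the usual approximation bound $\|\vu-\vI_h\vu\|_{1,h}\leqslant Ch\|\vu\|_{2,\Omega}$, and crucially preserves piecewise-constant divergence moments, i.e. $(\dv_h(\vu-\vI_h\vu),q_h)=0$ for every $q_h\in P_0(\mathcal{T}_h)$. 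By Lemma \ref{lem:kornks}, together with $a_h(\vv_h,\vv_h):=(\mathcal{C}\bve_h(\vv_h),\bve_h(\vv_h))\geqslant 2\mu_0\|\bve_h(\vv_h)\|_{0,\Omega}^2$, the discrete bilinear form $a_h$ is coercive on $\vV^{\rm KS}_{h0}$ with constant independent of $\lambda$.

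Setting $\ve_h:=\vI_h\vu-\vu_h^{\rm KS}$, I would split
$$
C\|\ve_h\|_{1,h}^2\leqslant a_h(\ve_h,\ve_h)=a_h(\vI_h\vu-\vu,\ve_h)+\bigl[a_h(\vu,\ve_h)-(\vf,\ve_h)\bigr].
$$
The first (approximation) term splits into a $\mu$-part, bounded by $Ch\|\vu\|_{2,\Omega}\|\ve_h\|_{1,h}$ via standard estimates, and a $\lambda$-part $(\lambda\dv_h(\vu-\vI_h\vu),\dv_h\ve_h)$, which by the moment-preservation of $\vI_h$ equals $(\lambda\dv\vu-\mathbb{P}^0_h(\lambda\dv\vu),\dv_h\ve_h)$ and is controlled by $Ch\|\vf\|_{0,\Omega}\|\ve_h\|_{1,h}$ using the robust bound $\|\lambda\dv\vu\|_{0,\Omega}\leqslant C\|\vf\|_{0,\Omega}$ from the theorem of Brenner--Sung/Vogelius together with the $H^1$-bound on $p=\lambda\dv\vu$ extracted from $\nabla p=\vf+2\mu\dv\bve(\vu)$.

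The consistency term, after elementwise integration by parts and the identity $\dv\msigma=\vf$, reduces to $\sum_{e\in\mathcal{E}_h}\int_e\msigma\vn\cdot[\![\ve_h]\!]\,ds$. Because $\ve_h^1$ is fully continuous across edges and $\int_e[\![\ve_h^2]\!]=0$, the edge average of $(\msigma\vn)_2$ can be subtracted; a standard trace estimate then yields an $O(h)$ factor times $\|\ve_h\|_{1,h}$, the remaining factor being controlled by $\|\vu\|_{2,\Omega}+\|\vf\|_{0,\Omega}$ after writing $\msigma=2\mu\bve(\vu)+p\Id$ and using the $H^1$-control of $p$ described above. A triangle inequality with $\|\vu-\vI_h\vu\|_{1,h}\leqslant Ch\|\vu\|_{2,\Omega}$ delivers the estimate on $\|\vu-\vu_h^{\rm KS}\|_{1,h}$.

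Finally, for the stress error I would write $\msigma-\msigma_h^{\rm KS}=2\mu\bve_h(\vu-\vu_h^{\rm KS})+(p-p_h)\Id$ with $p_h:=\lambda\dv_h\vu_h^{\rm KS}$; the first piece is already bounded, while $\|p-p_h\|_{0,\Omega}$ is controlled in a $\lambda$-robust way through the inf-sup statement of Lemma \ref{lem:kornks} by a Stokes-type duality argument. The principal obstacle throughout is the $\lambda$-robustness of the pressure-like quantity $\lambda\dv\vu$ in both the approximation and the consistency estimates: it is precisely the Crouzeix-Raviart moment-preservation of the second component of $\vI_h$ and the inf-sup stability of the Kouhia-Stenberg pair that are needed to prevent $\lambda$ from entering the constants.
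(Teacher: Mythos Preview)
The paper does not prove this lemma; it is quoted from \cite{kouhia1995linear}. Your second-Strang-lemma strategy is in the right spirit, but there is a genuine gap in the approximation step.

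Your claim that the componentwise interpolant $I_h$ satisfies $(\dv_h(\vu-I_h\vu),q_h)=0$ for all $q_h\in P_0(\mathcal{T}_h)$ is false. The Crouzeix--Raviart interpolant in the second component does preserve cell averages of $\partial_y \vu^2$ (via the edge-mean property and the divergence theorem), but the nodal Lagrange interpolant in the first component does \emph{not} preserve cell averages of $\partial_x\vu^1$: on the unit reference triangle with $\vu^1=x^2$ one has $\int_T\partial_x \vu^1=1/3$ while $\int_T\partial_x(I_h^L\vu^1)=\int_T 1=1/2$. Consequently the $\lambda$-term $\lambda(\dv_h(\vu-I_h\vu),\dv_h(I_h\vu-\vu_h^{\rm KS}))$ cannot be rewritten as $(\lambda\dv\vu-\mathbb{P}^0_h(\lambda\dv\vu),\dv_h(I_h\vu-\vu_h^{\rm KS}))$, and a direct Cauchy--Schwarz bound leaves an uncontrolled factor of $\lambda$.

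The standard remedy, and essentially what Kouhia and Stenberg do, is to bring in the inf-sup condition of Lemma~\ref{lem:kornks} earlier: either recast the problem from the outset as a penalized Stokes system for $(\vu,p)$ with $p=\lambda\dv\vu$ and apply Brezzi's theory for perturbed saddle-point problems, or equivalently correct your interpolant by a Fortin trick---choose $\vw_h\in\vV^{\rm KS}_{h0}$ with $\dv_h\vw_h=\mathbb{P}^0_h\dv\vu-\dv_hI_h\vu$ and $\|\vw_h\|_{1,h}\leqslant C\|\mathbb{P}^0_h\dv\vu-\dv_hI_h\vu\|_{0,\Omega}\leqslant Ch\|\vu\|_{2,\Omega}$, then replace $I_h\vu$ by $I_h\vu+\vw_h$. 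With this corrected interpolant the divergence moments are genuinely matched, the $\lambda$-term vanishes (since $\dv_h(I_h\vu+\vw_h-\vu_h^{\rm KS})$ is piecewise constant), and the rest of your argument---the consistency bound via edge jumps and the inf-sup-based pressure/stress estimate---goes through as you wrote it.
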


\subsection{Theory of partially adjoint operators}

The theory of partially adjoint operators is introduced in \cite{Zhang.S2022padao-arxiv}, and we collect the basic results in this subsection. The details of the proof are omitted and referred to \cite{Zhang.S2022padao-arxiv} for simplicity. 

Let $\xX$ and $\yY$ be two Hilbert spaces with respective inner products $\langle\cdot,\cdot\rangle_{\xX}$ and $\langle\cdot,\cdot\rangle_{\yY}$, and respective norms $\|\cdot\|_{\xX}$ and $\|\cdot\|_{\yY}$. Let $(\oT,\txM):\xX\to \yY$ and $(\aoT,\tyN):\yY\to\xX$ be two closed operators, not necessarily densely defined. Denote, for $\vv\in\txM$, $\|\vv\|_\oT:=(\|\vv\|_\xX^2+\|\oT\vv\|_\yY^2)^{1/2}$, and for $\avv\in\tyN$, $\|\avv\|_{\aoT}:=(\|\avv\|_\yY^2+\|\aoT\avv\|_\xX^2)^{1/2}$. Denote
\begin{equation}\label{eq:uxm}
\uxM:=\left\{\vv\in \txM:\langle\vv,\aoT\avv\rangle_\xX-\langle\oT\vv,\avv\rangle_\yY=0,\ \forall\,\avv\in\tyN\right\},
\end{equation}
\begin{equation}\label{eq:uyn}
\uyN:=\left\{\avv\in \tyN:\langle\vv,\aoT\avv\rangle_\xX-\langle\oT\vv,\avv\rangle_\yY=0,\ \forall\,\vv\in\txM\right\},
\end{equation}
\begin{equation}\label{eq:xmb}
\xM_{\rm B}:=\left\{\vv\in \txM:\langle \vv,\vw \rangle_{\xX}=0,\forall\,\vw\in \N(\oT,\uxM);\ \langle\oT \vv,\oT \vw\rangle_{\yY}=0,\ \forall\,\vw\in \uxM\right\},
\end{equation}
and
\begin{equation}\label{eq:ynb}
\yN_{\rm B}:=\left\{\avv\in \tyN:\langle \avv,\avw \rangle_{\yY}=0,\forall\,\avw\in \N(\aoT,\uyN);\ \langle\aoT \avv,\aoT \avw\rangle_{\xX}=0,\ \forall\,\avw\in \uyN\right\}.
\end{equation}
We call $(\xM_{\rm B},\yN_{\rm B})$ the {\bf twisted} part of $(\txM,\tyN)$. 

In this paper, as usual, for an operator $(\oT,\sD)$, we use $\mathcal{N}(\oT,\sD):=\{\xv\in\sD:\oT\xv=0\}$ for the kernel space, and $\mathcal{R}(\oT,\sD):=\{\oT\xv:\xv\in\sD\}$ for the range space. 
\begin{definition}[Definition 2.13 of \cite{Zhang.S2022padao-arxiv}]\label{def:basepair}
A pair of closed operators $\left[(\oT,\txM):\xX\to\yY,(\aoT,\tyN):\yY\to\xX\right]$ is called a {\bf base operator pair}, if, with notations \eqref{eq:uxm}, \eqref{eq:uyn}, \eqref{eq:xmb} and \eqref{eq:ynb}, 
\begin{enumerate}
\item $\R(\oT,\txM)$, $\R(\aoT,\tyN)$, $\R(\oT,\uxM)$ and $\R(\aoT,\uyN)$ are all closed; 
\item $\mathcal{N}(\oT,\xM_{\rm B})$ and $\mathcal{R}(\aoT,\yN_{\rm B})$ are isomorphic, and $\mathcal{N}(\aoT,\yN_{\rm B})$ and $\mathcal{R}(\oT,\xM_{\rm B})$ are isomorphic. 
\end{enumerate}
\end{definition}

For $\left[(\oT,\txM),(\aoT,\tyN)\right]$ a base operator pair, for nontrivial $\R(\aoT,\yN_{\rm B})$ and $\N(\oT,\xM_{\rm B})$, denote 
\begin{equation}\label{eq:alpha}
\displaystyle \alpha:=\inf_{0\neq \vv\in\mathcal{N}(\oT,\xM_{\rm B})}\sup_{\vw\in \mathcal{R}(\aoT,\yN_{\rm B})}\frac{\langle\vv,\vw\rangle_\xX}{\|\vv\|_\xX\|\vw\|_\xX}
=
\inf_{0\neq \vw\in\mathcal{R}(\aoT,\yN_{\rm B})}\sup_{\vv\in\mathcal{N}(\oT,\xM_{\rm B})}\frac{\langle\vv,\vw\rangle_\xX}{\|\vv\|_\xX\|\vw\|_\xX},
\end{equation} 
and for nontrivial $\N(\aoT,\yN_{\rm B})$ and $\R(\oT,\xM_{\rm B})$, denote
\begin{equation}\label{eq:beta}
\beta:=\inf_{0\neq \avv\in\mathcal{N}(\aoT,\yN_{\rm B})}\sup_{\avw\in\mathcal{R}(\oT,\xM_{\rm B})}\frac{\langle\avv,\avw\rangle_\yY}{\|\avv\|_\yY\|\avw\|_\yY}
=
\inf_{0\neq \avw\in\mathcal{R}(\oT,\xM_{\rm B})}\sup_{\avv\in\mathcal{N}(\aoT,\yN_{\rm B})}\frac{\langle\avv,\avw\rangle_\yY}{\|\avv\|_\yY\|\avw\|_\yY}.
\end{equation}
Then $\alpha>0$ and $\beta>0$. We further make a convention that,
\begin{equation}\label{eq:alphabeta=1}
\displaystyle \left\{
\begin{array}{ll}
\alpha=1, & \mbox{if}\, \mathcal{N}(\oT,\xM_{\rm B})=\mathcal{R}(\aoT,\yN_{\rm B})=\left\{0\right\};
\\
\beta=1, & \mbox{if}\, \mathcal{N}(\aoT,\yN_{\rm B})=\mathcal{R}(\oT,\xM_{\rm B})=\left\{0\right\}.
\end{array}
\right.
\end{equation}

\begin{definition}[Definition 2.15 of \cite{Zhang.S2022padao-arxiv}]\label{def:pao}
For $\left[(\oT,\txM):\xX\to \yY,(\aoT,\tyN):\yY\to\xX\right]$ a base operator pair, two operators $(\oT,\sD)\subset(\oT,\txM)$ and $(\aoT,\asD)\subset(\aoT,\tyN)$ are called {\bf partially adjoint} based on $\left[(\oT,\txM),(\aoT,\tyN)\right]$, if
\begin{equation}\label{eq:pacondition}
\displaystyle \sD=\left\{\vv\in \txM:\langle \vv,\aoT \avv\rangle_\sX-\langle \oT \vv,\avv\rangle_\sY=0,\ \forall\,\avv\in\asD\right\},
\end{equation}
and
\begin{equation}
\displaystyle \asD=\left\{\avv\in \tyN: \langle \vv,\aoT \avv\rangle_\sX-\langle \oT \vv,\avv\rangle_\sY=0,\ \forall\,\vv\in \sD\right\}.
\end{equation}
\end{definition}

\begin{definition}[Definition 2.8 of \cite{Zhang.S2022padao-arxiv}]
For $(\oT,\xD):\xX\to \yY$ a closed  operator, denote 
$$
\xD^{\boldsymbol \lrcorner}:=\left\{\xv\in \xD:\langle \xv,\xw\rangle_\xX=0,\ \forall\,\xw\in \mathcal{N}(\oT,\xD)\right\}.
$$ Define the {\bf index of closed range} of $(\oT,\xD)$ as
\begin{equation}\label{eq:deficr}
\mathsf{icr}(\oT,\xD):=\left\{\begin{array}{rl}
\displaystyle \sup_{0\neq\xv\in \xD^{\boldsymbol \lrcorner}}\frac{\|\xv\|_\xX}{\|\oT\xv\|_\yY},&\mbox{if}\ \xD^{\boldsymbol \lrcorner}\neq\left\{0\right\};
\\
0,&\mbox{if}\ \xD^{\boldsymbol \lrcorner}=\left\{0\right\}.
\end{array}\right.
\end{equation}
\end{definition}
Note that $\icr(\oT,\sD)$ evaluates in $\left[0,+\infty\right]$, and $\R(\oT,\sD)$ is closed if and only if $\icr(\oT,\sD)<\infty$. Further, $\icr(\oT,\xD^{\boldsymbol \lrcorner})$ plays like the constant for Poincar\'e inequality in the sense that $\|\vv\|_\xX\leqslant \icr(\oT,\xD^{\boldsymbol \lrcorner})\|\oT\vv\|_\xY$ for $\vv\in\xD^{\boldsymbol \lrcorner}$.

\begin{theorem}[quantified closed range theorem, Theorem 2.21 of \cite{Zhang.S2022padao-arxiv}]\label{thm:chpi}
For $\left[(\oT,\sD),(\aoT,\asD)\right]$ partially adjoint based on $\left[(\oT,\txM),(\aoT,\tyN)\right]$, with notations given in \eqref{eq:uxm}, \eqref{eq:uyn}, \eqref{eq:alpha}, \eqref{eq:beta} and \eqref{eq:alphabeta=1}, if $\icr(\aoT,\asD)<\infty$,
\begin{equation}
\icr(\oT,\sD)\leqslant (1+\alpha^{-1})\cdot\icr(\oT,\txM)+\alpha^{-1}\icr(\aoT,\asD)+\icr(\oT,\uxM);
\end{equation}
if $\icr(\oT,\sD)<\infty$,
\begin{equation}\label{eq:icrtstardstar}
\icr(\aoT,\asD) \leqslant (1+\beta^{-1})\cdot\icr(\aoT,\tyN)+\beta^{-1}\icr(\oT,\sD)+\icr(\aoT,\uyN).
\end{equation}
\end{theorem}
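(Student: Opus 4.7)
The plan is to establish the first inequality directly; the second one follows by exchanging the roles of $(\oT,\sD)$ and $(\aoT,\asD)$ in the partially adjoint structure and swapping $\alpha$ with $\beta$. Assume $\icr(\aoT,\asD)<\infty$ and fix $\vv\in \sD^{\boldsymbol \lrcorner}$; the goal is to bound $\|\vv\|_{\xX}$ by the stated constant times $\|\oT\vv\|_{\yY}$.

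First, I would ``peel off'' the portion of $\oT\vv$ that lies in the accessible range $\R(\oT,\uxM)$. Let $\avw_0$ be the $\yY$-projection of $\oT\vv$ onto $\R(\oT,\uxM)$ and choose $\vv_0\in\uxM^{\boldsymbol \lrcorner}$ with $\oT\vv_0 = \avw_0$; by the definition of $\icr(\oT,\uxM)$, one has $\|\vv_0\|_{\xX}\le \icr(\oT,\uxM)\|\oT\vv\|_{\yY}$, which accounts for the standalone third term. The residual $\vw:=\vv-\vv_0\in\sD$ then satisfies $\|\oT\vw\|_{\yY}\le\|\oT\vv\|_{\yY}$ and, crucially, $\oT\vw\perp_{\yY} \R(\oT,\uxM)$. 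Next I would decompose $\vw$ inside $\txM$ as $\vw=\vw^\perp+\vw^k$, with $\vw^\perp\in\txM^{\boldsymbol \lrcorner}$ and $\vw^k\in\N(\oT,\txM)$. The Poincar\'e-type bound on $\txM$ yields $\|\vw^\perp\|_{\xX}\le \icr(\oT,\txM)\|\oT\vv\|_{\yY}$. The definition of $\xM_{\rm B}$ gives the orthogonal splitting $\N(\oT,\txM)=\N(\oT,\uxM)\oplus^\perp \N(\oT,\xM_{\rm B})$, and since $\uxM\subset\sD$ forces $\N(\oT,\uxM)\subset\N(\oT,\sD)$, the orthogonality $\vv\perp\N(\oT,\sD)$ together with $\vv_0\in \uxM^{\boldsymbol \lrcorner}$ confines $\vw^k$ to $\N(\oT,\xM_{\rm B})$.

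Finally, $\|\vw^k\|_{\xX}$ is estimated via the inf-sup constant $\alpha$. For an arbitrary $\avw=\aoT\avz$ with $\avz\in\yN_{\rm B}\cap\asD^{\boldsymbol \lrcorner}$, the hypothesis furnishes $\|\avz\|_{\yY}\le \icr(\aoT,\asD)\|\avw\|_{\xX}$, and partial adjointness applied to $\vw\in\sD$ and $\avz\in\asD$ gives $\langle \vw,\aoT\avz\rangle_{\xX}=\langle\oT\vw,\avz\rangle_{\yY}$. Writing $\langle\vw^k,\avw\rangle_{\xX}=\langle\vw,\aoT\avz\rangle_{\xX}-\langle\vw^\perp,\aoT\avz\rangle_{\xX}$ and applying Cauchy-Schwarz together with the second-step bound leads to
\[
|\langle \vw^k,\avw\rangle_{\xX}|\le \bigl(\icr(\aoT,\asD)+\icr(\oT,\txM)\bigr)\|\oT\vv\|_{\yY}\,\|\avw\|_{\xX},
\]
so the definition of $\alpha$ gives $\|\vw^k\|_{\xX}\le \alpha^{-1}(\icr(\aoT,\asD)+\icr(\oT,\txM))\|\oT\vv\|_{\yY}$. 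Summing the three contributions yields exactly the stated bound.

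The main technical obstacle will be in the third step: one must verify that the $\avz$'s admissible for applying partial adjointness (namely in $\yN_{\rm B}\cap\asD^{\boldsymbol \lrcorner}$) are sufficient to recover the full inf-sup $\alpha$ defined with test elements ranging over all of $\R(\aoT,\yN_{\rm B})$. The structural splitting $\asD=\uyN\oplus(\asD\cap\yN_{\rm B})$ intrinsic to a base operator pair via Definition~\ref{def:basepair}, combined with the extra orthogonality $\oT\vw\perp\R(\oT,\uxM)$ manufactured in the first step, is exactly what makes the $\uyN$-component of any $\avz\in\yN_{\rm B}$ contribute zero under the partial adjoint pairing; confirming this carefully is what closes the argument.
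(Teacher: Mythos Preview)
The paper does not supply its own proof of this theorem; it is quoted from the cited reference and the details are explicitly ``omitted and referred to \cite{Zhang.S2022padao-arxiv}''. So there is no in-paper argument to compare against, and your proposal has to be judged on its own.

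Your Steps 1 and 2 are sound. Subtracting $\vv_0\in\uxM^{\boldsymbol\lrcorner}$ with $\oT\vv_0$ the projection of $\oT\vv$ onto $\R(\oT,\uxM)$ is the right move, and it does force $\oT\vw\perp\R(\oT,\uxM)$; combined with $\vw^\perp\perp\N(\oT,\txM)$ this even gives $\vw\in\sD\cap\xM_{\rm B}$, which is a useful fact you did not state explicitly. The identification $\vw^k\in\N(\oT,\xM_{\rm B})$ is correct.

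The gap is in Step 3, and your own diagnosis of it is accurate while your proposed resolution is not. To invoke $\alpha$ you must bound $\langle\vw^k,\avw\rangle$ for \emph{every} $\avw\in\R(\aoT,\yN_{\rm B})$, but your estimate is only available for $\avw=\aoT\avz$ with $\avz\in\asD$ (that is where both partial adjointness and the bound $\|\avz\|\le\icr(\aoT,\asD)\|\avw\|$ apply). Nothing in the hypotheses guarantees $\R(\aoT,\yN_{\rm B})\subset\R(\aoT,\asD)$; indeed the extreme admissible choice $\asD=\uyN$ gives $\asD\cap\yN_{\rm B}=\{0\}$. Your closing sentence speaks of ``the $\uyN$-component of any $\avz\in\yN_{\rm B}$'', but by the direct sum $\tyN=\uyN\oplus\yN_{\rm B}$ that component is zero, so the statement is vacuous and does not bridge the gap. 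What is actually needed is either an argument that testing against $\R(\aoT,\asD\cap\yN_{\rm B})$ already suffices for $\vw^k$ lying in the specific subspace $\N(\oT,\xM_{\rm B})\ominus\N(\oT,\sD\cap\xM_{\rm B})$ (using that $\N(\oT,\sD)=\N(\oT,\txM)\cap\R(\aoT,\asD)^\perp$), or an appeal to further structural lemmas from \cite{Zhang.S2022padao-arxiv} about how $\sD\cap\xM_{\rm B}$ and $\asD\cap\yN_{\rm B}$ sit as mutual $B$-annihilators inside the twisted part. Either way, the argument as written stops short of closing.
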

\begin{remark}\label{rem:chpi}
In the context of finite-dimensional case, $\icr(\oT,\sD)$ and $\icr(\aoT,\asD)$ are always finite. If, for example, $\uxM=\{0\}$, then $\icr(\oT,\uxM)=0$; if further $\mathcal{N}(\oT,\xM_{\rm B})= \mathcal{R}(\aoT,\yN_{\rm B})$ thus $\alpha=1$, it holds that $\icr(\oT,\sD)\leqslant 2\icr(\oT,\txM)+\icr(\aoT,\asD)$.
\end{remark}

\section{Finite element spaces and adjoint properties}
\label{sec:fes}

On the subdivision $\mathcal{T}_h$, define 
\begin{equation}\label{eq:deffems}
\mSigma^{\rm KS'}_h:=\Big\{\mtau_h\in\mSigma^{\rm m+}(\mathcal{T}_h):(\dv\mtau_h,\vv_h)+(\mtau_h,\bve_h(\vv_h))=0,\ \forall\,\vv_h\in \vV^{\rm KS}_{h0}\Big\},
\end{equation}
\begin{equation}
\mSigma^{\rm rKS'}_h:=\{\mtau_h\in \mSigma^{\rm m}(\mathcal{T}_h):(\dv\mtau_h,\vv_h)+(\mtau_h,\bve_h(\vv_h))=0,\ \forall\,\vv_h\in \vV^{\rm KS}_{h0}\},
\end{equation}
and
$$
\vV^{\rm rKS}_{h0}:=\{\vv_h\in \vV^{\bve,\rm m}(\mathcal{T}_h):(\dv\mtau_h,\vv_h)+(\mtau_h,\bve_h(\vv_h))=0,\ \forall\,\vv_h\in \mSigma^{\rm rKS'}_h\}.
$$
Namely, $\mSigma^{\rm KS'}_h$ satisfies some adjoint relation accompanied with $\vV^{\rm KS}_{h0}$, $\mSigma^{\rm rKS'}_h$ is a reduced space of $\mSigma^{\rm KS'}_h$ in the degree of polynomials, and $\vV^{\rm rKS}_{h0}$ is reduced in the degree of polynomials and relaxed in the regularity from $\vV^{\rm KS}_{h0}$. In this section, we construct their associated Poincar\'e inequalities by the theory of partially adjoint operators. 

\subsection{Base operator pairs for discretization}

\begin{lemma}
The pair $\Big[(\dv_h,\mSigma^{\rm m+}(\mathcal{T}_h)):\mathbb{L}^2(\Omega,\mathbb{S})\to \vL^2(\Omega), (\bve_h,\vP_1(\mathcal{T}_h)):\vL^2(\Omega)\to \mathbb{L}^2(\Omega,\mathbb{S})\Big]$ is a base operator pair. 
\end{lemma}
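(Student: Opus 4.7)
Since $\mSigma^{\rm m+}(\mathcal{T}_h)$ and $\vP_1(\mathcal{T}_h)$ are finite-dimensional, all four range subspaces in item (1) of Definition \ref{def:basepair} are automatically closed, so item (1) holds. Verifying item (2) then reduces to matching dimensions of the two pairs of vector spaces.

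The key tool is the bilinear pairing
\begin{equation*}
B(\mtau_h,\vv_h) := (\mtau_h,\bve_h(\vv_h)) + (\dv_h\mtau_h,\vv_h)
\end{equation*}
on $\mSigma^{\rm m+}(\mathcal{T}_h)\times\vP_1(\mathcal{T}_h)$, which by cell-wise integration by parts reduces to the sum of cell-boundary traces. By \eqref{eq:uxm}--\eqref{eq:uyn} (with the sign convention of the theory), $\uxM$ and $\uyN$ are precisely the left and right null spaces of $B$; since both ambient spaces have global dimension $6|\mathcal{T}_h|$, the common rank of $B$ gives $\dim\uxM=\dim\uyN$. An orthogonal decomposition of $\txM$ together with rank-nullity applied to \eqref{eq:xmb} yields
\begin{align*}
\dim\N(\dv_h,\xM_{\rm B}) &= \dim\N(\dv_h,\mSigma^{\rm m+}(\mathcal{T}_h)) - \dim\N(\dv_h,\uxM),\\
\dim\R(\bve_h,\yN_{\rm B}) &= \dim\R(\bve_h,\vP_1(\mathcal{T}_h)) - \dim\R(\bve_h,\uyN),
\end{align*}
and analogous formulas for $\N(\bve_h,\yN_{\rm B})$ and $\R(\dv_h,\xM_{\rm B})$ from \eqref{eq:ynb}. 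A cell-wise count gives $\dim\N(\dv_h,\mSigma^{\rm m+}(\mathcal{T}_h))=\dim\R(\bve_h,\vP_1(\mathcal{T}_h))=3|\mathcal{T}_h|$ (both equal the piecewise constant symmetric tensors), and symmetrically $\dim\N(\bve_h,\vP_1(\mathcal{T}_h))=\dim\R(\dv_h,\mSigma^{\rm m+}(\mathcal{T}_h))=3|\mathcal{T}_h|$. The two isomorphism claims therefore collapse to the single dimension identity
\begin{equation*}
\dim\N(\dv_h,\uxM) = \dim\R(\bve_h,\uyN),
\end{equation*}
the other following by rank-nullity on $\uxM,\uyN$ together with $\dim\uxM=\dim\uyN$.

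The main obstacle is verifying this residual identity. I would approach it through a cell-local analysis of the restriction $B_T$ of the bilinear pairing to $\mSigma^{\rm m+}|_T\times\vP_1|_T$, matching kernels on one side with images on the other through the duality encoded by $B_T$. The subtlety is that the local null structure of $B_T$ is sensitive to cell geometry — for instance, triangles with vanishing centered cross-moment $\int_T (x-\bar x)(y-\bar y)\,dx$ carry extra rigid-rotation modes in $\uyN|_T$ — so a careful case analysis across triangle types is required. The global identity $\dim\uxM=\dim\uyN$ nevertheless forces the local discrepancies to balance out, yielding the matching dimensions and thereby confirming item (2).
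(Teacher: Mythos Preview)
You have missed the one observation that makes the lemma immediate: the cell-local bilinear form $B_T(\mtau,\vv)=(\dv\mtau,\vv)_T+(\mtau,\bve(\vv))_T$ is a \emph{perfect pairing} between the two six-dimensional spaces $\mSigma^{\rm m+}(T)$ and $\vP_1(T)$, on every triangle $T$. Indeed, if $B_T(\mtau,\vv)=0$ for all $\vv\in\vP_1(T)$, then testing against $\vv\in\vR=\N(\bve,\vP_1(T))$ forces $\dv\mtau=0$ (since $\dv\mtau\in\vR$ and $(\dv\mtau,\vv)_T=0$ for all $\vv\in\vR$), whence $\mtau$ is a constant symmetric tensor; testing against the remaining linear $\vv$ then forces $\mtau=0$. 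Consequently $\uxM=\{0\}$ and $\uyN=\{0\}$, the twisted part equals the full pair $(\mSigma^{\rm m+}(\mathcal{T}_h),\vP_1(\mathcal{T}_h))$, and item~(2) of Definition~\ref{def:basepair} follows at once from the cell-wise identities $\N(\dv,\mSigma^{\rm m+}(T))=\R(\bve,\vP_1(T))$ and $\R(\dv,\mSigma^{\rm m+}(T))=\N(\bve,\vP_1(T))$ that you already counted. This is exactly the route the paper takes.

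Your claimed geometric subtlety is in fact an error. The centered rigid rotation $\vv=(y-\bar y,\,-(x-\bar x))^\top$ is never in the right null space of $B_T$: pairing it against the quadratic shape function $\mtau$ with $\dv\mtau=(-2y,2x)^\top$ gives
\[
B_T(\mtau,\vv)=(\dv\mtau,\vv)_T=-2\int_T\bigl[(x-\bar x)^2+(y-\bar y)^2\bigr]\,dx<0,
\]
independently of the cross-moment $\int_T(x-\bar x)(y-\bar y)\,dx$. So no case analysis is needed, and the ``residual identity'' you isolate is vacuous because both sides are zero. Your dimension-counting scaffolding is not wrong in spirit, but it obscures the decisive nondegeneracy fact and leads you to anticipate difficulties that do not exist.
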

\begin{proof}
Evidently,
$$
\N(\dv,\mSigma^{\rm m+}(T))={\rm span}\left\{\left(\begin{array}{cc}1&0\\ 0 & 0\end{array}\right),\left(\begin{array}{cc}0&1\\ 1 & 0\end{array}\right),\left(\begin{array}{cc}0&0\\ 0 & 1\end{array}\right)\right\}=\R(\bve,\vP_1(T)),
$$
and 
$$
\R(\dv,\mSigma^{\rm m+}(T))={\rm span}\left\{\left(\begin{array}{c}1 \\ 0\end{array}\right),\left(\begin{array}{c}0 \\ 1\end{array}\right),\left(\begin{array}{c}y \\ -x\end{array}\right)\right\}=\N(\bve,\vP_1(T)).
$$
Now, given $\mtau\in \mSigma^{\rm m+}(T)$, $\mtau=0$ if and only if 
$$
(\dv\mtau,\vv)_T+(\mtau,\varepsilon(\vv))_T=0,\ \forall\,\vv\in \vP_1(T). 
$$
Therefore, given $\mtau_h\in \mSigma^{\rm m+}(\mathcal{T}_h)$, $\mtau_h=0$ if and only if 
$$
(\dv\mtau_h,\vv_h)+(\mtau_h,\bve_h(\vv_h))=0,\ \forall\,\vv_h\in \vP_1(\mathcal{T}_h). 
$$

Similarly, given $\vv_h\in \vP_1(\mathcal{T}_h)$, $\vv_h=0$ if and only if
$$
(\dv\mtau_h,\vv_h)+(\mtau_h,\bve_h(\vv_h))=0,\ \forall\,\mtau_h\in \mSigma^{\rm m+}(\mathcal{T}_h). 
$$
Therefore, the twisted part of $(\mSigma^{\rm m+}(\mathcal{T}_h),\vP_1(\mathcal{T}_h))$ is the pair itself. 

It is easy to obtain:
\begin{equation}\label{eq:ndivm+=rbvep1}
\N(\dv_h,\mSigma^{\rm m+}(\mathcal{T}_h))=\prod_{T\in\mathcal{T}_h}\N(\dv,\mSigma^{\rm m+}(T))=\prod_{T\in\mathcal{T}_h}\R(\bve,\vP_1(T))=\R(\bve_h,\vP_1(\mathcal{T}_h)),
\end{equation}
and similarly
\begin{equation}\label{eq:rdivm+=nbvep1}
\R(\dv_h,\mSigma^{\rm m+}(\mathcal{T}_h))=\N(\bve_h,\vP_1(\mathcal{T}_h)).
\end{equation}

Therefore $\Big[(\dv_h,\mSigma^{\rm m+}(\mathcal{T}_h)):\mathbb{L}^2(\Omega,\mathbb{S})\to \vL^2(\Omega), (\bve_h,\vP_1(\mathcal{T}_h)):\vL^2(\Omega)\to \mathbb{L}^2(\Omega,\mathbb{S})\Big]$ is a base operator pair by Definition \ref{def:basepair}. The proof is completed. 
\end{proof}

\begin{remark}
$\dim(\mSigma^{\rm KS'}_h)=\dim(\mSigma^{\rm m+}(\mathcal{T}_h))-\dim(\vV^{\rm KS}_{h0})=6\#(\mathcal{T}_h)-\#(\mathcal{E}_h^i)-\#(\mathcal{N}_h^i)=8\#(\mathcal{N}_h)+o(\#(\mathcal{N}_h))$.
\end{remark}

\begin{lemma}\label{lem:pidivpa}
For $\left[(\dv_h,\sD_h),(\bve_h,\asD_h)\right]$ partially adjoint based on $\Big[(\dv_h,\mSigma^{\rm m+}(\mathcal{T}_h)), (\bve_h,\vP_1(\mathcal{T}_h))\Big]$, 
\begin{equation}
\icr(\dv_h,\sD_h)\leqslant 2\cdot\icr(\dv_h,\mSigma^{\rm m+}(\mathcal{T}_h))+\icr(\bve_h,\asD_h).
\end{equation}
\end{lemma}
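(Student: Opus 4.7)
The plan is to recognize Lemma \ref{lem:pidivpa} as a direct application of the quantified closed range theorem (Theorem \ref{thm:chpi}), specifically in the simplifying regime described in Remark \ref{rem:chpi}. The two ingredients I will need are (i) that the underlying base operator pair has trivial $\uxM$, so that $\icr(\dv_h,\uxM)=0$, and (ii) that the inf-sup constant $\alpha$ associated to this base operator pair equals $1$, so that the factor $(1+\alpha^{-1})$ becomes exactly $2$ and the multiplier of $\icr(\bve_h,\asD_h)$ becomes exactly $1$.

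First, I will extract from the proof of the preceding lemma that $\uxM=\{0\}$. Indeed, that proof shows that for any $\mtau_h\in \mSigma^{\rm m+}(\mathcal{T}_h)$, the vanishing of $(\dv_h\mtau_h,\vv_h)+(\mtau_h,\bve_h(\vv_h))$ for all $\vv_h\in \vP_1(\mathcal{T}_h)$ forces $\mtau_h=0$. By the definition \eqref{eq:uxm}, this is precisely the statement $\uxM=\{0\}$, and hence $\icr(\dv_h,\uxM)=0$.

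Next, I will verify $\alpha=1$ using \eqref{eq:alpha} together with the explicit identification in \eqref{eq:ndivm+=rbvep1}. Since $\uxM=\{0\}$ and $\uyN=\{0\}$, the twisted part coincides with the base pair, so $\xM_{\rm B}=\mSigma^{\rm m+}(\mathcal{T}_h)$ and $\yN_{\rm B}=\vP_1(\mathcal{T}_h)$. Then
$$
\N(\dv_h,\xM_{\rm B})=\N(\dv_h,\mSigma^{\rm m+}(\mathcal{T}_h))=\R(\bve_h,\vP_1(\mathcal{T}_h))=\R(\bve_h,\yN_{\rm B}),
$$
where the middle equality is exactly \eqref{eq:ndivm+=rbvep1}. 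Because the two subspaces appearing in the inf-sup \eqref{eq:alpha} are identical, the supremum is achieved by choosing $\vw=\vv$, giving $\alpha=1$.

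Finally, I will invoke Theorem \ref{thm:chpi} directly. Since $(\dv_h,\sD_h),(\bve_h,\asD_h)$ is a partially adjoint pair based on the established base operator pair, and since everything sits in finite dimensions so that $\icr(\bve_h,\asD_h)<\infty$ automatically, the first inequality of Theorem \ref{thm:chpi} applies and reads
$$
\icr(\dv_h,\sD_h)\leqslant (1+\alpha^{-1})\,\icr(\dv_h,\mSigma^{\rm m+}(\mathcal{T}_h))+\alpha^{-1}\,\icr(\bve_h,\asD_h)+\icr(\dv_h,\uxM).
$$
Plugging in $\alpha=1$ and $\icr(\dv_h,\uxM)=0$ yields the stated bound. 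There is essentially no obstacle here; the only thing to be careful about is bookkeeping the identifications $\uxM=\{0\}$ and $\alpha=1$, both of which are immediate consequences of computations already carried out in the preceding lemma.
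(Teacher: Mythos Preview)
Your proposal is correct and follows exactly the paper's approach: apply Theorem \ref{thm:chpi} with $(\oT,\txM)=(\dv_h,\mSigma^{\rm m+}(\mathcal{T}_h))$, $(\aoT,\tyN)=(\bve_h,\vP_1(\mathcal{T}_h))$, noting $\uxM=\{0\}$ and $\alpha=1$ from the computations in the preceding lemma (cf.\ Remark \ref{rem:chpi}). Your write-up simply spells out in more detail what the paper compresses into a single sentence.
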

\begin{proof}
The lemma can be proved by noting $(\oT,\txM)=(\dv_h,\mSigma^{\rm m+}(\mathcal{T}_h))$, $(\aoT,\tyN)=(\bve_h,\vP_1(\mathcal{T}_h))$ and $\uxM=\{0\}$ in Theorem \ref{thm:chpi} with $\alpha=1$; see also Remark \ref{rem:chpi}. 
\end{proof}

\begin{lemma}\label{lem:baseicr}
There is a constant $C$ depending on the shape regularity of $\mathcal{G}_h$ only, such that 
$$
\icr(\dv_h,\mSigma^{\rm m+}(\mathcal{T}_h))\leqslant C.
$$
If further $\mathcal{G}_h$ is quasi-uniform,  
$$
\icr(\dv_h,\mSigma^{\rm m+}(\mathcal{T}_h))\leqslant Ch.
$$
\end{lemma}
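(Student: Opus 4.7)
The plan is to evaluate $\icr(\dv_h,\mSigma^{\rm m+}(\mathcal{T}_h))$ directly from its definition and reduce it to a cell-wise scaling estimate. By definition, $\icr$ is the supremum of $\|\mtau_h\|_{0,\Omega}/\|\dv_h\mtau_h\|_{0,\Omega}$ over nonzero $\mtau_h$ in the $\mathbb{L}^2$-orthogonal complement of $\N(\dv_h,\mSigma^{\rm m+}(\mathcal{T}_h))$ inside $\mSigma^{\rm m+}(\mathcal{T}_h)$. By \eqref{eq:ndivm+=rbvep1} this kernel decomposes cell-wise, and on each $T$ it equals the three-dimensional space of constant symmetric tensors. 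Hence $\mtau_h$ lies in the orthogonal complement if and only if $\int_T\mtau_h\,dx=0$ for every $T\in\mathcal{T}_h$, so it suffices to establish the cell-wise bound
\[
\|\mtau\|_{0,T}\leqslant C\,h_T\,\|\dv\mtau\|_{0,T},\qquad \forall\,\mtau\in\mSigma^{\rm m+}(T)\ \text{with}\ \textstyle\int_T\mtau\,dx=0,
\]
with $C$ depending only on shape regularity.

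For the local estimate I would use the isotropic scaling $\tilde x=(x-c_T)/h_T$ sending $T$ to a unit-diameter triangle $\tilde T$ of the same shape, where $c_T$ is the barycenter of $T$. A direct expansion of each basis element of $\mSigma^{\rm m+}$ (all non-constant entries being $x$, $y$, or $x^2-y^2$ attached to the matrix $\left(\begin{smallmatrix}0&1\\1&0\end{smallmatrix}\right)$) shows that this scaling maps $\mSigma^{\rm m+}(T)$ bijectively onto $\mSigma^{\rm m+}(\tilde T)$; in particular, $x^2-y^2$ expands as $h_T^2(\tilde x^2-\tilde y^2)$ plus lower-degree polynomials already present. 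A standard change of variables then gives $\|\mtau\|_{0,T}^2=h_T^2\|\tilde\mtau\|_{0,\tilde T}^2$ and $\|\dv\mtau\|_{0,T}^2=\|\tilde\dv\tilde\mtau\|_{0,\tilde T}^2$, so the local estimate reduces to a norm equivalence $\|\tilde\mtau\|_{0,\tilde T}\leqslant \tilde C(\tilde T)\|\tilde\dv\tilde\mtau\|_{0,\tilde T}$ on the zero-mean subspace of $\mSigma^{\rm m+}(\tilde T)$. This holds because $\tilde\dv$ is injective on that finite-dimensional subspace (any divergence-free element is a constant symmetric tensor, and a zero-mean constant must vanish), and $\tilde C(\tilde T)$ depends continuously on the shape of $\tilde T$; shape regularity confines $\tilde T$ to a compact family, hence $\sup_{\tilde T}\tilde C(\tilde T)\leqslant C$.

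Summing the local estimates gives $\|\mtau_h\|_{0,\Omega}^2\leqslant C^2\sum_T h_T^2\|\dv\mtau_h\|_{0,T}^2\leqslant C^2 h_{\max}^2\|\dv_h\mtau_h\|_{0,\Omega}^2$. Since $h_{\max}\leqslant\mathrm{diam}(\Omega)$ is finite, the first bound $\icr\leqslant C$ follows immediately, and under quasi-uniformity $h_{\max}\leqslant Ch$, yielding the sharper $\icr\leqslant Ch$. The main subtlety to watch is that a generic affine pullback to a single reference triangle would not preserve $\mSigma^{\rm m+}$, because an off-diagonal affine change of variables would mix $x^2-y^2$ with $xy$; the fix is to use only isotropic dilation with translation, which does preserve the space, and to invoke compactness of the set of admissible unit-size cells to extract a uniform constant.
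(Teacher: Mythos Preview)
Your proposal is correct and follows essentially the same route as the paper: reduce to the cell-wise estimate via the product decomposition of the kernel, then obtain $\|\mtau\|_{0,T}\leqslant Ch_T\|\dv\mtau\|_{0,T}$ on the zero-mean subspace of $\mSigma^{\rm m+}(T)$. The paper dismisses this last step as ``elementary calculation''; your isotropic-dilation-plus-compactness argument is a clean way to carry it out, and your remark that a general affine pullback would not preserve $\mSigma^{\rm m+}$ (because $x^2-y^2$ would acquire an $xy$ component) is a valid caution that justifies the choice of scaling.
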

\begin{proof}
For $\mSigma^{\rm m+}(T)$ on a cell $T$,  $\N(\dv,\mSigma^{\rm m+}(T))={\rm span}\left\{\left(\begin{array}{cc}1&0\\ 0 & 0\end{array}\right),\left(\begin{array}{cc}0&1\\ 1 & 0\end{array}\right),\left(\begin{array}{cc}0&0\\ 0 & 1\end{array}\right)\right\}$. Therefore, if $\mtau\in \mSigma^{\rm m+}(T)$ and $\mtau$ is orthogonal to $\N(\dv,\mSigma^{\rm m+}(T))$, namely $\mtau\in \mSigma^{\rm m+}(T)^\lrcorner$, by elementary calculation, $\|\mtau\|_{0,T}\leqslant Ch_T\|\dv\mtau\|_{0,T}$. Namely $\icr(\dv,\mSigma^{\rm m+}(T))\leqslant Ch_T$. 

Noting that $\displaystyle\N(\dv_h,\mSigma^{\rm m+}(\mathcal{T}_h))=\prod_{T\in\mathcal{T}_h}\N(\dv,\mSigma^{\rm m+}(T))$, and $\displaystyle\mSigma^{\rm m+}(\mathcal{T}_h)^\lrcorner=\prod_{T\in\mathcal{T}_h}\mSigma^{\rm m+}(T)^\lrcorner$, we can show $\displaystyle\icr(\dv_h,\mSigma^{\rm m+}(\mathcal{T}_h))\leqslant \max_{T\in\mathcal{T}_h}\icr(\dv,\mSigma^{\rm m+}(T))$. The assertions follow by definition. 
\end{proof}

Evidently, 
\begin{equation}\label{eq:rker}
\N(\dv,\mSigma^{\rm m}(T))={\rm span}\left\{\left(\begin{array}{cc}1&0\\ 0 & 0\end{array}\right),\left(\begin{array}{cc}0&1\\ 1 & 0\end{array}\right),\left(\begin{array}{cc}0&0\\ 0 & 1\end{array}\right)\right\}=\N(\bve,\vV^{\rm m}(T)),
\end{equation} 
and 
\begin{equation}\label{eq:kerr}
\R(\dv,\mSigma^{\rm m}(T))=\vP_0(T)=\N(\bve,\vV^{\rm m}(T)).
\end{equation} 
Therefore, the lemma below follows easily.
\begin{lemma}
\begin{enumerate}
\item $[(\dv_h,\mSigma^{\rm m}(\mathcal{T}_h)), (\bve_h,\vV^{\rm m}(\mathcal{T}_h))]$ is a base operator pair;
\item for $\left[(\dv_h,\sD_h),(\bve_h,\asD_h)\right]$ partially adjoint based on $\Big[(\dv_h,\mSigma^{\rm m}(\mathcal{T}_h)), (\bve_h,\vV^{\bve,\rm m}(\mathcal{T}_h))\Big]$,  
$$%\begin{equation}
\icr(\bve_h,\asD_h)\leqslant 2\cdot\icr(\bve_h,\vV^{\bve,\rm m}(\mathcal{T}_h))+\icr(\dv_h,\sD_h);
$$%\end{equation}
\item there is a constant $C$ depending on the shape regularity of $\mathcal{G}_h$ only, such that 
$$
\icr(\bve_h,\vV^{\bve,\rm m}(\mathcal{T}_h))\leqslant C;
$$
if further $\mathcal{G}_h$ is quasi-uniform,  
$$
\icr(\bve_h,\vV^{\bve,\rm m}(\mathcal{T}_h))\leqslant Ch.
$$
\end{enumerate}
\end{lemma}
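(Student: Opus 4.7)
The plan is to mirror the three-step argument carried out for the pair $\Big[(\dv_h,\mSigma^{\rm m+}(\mathcal{T}_h)), (\bve_h,\vP_1(\mathcal{T}_h))\Big]$ above, now with $\mSigma^{\rm m+}$ replaced by its reduction $\mSigma^{\rm m}$ and $\vP_1$ replaced by $\vV^{\bve,\rm m}$. Everything is finite-dimensional, so closed-range conditions are automatic, and the three assertions reduce to (i) verifying the kernel-range identifications of Definition \ref{def:basepair}, (ii) a direct application of Theorem \ref{thm:chpi} with swapped roles, and (iii) a cell-wise scaling estimate.

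For part (1), the local identities \eqref{eq:rker}--\eqref{eq:kerr}, supplemented by the direct computation showing that $\bve$ maps $\vV^{\bve,\rm m}(T)$ onto the three-dimensional space of constant symmetric tensors with kernel $\vP_0(T)$, patch to $\N(\dv_h,\mSigma^{\rm m}(\mathcal{T}_h))=\R(\bve_h,\vV^{\bve,\rm m}(\mathcal{T}_h))$ and $\R(\dv_h,\mSigma^{\rm m}(\mathcal{T}_h))=\N(\bve_h,\vV^{\bve,\rm m}(\mathcal{T}_h))$, just as in \eqref{eq:ndivm+=rbvep1}--\eqref{eq:rdivm+=nbvep1}. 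The nondegeneracy of the local pairing $(\dv\mtau,\vv)_T+(\mtau,\bve(\vv))_T$ on the $5$-dimensional spaces $\mSigma^{\rm m}(T)$ and $\vV^{\bve,\rm m}(T)$, checked by a direct calculation on the chosen bases exactly as in the $\mSigma^{\rm m+}$--$\vP_1$ case, shows that the twisted part of $(\mSigma^{\rm m}(\mathcal{T}_h),\vV^{\bve,\rm m}(\mathcal{T}_h))$ is the pair itself, so $\uxM=\uyN=\{0\}$ and Definition \ref{def:basepair} is satisfied. Part (2) is then an immediate instance of Theorem \ref{thm:chpi} with the roles interchanged, taking $(\oT,\txM)=(\bve_h,\vV^{\bve,\rm m}(\mathcal{T}_h))$ and $(\aoT,\tyN)=(\dv_h,\mSigma^{\rm m}(\mathcal{T}_h))$: since $\uxM=\uyN=\{0\}$, convention \eqref{eq:alphabeta=1} gives $\alpha=\beta=1$, and the specialization of \eqref{eq:icrtstardstar} is precisely the stated bound.

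For part (3), I would argue cell by cell as in Lemma \ref{lem:baseicr}. By \eqref{eq:kerr} the $\bve$-kernel on $\vV^{\bve,\rm m}(T)$ is exactly $\vP_0(T)$, so $\|\bve(\cdot)\|_{0,T}$ is a genuine norm on the orthogonal complement $\vV^{\bve,\rm m}(T)^{\boldsymbol{\lrcorner}}$; equivalence of norms on the reference element together with a scaling argument yields $\|\vv\|_{0,T}\leqslant Ch_T\|\bve(\vv)\|_{0,T}$ there, with $C$ depending only on the shape regularity. Since both kernels and orthogonal complements split cell by cell, one obtains $\icr(\bve_h,\vV^{\bve,\rm m}(\mathcal{T}_h))\leqslant \max_{T\in\mathcal{T}_h}\icr(\bve,\vV^{\bve,\rm m}(T))$, which gives the $C$-bound under shape regularity and the $Ch$-bound under quasi-uniformity. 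The main subtlety is exactly the injectivity modulo constants used in part (3): because $\vV^{\bve,\rm m}(T)$ deliberately excludes the rigid rotation $(y,-x)^\top$, the $\bve$-kernel collapses from the full rigid-body space $\vR(T)$ down to $\vP_0(T)$, which is what allows the scaling to produce an $h_T$ factor and is the feature that singles out $\vV^{\bve,\rm m}$ as a lowest-degree shape space for the norm $\|\cdot\|_0+\|\bve(\cdot)\|_0$.
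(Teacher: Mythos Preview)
Your argument is correct and is exactly the route the paper intends: it simply records \eqref{eq:rker}--\eqref{eq:kerr} and declares that the lemma ``follows easily'', meaning by repeating verbatim the proofs of the three preceding lemmas for the $\mSigma^{\rm m+}$--$\vP_1$ pair. Two minor bookkeeping slips are worth fixing: in part~(2), once you swap roles to $(\oT,\txM)=(\bve_h,\vV^{\bve,\rm m}(\mathcal{T}_h))$ you must apply the \emph{first} inequality of Theorem~\ref{thm:chpi}, not \eqref{eq:icrtstardstar} (or equivalently keep the original roles and apply \eqref{eq:icrtstardstar} directly with $\beta=1$, $\uyN=\{0\}$); and $\alpha=\beta=1$ does not come from the convention \eqref{eq:alphabeta=1}, since the relevant spaces $\N(\bve_h,\vV^{\bve,\rm m}(\mathcal{T}_h))=\R(\dv_h,\mSigma^{\rm m}(\mathcal{T}_h))=\vP_0(\mathcal{T}_h)$ are nontrivial, but rather from their coincidence, exactly as in Remark~\ref{rem:chpi}.
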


\subsection{Poincar\'e inequality for $\mSigma^{\rm KS'}_h$}

By \eqref{eq:ndivm+=rbvep1} and \eqref{eq:rdivm+=nbvep1}, it follows that
\begin{equation}\label{eq:femsdual}
\vV^{\rm KS}_{h0}=\Big\{\vv_h\in\vP_1(\mathcal{T}_h):(\dv\mtau_h,\vv_h)+(\mtau_h,\bve_h(\vv_h))=0,\ \forall\,\mtau_h\in \mSigma^{\rm KS'}_h\Big\}.
\end{equation}
Therefore, by Definition \ref{def:pao}, the pair $\left[(\dv_h,\mSigma^{\rm KS'}_h),(\bve_h,\vV^{\rm KS}_{h0})\right]$ is {\it partially adjoint} based on the pair $\Big[(\dv_h,\mSigma^{\rm m+}(\mathcal{T}_h)), (\bve_h,\vP_1(\mathcal{T}_h))\Big]$.

\begin{lemma}\label{lem:anotherpi}
There exists a constant $C>0$, such that, for $\mtau_h\in \mSigma_h^{\rm KS'}$ satisfying $\int_\Omega \tr(\mtau_h)dx=0$, we have 
$$
\|\mtau_h\|_{0,\Omega}\leqslant C(\|\mtau_h^D\|_{0,\Omega}+\|\dv\mtau_h\|_{0,\Omega}). 
$$
\end{lemma}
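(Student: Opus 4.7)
The plan is to mimic the continuous argument behind Lemma \ref{lem:anewpi}, with the continuous inf-sup replaced by the Kouhia--Stenberg discrete inf-sup of Lemma \ref{lem:kornks}, and with the missing regularity of $\mtau_h$ compensated for by the defining adjoint relation \eqref{eq:deffems} of $\mSigma^{\rm KS'}_h$. Since $(\msigma,\mtau)=(\msigma^D,\mtau^D)+\tfrac12(\tr\msigma,\tr\mtau)$ on symmetric tensors, we have the orthogonal decomposition $\|\mtau_h\|_{0,\Omega}^2=\|\mtau_h^D\|_{0,\Omega}^2+\tfrac12\|\tr\mtau_h\|_{0,\Omega}^2$, so it suffices to bound $\|\tr\mtau_h\|_{0,\Omega}$ by $\|\mtau_h^D\|_{0,\Omega}+\|\dv_h\mtau_h\|_{0,\Omega}$.

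The key structural observation is that every element of $\mSigma^{\rm m+}$ has constant diagonal entries; hence for any $\mtau_h\in\mSigma^{\rm KS'}_h\subset \mSigma^{\rm m+}(\mathcal{T}_h)$, the scalar field $\tr\mtau_h$ lies in $P_0(\mathcal{T}_h)$, and together with $\int_\Omega\tr(\mtau_h)\,dx=0$ it belongs precisely to the space $P_0(\mathcal{T}_h)\cap L^2_0(\Omega)$ appearing in the inf-sup of Lemma \ref{lem:kornks}. That lemma then provides $\vv_h\in \vV^{\rm KS}_{h0}$ such that
\begin{equation*}
(\dv_h\vv_h,\tr\mtau_h)\geqslant C\|\vv_h\|_{1,h}\|\tr\mtau_h\|_{0,\Omega}.
\end{equation*}

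Next, I would convert the left-hand side into quantities controlled by $\mtau_h^D$ and $\dv_h\mtau_h$. Since $\dv_h\vv_h=\tr(\bve_h(\vv_h))$, applying the same trace/deviator identity to $(\bve_h(\vv_h),\mtau_h)$ yields
\begin{equation*}
\tfrac12(\dv_h\vv_h,\tr\mtau_h)=(\bve_h(\vv_h),\mtau_h)-(\bve_h(\vv_h)^D,\mtau_h^D).
\end{equation*}
The adjoint relation defining $\mSigma^{\rm KS'}_h$ in \eqref{eq:deffems}, applied to the pair $(\mtau_h,\vv_h)$, gives $(\bve_h(\vv_h),\mtau_h)=-(\dv_h\mtau_h,\vv_h)$. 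Combining these,
\begin{equation*}
\tfrac12(\dv_h\vv_h,\tr\mtau_h)=-(\dv_h\mtau_h,\vv_h)-(\bve_h(\vv_h)^D,\mtau_h^D),
\end{equation*}
and Cauchy--Schwarz bounds the right-hand side by $(\|\dv_h\mtau_h\|_{0,\Omega}+\|\mtau_h^D\|_{0,\Omega})\|\vv_h\|_{1,h}$. Dividing by $\|\vv_h\|_{1,h}$ and invoking the inf-sup lower bound concludes the estimate for $\|\tr\mtau_h\|_{0,\Omega}$, and the decomposition then gives the claim.

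The main subtlety, and the reason $\mSigma^{\rm KS'}_h$ and $\vV^{\rm KS}_{h0}$ are the right partners, is the interlocking of three facts: the shape function space $\mSigma^{\rm m+}$ forces $\tr\mtau_h$ to be piecewise constant so the Kouhia--Stenberg inf-sup for $P_0(\mathcal{T}_h)\cap L^2_0(\Omega)$ is directly applicable; the adjoint constraint \eqref{eq:deffems} converts the otherwise uncontrollable cross term $(\bve_h(\vv_h),\mtau_h)$ into $-(\dv_h\mtau_h,\vv_h)$ without any jump/consistency residual; and Korn's inequality \eqref{eq:weakkorn} is implicit in the $\|\vv_h\|_{1,h}$ bound furnished by Lemma \ref{lem:kornks}. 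If any of these three ingredients were absent, a discrete analogue of \eqref{eq:cfnewpi} would not be available for a nonconforming space at this low polynomial degree; this is exactly the principle behind the $\lambda$-robust coercivity of the scheme.
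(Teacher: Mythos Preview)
Your proposal is correct and follows essentially the same route as the paper: observe that $\tr\mtau_h\in P_0(\mathcal{T}_h)\cap L^2_0(\Omega)$, invoke the Kouhia--Stenberg inf-sup of Lemma~\ref{lem:kornks} to produce a test function $\vv_h\in\vV^{\rm KS}_{h0}$, split $(\bve_h(\vv_h),\mtau_h)$ via the trace/deviator identity, and replace $(\bve_h(\vv_h),\mtau_h)$ by $-(\dv_h\mtau_h,\vv_h)$ using the defining adjoint relation \eqref{eq:deffems}. The only cosmetic differences are that the paper uses the surjectivity form of the inf-sup (choosing $\vv_h$ with $\dv_h\vv_h=\tr\mtau_h$) rather than the inequality form, and pairs $\mtau_h^D$ against $(\nabla_h\vv_h)^D$ instead of $(\bve_h(\vv_h))^D$, which coincide since $\mtau_h^D$ is symmetric.
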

\begin{proof}
Note that $\tr(\mtau_h)$ is a piecewise constant function. As $\int_\Omega \tr(\mtau_h)dx=0$, by Lemma \ref{lem:kornks}, there exists a $\vv_h\in \vV^{\rm KS}_{h0}$, such that 
$$
\dv_h\vv_h=\tr(\mtau_h),\ \ \mbox{and}\ \ \|\vv{}_h\|_{1,h}\leqslant C\|\tr(\mtau_h)\|_{0,\Omega}.
$$
Therefore,
\begin{multline*}
\|\tr(\mtau_h)\|_{0,\Omega}^2=(\tr(\mtau_h),\dv\vv_h)=\int_\Omega \mtau_h:\Id\,\dv_h\vv_h=2\int_\Omega \mtau_h:(\nabla_h \vv_h-(\nabla_h\vv_h)^D)
\\
=2\int_\Omega \mtau_h:(\bve_h(\vv_h)-(\nabla_h\vv_h)^D)=-2\int_\Omega \dv\mtau_h\vv_h-2\int_\Omega\mtau_h^D:\nabla_h\vv_h
\\
\leqslant 2(\|\dv_h\mtau_h\|_{0,\Omega}+\|\mtau_h^D\|_{0,\Omega})(\|\vv_h\|_{0,\Omega}+\|\nabla_h\vv_h\|_{0,\Omega}).
\end{multline*}
It follows that $\|\tr(\mtau_h)\|_{0,\Omega}\leqslant C (\|\dv_h\mtau_h\|_{0,\Omega}+\|\mtau_h^D\|_{0,\Omega})$. Therefore,
$$
\|\mtau_h\|_{0,\Omega}\leqslant \|\mtau_h^D\|_{0,\Omega}+\frac{1}{2}\|\tr(\mtau_h)\Id\|_{0,\Omega}\leqslant C (\|\dv_h\mtau_h\|_{0,\Omega}+\|\mtau_h^D\|_{0,\Omega}). 
$$
The proof is completed. 
\end{proof}

\begin{remark}
The adjoint relation \eqref{eq:adisadjoint} is crucial in the proof of Lemma \ref{lem:anotherpi}. 
\end{remark}

Denote 
$$
\mathring{\mSigma}^{\rm KS'}_h:=\N(\dv_h,\mSigma^{\rm KS'}_h),\ \mbox{and}\  \mSigma^{\rm KS',\lrcorner}_h:=\{\mtau_h\in \mSigma^{\rm KS'}_h:(\mathcal{A}\mtau_h,\feta_h)=0,\ \forall\,\feta_h\in \mathring{\mSigma}^{\rm KS'}_h\}.
$$
By Lemma \ref{lem:anotherpi}, if $\mtau_h\in \mathring{\mSigma}^{\rm KS'}_h$ and $\int_\Omega\tr(\mtau_h)=0$, then $\|\mtau_h\|_{0,\Omega}\leqslant C\|\mtau_h\|_{\mathcal{A}}:=C\sqrt{(\mathcal{A}\mtau_h,\mtau_h)}$ uniformly for $0<\lambda<\infty$.

\begin{lemma}\label{lem:pifes}
It holds with a constant $C$ depending on the regularity of $\mathcal{T}_h$ that 
\begin{equation}
\|\mtau_h\|_{0,\Omega}\leqslant C\|\dv_h\mtau_h\|_{0,\Omega},\ \forall\,\mtau_h\in \mSigma^{\rm KS',\lrcorner}_h. 
\end{equation}
\end{lemma}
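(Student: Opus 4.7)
The plan is to decompose $\mtau_h$ into its $L^2$-kernel and $L^2$-range components inside $\mSigma^{\rm KS'}_h$, estimate each piece separately, and combine. The $\icr$-bound of Lemma \ref{lem:pidivpa} handles the range component, while the discrete Poincar\'e inequality of Lemma \ref{lem:anotherpi} handles the kernel component. A preliminary observation bridges the $\mathcal{A}$-orthogonality in the definition of $\mSigma^{\rm KS',\lrcorner}_h$ with the mean-zero-trace hypothesis of Lemma \ref{lem:anotherpi}: the constant tensor $\Id$ lies in $\mathring{\mSigma}^{\rm KS'}_h$. Indeed, $\Id\in\mSigma^{\rm m+}(\mathcal{T}_h)$ with $\dv\Id=0$, and a cellwise divergence theorem exploiting $\vv_h^1\in H^1_0(\Omega)$ together with the weak continuity/boundary conditions on $\vv_h^2$ gives $(\Id,\bve_h(\vv_h))=\int_\Omega\dv_h\vv_h=0$ for all $\vv_h\in\vV^{\rm KS}_{h0}$. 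For $\mtau_h\in\mSigma^{\rm KS',\lrcorner}_h$, this yields $0=(\mathcal{A}\mtau_h,\Id)=\frac{1}{2(\lambda+\mu)}\int_\Omega\tr(\mtau_h)$, hence $\int_\Omega\tr(\mtau_h)=0$.

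I would then write the $L^2$-orthogonal decomposition $\mtau_h=\mtau_h^{(0)}+\mtau_h^{(1)}$ in $\mSigma^{\rm KS'}_h$ with $\mtau_h^{(0)}\in\mathring{\mSigma}^{\rm KS'}_h$, so that $\dv_h\mtau_h=\dv_h\mtau_h^{(1)}$. Applying Lemma \ref{lem:pidivpa} together with Lemma \ref{lem:baseicr} and the bound $\icr(\bve_h,\vV^{\rm KS}_{h0})\leqslant C$, which follows from the weak Korn inequality of Lemma \ref{lem:kornks} plus a standard discrete Poincar\'e inequality on $\vV^{\rm KS}_{h0}$ (noting that $\mathcal{N}(\bve_h,\vV^{\rm KS}_{h0})=\{0\}$ because the KS boundary/continuity data rule out any nonzero piecewise rigid motion), one concludes $\icr(\dv_h,\mSigma^{\rm KS'}_h)\leqslant C$ with $C$ independent of $\lambda$ and $h$, and therefore $\|\mtau_h^{(1)}\|_{0,\Omega}\leqslant C\|\dv_h\mtau_h\|_{0,\Omega}$.

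For $\mtau_h^{(0)}$, the $\mathcal{A}$-orthogonality $(\mathcal{A}\mtau_h,\mtau_h^{(0)})=0$ and Cauchy--Schwarz in the $\mathcal{A}$-inner product give $(\mathcal{A}\mtau_h^{(0)},\mtau_h^{(0)})\leqslant(\mathcal{A}\mtau_h^{(1)},\mtau_h^{(1)})\leqslant\frac{1}{2\mu_0}\|\mtau_h^{(1)}\|_{0,\Omega}^2$. Combining with $(\mathcal{A}\mtau_h^{(0)},\mtau_h^{(0)})\geqslant\frac{1}{2\mu_1}\|(\mtau_h^{(0)})^D\|_{0,\Omega}^2$, I obtain $\|(\mtau_h^{(0)})^D\|_{0,\Omega}\leqslant C\|\mtau_h^{(1)}\|_{0,\Omega}\leqslant C\|\dv_h\mtau_h\|_{0,\Omega}$ with $C$ independent of $\lambda$. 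Since $\Id\in\mathring{\mSigma}^{\rm KS'}_h$ and $\mtau_h^{(1)}$ is $L^2$-orthogonal to $\mathring{\mSigma}^{\rm KS'}_h$, one has $\int_\Omega\tr(\mtau_h^{(0)})=(\mtau_h^{(0)},\Id)=(\mtau_h,\Id)=\int_\Omega\tr(\mtau_h)=0$. Then Lemma \ref{lem:anotherpi} applies to $\mtau_h^{(0)}\in\mSigma^{\rm KS'}_h$ (using $\dv_h\mtau_h^{(0)}=0$) and yields $\|\mtau_h^{(0)}\|_{0,\Omega}\leqslant C\|(\mtau_h^{(0)})^D\|_{0,\Omega}\leqslant C\|\dv_h\mtau_h\|_{0,\Omega}$. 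The $L^2$-orthogonality of the two pieces then closes the estimate: $\|\mtau_h\|_{0,\Omega}^2=\|\mtau_h^{(0)}\|_{0,\Omega}^2+\|\mtau_h^{(1)}\|_{0,\Omega}^2\leqslant C\|\dv_h\mtau_h\|_{0,\Omega}^2$.

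The main obstacle is reconciling the $L^2$-orthogonality underlying the $\icr$ machinery with the $\mathcal{A}$-orthogonality defining $\mSigma^{\rm KS',\lrcorner}_h$, while keeping every constant uniform in $\lambda\in(0,\infty)$; a direct coercivity estimate purely in $\mathcal{A}$-norms degenerates as $\lambda\to\infty$ because the trace block of $\mathcal{A}$ scales like $(\lambda+\mu)^{-1}$. Exploiting $\Id\in\mathring{\mSigma}^{\rm KS'}_h$ is what circumvents this: it forces $\int_\Omega\tr(\mtau_h^{(0)})=0$, effectively removing the $\lambda$-degenerate direction and enabling Lemma \ref{lem:anotherpi} to close the kernel estimate.
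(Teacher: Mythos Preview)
Your proof is correct and follows essentially the same route as the paper: decompose $\mtau_h$ as the $L^2$-orthogonal sum of a kernel part in $\mathring{\mSigma}^{\rm KS'}_h$ and a complement part in $\widetilde\mSigma^{\rm KS',\lrcorner}_h$, bound the latter by the $\icr$ estimate (Lemmas \ref{lem:pidivpa}, \ref{lem:baseicr}, \ref{lem:kornks}), and bound the former via the $\mathcal{A}$-orthogonality together with Lemma \ref{lem:anotherpi}. The paper phrases the kernel bound as $\|\mtau_h'-\mtau_h\|_0\leqslant C\|\mtau_h'-\mtau_h\|_{\mathcal{A}}\leqslant\|\mtau_h'\|_{\mathcal{A}}\leqslant C\|\mtau_h'\|_0$ rather than isolating the deviatoric part first, and does not spell out that $\Id\in\mathring{\mSigma}^{\rm KS'}_h$ (which you verify explicitly), but the argument is otherwise the same.
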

\begin{proof}
Denote $\widetilde\mSigma^{\rm KS',\lrcorner}_h:=\{\mtau_h\in \mSigma^{\rm KS'}_h:(\mtau_h,\feta_h)=0,\ \forall\,\feta_h\in \mathring{\mSigma}^{\rm KS'}_h\}$. By Lemma \ref{lem:kornks}, $\icr(\bve_h,\vV^{\rm KS}_{h0})\leqslant C$. Then by Lemmas \ref{lem:baseicr} and \ref{lem:pidivpa}, $\icr(\dv_h,\mSigma_h^{\rm KS'}) \leqslant C$, namely $\|\mtau_h\|_{0,\Omega}\leqslant C\|\dv_h\mtau_h\|_{0,\Omega}$, for $\mtau_h\in \widetilde\mSigma^{\rm KS',\lrcorner}_h$. 

Now, given $\mtau_h\in \mSigma^{\rm KS',\lrcorner}_h$, there is a $\mtau_h'\in \widetilde\mSigma^{\rm KS',\lrcorner}_h$, such that $\dv_h\mtau_h=\dv_h\mtau_h'$, and $(\mathcal{A}\mtau_h',\meta_h)=(\mathcal{A}(\mtau_h'-\mtau_h),\meta_h)$ for any $\meta_h\in\mathring{\mSigma}^{\rm KS'}_h$. As $\mtau_h'-\mtau_h\in \mathring{\mSigma}^{\rm KS'}_h$ and $\int_\Omega \tr(\mtau_h'-\mtau_h)=0$, 
$$
\|\mtau_h'-\mtau_h\|_{0,\Omega}\leqslant C\|\mtau_h'-\mtau_h\|_{\mathcal{A}}\leqslant \|\mtau_h'\|_{\mathcal{A}}\leqslant C\|\mtau_h'\|_{0,\Omega}.
$$
Therefore, $\|\mtau_h\|_{0,\Omega}\leqslant C\|\mtau_h'\|_{0,\Omega}\leqslant C\|\dv_h\mtau_h'\|_{0,\Omega}= C\|\dv_h\mtau_h\|_{0,\Omega}$. The proof is completed.  
\end{proof}

\subsection{Poincar\'e inequality for reduced finite element spaces}

\begin{lemma}
$\mSigma^{\rm rKS'}_h=\{\mtau_h\in \mSigma^{\rm KS'}_h:\dv_h\mtau_h\in\vP_0(\mathcal{T}_h)\}$.
\end{lemma}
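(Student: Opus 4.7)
The plan is to prove the identity by double inclusion, using the fact that the two spaces $\mSigma^{\rm rKS'}_h$ and $\mSigma^{\rm KS'}_h$ are defined by the \emph{same} adjoint relation against $\vV^{\rm KS}_{h0}$, differing only in the local shape function space ($\mSigma^{\rm m}$ versus $\mSigma^{\rm m+}$). Consequently, the whole content of the lemma reduces to an elementwise algebraic fact about how $\dv$ acts on $\mSigma^{\rm m+}$ and how the requirement $\dv_h\mtau_h\in\vP_0(\mathcal{T}_h)$ forces the ``extra'' shape direction to vanish.

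For the inclusion $\mSigma^{\rm rKS'}_h\subseteq\{\mtau_h\in \mSigma^{\rm KS'}_h:\dv_h\mtau_h\in\vP_0(\mathcal{T}_h)\}$, I would first observe that $\mSigma^{\rm m}(T)\subset\mSigma^{\rm m+}(T)$ for every $T$, so any $\mtau_h\in\mSigma^{\rm rKS'}_h$ already lies in $\mSigma^{\rm m+}(\mathcal{T}_h)$ and satisfies the defining adjoint relation, hence $\mtau_h\in\mSigma^{\rm KS'}_h$. Then a direct row-wise computation on the five generators of $\mSigma^{\rm m}(T)$ shows that $\dv$ maps $\mSigma^{\rm m}(T)$ into $\vP_0(T)$ (the constant shape matrices are divergence-free, while $\dv\begin{pmatrix}0&x\\x&0\end{pmatrix}=(0,1)^\top$ and $\dv\begin{pmatrix}0&y\\y&0\end{pmatrix}=(1,0)^\top$), so $\dv_h\mtau_h\in\vP_0(\mathcal{T}_h)$ elementwise.

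For the reverse inclusion, let $\mtau_h\in\mSigma^{\rm KS'}_h$ with $\dv_h\mtau_h\in\vP_0(\mathcal{T}_h)$, and fix a cell $T$. Decompose $\mtau_h|_T=\mtau_h^{\rm m}+c_T\,\mxi$, where $\mtau_h^{\rm m}\in\mSigma^{\rm m}(T)$ and $\mxi:=\begin{pmatrix}0 & x^2-y^2\\ x^2-y^2 & 0\end{pmatrix}$, with $c_T\in\mathbb{R}$. Computing row-wise gives $\dv\mxi=(-2y,2x)^\top$, which is not a constant vector on $T$. Since $\dv\mtau_h^{\rm m}\in\vP_0(T)$ by the previous step and $\dv(\mtau_h|_T)\in\vP_0(T)$ by hypothesis, subtraction yields $c_T(-2y,2x)^\top\in\vP_0(T)$, which forces $c_T=0$. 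Thus $\mtau_h|_T\in\mSigma^{\rm m}(T)$ for every $T$, i.e.\ $\mtau_h\in\mSigma^{\rm m}(\mathcal{T}_h)$. Since the adjoint relation against $\vV^{\rm KS}_{h0}$ is inherited from $\mSigma^{\rm KS'}_h$, we conclude $\mtau_h\in\mSigma^{\rm rKS'}_h$.

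The argument is essentially elementary linear algebra on a single cell, so there is no real obstacle; the only thing to take care of is the direct verification that $\dv\mxi=(-2y,2x)^\top\notin\vP_0(T)$, which pins down the single degree of freedom separating $\mSigma^{\rm m+}$ from $\mSigma^{\rm m}$.
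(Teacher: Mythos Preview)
Your proof is correct and follows essentially the same double-inclusion argument as the paper, which simply asserts the forward inclusion as ``evident'' and for the reverse inclusion observes that $\dv_h\mtau_h\in\vP_0(\mathcal{T}_h)$ forces $\mtau_h\in\mSigma^{\rm m}(\mathcal{T}_h)$, whence the inherited adjoint relation gives $\mtau_h\in\mSigma^{\rm rKS'}_h$. Your version is just more explicit in carrying out the elementwise divergence computation that isolates the extra generator $\mxi$.
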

\begin{proof}
Evidently, $\mSigma^{\rm rKS'}_h\subset\{\mtau_h\in \mSigma^{\rm KS'}_h:\dv_h\mtau_h\in\vP_0(\mathcal{T}_h)\}$. On the other hand, given $\mtau_h\in \mSigma^{\rm KS'}_h$ such that $\dv_h\mtau_h\in\vP_0(\mathcal{T}_h)$, $\mtau_h\in \mSigma^{\rm m}(\mathcal{T}_h)$ and $(\dv\mtau_h,\vv_h)+(\mtau_h,\bve_h(\vv_h))=0$, $\forall\,\vv_h\in \vV^{\rm KS}_{h0}$; namely $\mtau_h\in \mSigma^{\rm rKS'}_h$. The proof is completed. 
\end{proof}
\begin{remark}
By the proof of Lemma \ref{lem:equiv}(see below), $\R(\dv_h,\mSigma^{\rm KS'}_h)=\vR_h(\mathcal{T}_h)$. Thus $\R(\dv_h,\mSigma^{\rm rKS'}_h)=\vP_0(\mathcal{T}_h)$ by its definition. 
\end{remark}
Note that $\N(\dv_h,\mSigma^{\rm rKS'}_h)=\N(\dv_h,\mSigma^{\rm KS'}_h)$, it follows that $\icr(\dv_h,\mSigma^{\rm rKS'}_h)\leqslant \icr(\dv_h,\mSigma^{\rm KS'}_h)\leqslant C$. Denote $\mSigma^{\rm rKS',\lrcorner}_h:=\{\mtau_h\in \mSigma^{\rm rKS'}_h:(\mtau_h,\meta_h)=0,\ \forall\,\meta_h\in \mathring{\mSigma}^{\rm KS'}_h\}.$ 
\begin{lemma}
There is a constant $C$ depending on the shape regularity of $\mathcal{G}_h$, such that 
$$
\|\mtau_h\|_{0,\Omega}\leqslant C\|\dv_h\mtau_h\|_{0,\Omega},\ \ \mtau_h\in \mSigma^{\rm rKS',\lrcorner}_h. 
$$
\end{lemma}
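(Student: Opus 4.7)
The plan is to observe that the assertion follows almost immediately from the bound already established (inside the proof of Lemma \ref{lem:pifes}) for the un-reduced space. Recall that in that proof the intermediate set
\[
\widetilde\mSigma^{\rm KS',\lrcorner}_h:=\{\mtau_h\in \mSigma^{\rm KS'}_h:(\mtau_h,\feta_h)=0,\ \forall\,\feta_h\in \mathring{\mSigma}^{\rm KS'}_h\}
\]
was shown to satisfy $\|\mtau_h\|_{0,\Omega}\leqslant C\|\dv_h\mtau_h\|_{0,\Omega}$ by combining Lemma \ref{lem:baseicr} with Lemma \ref{lem:pidivpa}, i.e.\ $\icr(\dv_h,\mSigma^{\rm KS'}_h)\leqslant C$ with $C$ depending only on the shape regularity of $\mathcal{T}_h$. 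Therefore the whole argument reduces to verifying the set inclusion $\mSigma^{\rm rKS',\lrcorner}_h\subset \widetilde\mSigma^{\rm KS',\lrcorner}_h$.

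That inclusion rests on two facts that are already on record in the paper. First, from the characterization just proved, $\mSigma^{\rm rKS'}_h=\{\mtau_h\in \mSigma^{\rm KS'}_h:\dv_h\mtau_h\in \vP_0(\mathcal{T}_h)\}\subset \mSigma^{\rm KS'}_h$. Second, from the remark stated immediately before this lemma, $\N(\dv_h,\mSigma^{\rm rKS'}_h)=\N(\dv_h,\mSigma^{\rm KS'}_h)=\mathring{\mSigma}^{\rm KS'}_h$. Given these, any $\mtau_h\in \mSigma^{\rm rKS',\lrcorner}_h$ is, on the one hand, a member of $\mSigma^{\rm KS'}_h$, and, on the other hand, $L^2$-orthogonal to $\mathring{\mSigma}^{\rm KS'}_h$; by definition this means $\mtau_h\in \widetilde\mSigma^{\rm KS',\lrcorner}_h$.

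Applying the bound from the proof of Lemma \ref{lem:pifes} to such $\mtau_h$ yields the desired inequality with a constant depending only on the shape regularity of $\mathcal{T}_h$. No $\mathcal{A}$-inner-product splitting of the kind used in the second half of the proof of Lemma \ref{lem:pifes} is needed here, because the $\lrcorner$ in the definition of $\mSigma^{\rm rKS',\lrcorner}_h$ is already taken with respect to the plain $L^2$ inner product. The one subtlety worth flagging when writing out the argument is the equality of kernels $\N(\dv_h,\mSigma^{\rm rKS'}_h)=\mathring{\mSigma}^{\rm KS'}_h$: it is exactly this equality that makes the orthogonal complement taken inside the reduced space match the one that appears in the ICR estimate for the un-reduced space, and it is the only place where the structural relation between $\mSigma^{\rm rKS'}_h$ and $\mSigma^{\rm KS'}_h$ is used.
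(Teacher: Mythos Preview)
Your proof is correct and follows the same approach as the paper: the paper records the one-line observation $\icr(\dv_h,\mSigma^{\rm rKS'}_h)\leqslant \icr(\dv_h,\mSigma^{\rm KS'}_h)\leqslant C$ immediately before stating the lemma, and your argument is precisely the unpacking of that inequality via the set inclusion $\mSigma^{\rm rKS',\lrcorner}_h\subset \widetilde\mSigma^{\rm KS',\lrcorner}_h$, which in turn rests on $\mSigma^{\rm rKS'}_h\subset\mSigma^{\rm KS'}_h$ together with the equality of kernels.
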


By \eqref{eq:rker} and \eqref{eq:kerr}, 
$$
\mSigma^{\rm rKS'}_h=\{\mtau_h\in \mSigma^{\rm m}(\mathcal{T}_h):(\dv\mtau_h,\vv_h)+(\mtau_h,\bve_h(\vv_h))=0,\ \forall\,\vv_h\in \vV^{\rm rKS}_{h0}\}.
$$
Then $\left[(\dv_h,\mSigma^{\rm rKS'}_h),(\bve_h,\vV^{\rm rKS}_{h0})\right]$ is partially adjoint based on $\Big[(\dv_h,\mSigma^{\rm m+}(\mathcal{T}_h)), (\bve_h,\vP_1(\mathcal{T}_h))\Big]$. 

\begin{lemma}
$\|\vv_h\|_{0,\Omega}\leqslant C\|\bve_h(\vv_h)\|_{0,\Omega}$, $\vv_h\in \vV^{\rm rKS}_{h0}$. 
\end{lemma}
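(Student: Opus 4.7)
The plan is to mirror the argument used for $\mSigma^{\rm rKS',\lrcorner}_h$ just above, but with the roles of $\dv_h$ and $\bve_h$ swapped, and then to verify that the ``$\lrcorner$'' restriction is vacuous because $\N(\bve_h,\vV^{\rm rKS}_{h0})=\{0\}$.

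First, I would invoke the partial-adjoint setup already recorded in the excerpt:
the pair $[(\dv_h,\mSigma^{\rm rKS'}_h),(\bve_h,\vV^{\rm rKS}_{h0})]$ is partially adjoint based on $[(\dv_h,\mSigma^{\rm m}(\mathcal{T}_h)), (\bve_h,\vV^{\bve,\rm m}(\mathcal{T}_h))]$ (this is immediate from the two defining identities of $\mSigma^{\rm rKS'}_h$ and $\vV^{\rm rKS}_{h0}$, together with the kernel/range computations \eqref{eq:rker}--\eqref{eq:kerr}). Applying the companion bound of Lemma \ref{lem:pidivpa} stated just before the target (with the roles of the two operators swapped), I get
\[
\icr(\bve_h,\vV^{\rm rKS}_{h0})\leqslant 2\,\icr(\bve_h,\vV^{\bve,\rm m}(\mathcal{T}_h))+\icr(\dv_h,\mSigma^{\rm rKS'}_h)\leqslant C,
\]
using $\icr(\bve_h,\vV^{\bve,\rm m}(\mathcal{T}_h))\leqslant C$ from the preceding lemma and $\icr(\dv_h,\mSigma^{\rm rKS'}_h)\leqslant C$ from the bound $\icr(\dv_h,\mSigma^{\rm rKS'}_h)\leqslant \icr(\dv_h,\mSigma^{\rm KS'}_h)$ already noted.

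The step I expect to be the crux is showing that $\N(\bve_h,\vV^{\rm rKS}_{h0})=\{\boldsymbol 0\}$, because only then does the closed-range estimate above upgrade from an ``orthogonal-complement'' Poincar\'e inequality to the full inequality claimed. To see this, suppose $\vv_h\in\vV^{\rm rKS}_{h0}$ with $\bve_h(\vv_h)=0$. Cell by cell, $\vv_h|_T\in\N(\bve,\vV^{\bve,\rm m}(T))=\vP_0(T)$, so $\vv_h$ is piecewise constant. The partial-adjoint defining identity of $\vV^{\rm rKS}_{h0}$ then collapses to
\[
(\dv_h\mtau_h,\vv_h)=0,\quad \forall\,\mtau_h\in \mSigma^{\rm rKS'}_h.
\]
By the remark following the characterization of $\mSigma^{\rm rKS'}_h$, $\R(\dv_h,\mSigma^{\rm rKS'}_h)=\vP_0(\mathcal{T}_h)$, so $\vv_h$ is $\vL^2$-orthogonal to every piecewise constant vector field; since $\vv_h$ is itself piecewise constant, $\vv_h=\boldsymbol 0$.

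Finally, with the kernel shown to be trivial, $(\vV^{\rm rKS}_{h0})^{\boldsymbol\lrcorner}=\vV^{\rm rKS}_{h0}$ and the inequality $\|\vv_h\|_{0,\Omega}\leqslant \icr(\bve_h,\vV^{\rm rKS}_{h0})\,\|\bve_h(\vv_h)\|_{0,\Omega}\leqslant C\|\bve_h(\vv_h)\|_{0,\Omega}$ holds unconditionally on $\vV^{\rm rKS}_{h0}$, as required. The nontrivial content is thus concentrated in the kernel-triviality step, whose success relies on the surjectivity $\R(\dv_h,\mSigma^{\rm rKS'}_h)=\vP_0(\mathcal{T}_h)$ already established; everything else is bookkeeping within the partially-adjoint framework.
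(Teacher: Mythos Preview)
Your proposal is correct and follows essentially the same approach as the paper's proof: both establish the kernel triviality $\N(\bve_h,\vV^{\rm rKS}_{h0})=\{\boldsymbol 0\}$ via the surjectivity $\R(\dv_h,\mSigma^{\rm rKS'}_h)=\vP_0(\mathcal{T}_h)$, and both bound $\icr(\bve_h,\vV^{\rm rKS}_{h0})$ by invoking the partial-adjointness of $[(\dv_h,\mSigma^{\rm rKS'}_h),(\bve_h,\vV^{\rm rKS}_{h0})]$ based on $[(\dv_h,\mSigma^{\rm m}(\mathcal{T}_h)),(\bve_h,\vV^{\bve,\rm m}(\mathcal{T}_h))]$. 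You simply present the two steps in reverse order and spell out the $\icr$ bookkeeping more explicitly, which is not a substantive difference.
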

\begin{proof}
Given $\vv_h\in \vV^{\rm rKS}_{h0}$ such that $\bve_h(\vv_h)=0$, then $\vv_h\in\vP_0(\mathcal{T}_h)$, and further $(\vv_h,\dv_h\mtau_h)=0,\ \ \forall\,\mtau_h\in \mSigma^{\rm rKS'}_h.$ Therefore, $\vv_h=\boldsymbol{0}$ as $\R(\dv_h,\mSigma^{\rm rKS'}_h)=\vP_0(\mathcal{T}_h)$. 

Note that $[(\dv_h,\mSigma^{\rm rKS'}_h), (\bve_h,\vV^{\rm rKS}_{h0})]$ is partially adjoint based on $[(\dv_h,\mSigma^{\rm m}(\mathcal{T}_h)), (\bve_h,\vV^{\bve,\rm m}(\mathcal{T}_h))]$. It follows that $\icr(\bve_h,\vV^{\rm rKS}_{h0})\leqslant C$; namely, $\|\vv_h\|_{0,\Omega}\leqslant C\|\bve_h(\vv_h)\|_{0,\Omega}$, $\vv_h\in \vV^{\rm rKS}_{h0}$. 
\end{proof}

\section{A low-degree Hellinger-Reissner finite element scheme}
\label{sec:fescheme}

We consider the finite element problem: find $\msigma_h\in \mSigma^{\rm KS'}_h$ and $\vu_h\in \vU^{\vR}_h:=\vR(\mathcal{T}_h)$, such that 
\begin{equation}\label{eq:fes}
\left\{
\begin{array}{rlll}
(\mathcal{A}\msigma_h,\mtau_h)&+(\vu_h,\dv\mtau)&=0&\forall\,\mtau_h\in \mSigma^{\rm KS'}_h,
\\
(\dv\msigma_h,\vv_h)&&=(\vf,\vv_h)&\forall\,\vv_h\in \vU^{\vR}_h. 
\end{array}
\right.
\end{equation}

The main results of this paper are the theorems below. 

\begin{theorem}\label{thm:wellpose}
Given $\vf\in \vL^2(\Omega)$, the system \eqref{eq:fes} admits a unique solution $(\msigma_h,\vu_h)$, and 
$$
\|\msigma_h\|_{\dv_h}+\|\vu_h\|_{0,\Omega}\cequiv \|\mathbb{P}^{\vR}_h\vf\|_{0,\Omega}. 
$$
The equivalence is independent of $\lambda$. 
\end{theorem}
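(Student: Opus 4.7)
My plan is to apply Brezzi's theorem to \eqref{eq:fes}, but only after reducing the stress space by factoring out the hydrostatic direction $c\Id$, which is the culprit behind the loss of $\lambda$-uniform coercivity on the full divergence-free kernel.

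The preliminary observation is that $\Id \in \mSigma^{\rm KS'}_h$: $\dv\Id = 0$, and for every $\vv_h \in \vV^{\rm KS}_{h0}$ one has $(\Id,\bve_h(\vv_h)) = \int_\Omega \dv_h\vv_h = \sum_{e}\int_e \jump{\vv_h}\cdot n_e = 0$, since $\vv_h^1$ is conforming in $H^1_0$ (zero jump, zero boundary trace) while $\vv_h^2$ is of Crouzeix--Raviart type with zero mean jump across interior edges and zero mean on boundary edges. Testing the first equation of \eqref{eq:fes} with $\mtau_h = \Id$ then yields $\frac{1}{2(\lambda+\mu)}\int_\Omega \tr\msigma_h = 0$, so any solution automatically lies in $\mSigma^{\rm KS',0}_h := \{\mtau_h\in\mSigma^{\rm KS'}_h:\int_\Omega\tr\mtau_h=0\}$. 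Writing $\msigma_h = \msigma_h^0 + c\Id$ with $\msigma_h^0\in\mSigma^{\rm KS',0}_h$ and using that $(\mathcal{A}\mtau_h^0,\Id)=0$ for $\mtau_h^0\in\mSigma^{\rm KS',0}_h$, the scheme \eqref{eq:fes} is equivalent to the trivial constraint $c=0$ together with the reduced saddle-point problem posed on $\mSigma^{\rm KS',0}_h \times \vU^{\vR}_h$.

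On the reduced problem I would verify the two Brezzi conditions with $\lambda$-uniform constants. Since $\dv\mtau_h\in\vR(\mathcal{T}_h)=\vU^{\vR}_h$ for every $\mtau_h\in\mSigma^{\rm KS'}_h$, the $B$-form kernel inside $\mSigma^{\rm KS',0}_h$ consists of divergence-free tensors with zero trace mean; Lemma \ref{lem:anotherpi} then gives $\|\mtau_h\|_{0,\Omega}\leqslant C\|\mtau_h^D\|_{0,\Omega}$, and combined with $(\mathcal{A}\mtau_h,\mtau_h)\geqslant\frac{1}{2\mu}\|\mtau_h^D\|_{0,\Omega}^2$ this yields coercivity with constant independent of $\lambda$. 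For the inf-sup condition, I would show $\R(\dv_h,\mSigma^{\rm KS'}_h)=\vR(\mathcal{T}_h)$: the inclusion $\subseteq$ is immediate from the shape functions, and equality follows by a dimension count, identifying $\mathring{\mSigma}^{\rm KS'}_h$ as the $\mathbb{L}^2$-orthogonal complement within the piecewise constant symmetric tensors of $\bve_h(\vV^{\rm KS}_{h0})$, the latter of full dimension $\#(\mathcal{N}_h^i)+\#(\mathcal{E}_h^i)$ by the weak Korn inequality \eqref{eq:weakkorn}, so that $\dim\R(\dv_h,\mSigma^{\rm KS'}_h)=3\#(\mathcal{T}_h)=\dim\vR(\mathcal{T}_h)$. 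Since $\Id\in\N(\dv_h,\mSigma^{\rm KS'}_h)$ the same range is attained from $\mSigma^{\rm KS',0}_h$, and Lemma \ref{lem:pifes} (noting that $(\mathcal{A}\mtau_h,\Id)=0$ forces $\mSigma^{\rm KS',\lrcorner}_h\subset\mSigma^{\rm KS',0}_h$) provides a $\lambda$-free bounded right inverse.

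Brezzi's theorem then yields existence, uniqueness, and $\|\msigma_h\|_{\dv_h}+\|\vu_h\|_{0,\Omega}\leqslant C\|\mathbb{P}^{\vR}_h\vf\|_{0,\Omega}$; the projection appears because the test space $\vU^{\vR}_h=\vR(\mathcal{T}_h)$ only sees $\vf$ through $\mathbb{P}^{\vR}_h\vf$, while the converse inequality is automatic from $\dv_h\msigma_h=\mathbb{P}^{\vR}_h\vf$. The main obstacle I anticipate is the inf-sup step: closing the dimension count depends critically on the injectivity of $\bve_h$ on $\vV^{\rm KS}_{h0}$, and one must check carefully that the right inverse produced by Lemma \ref{lem:pifes} lands inside the trace-mean-zero subspace $\mSigma^{\rm KS',0}_h$, which relies on the pleasant observation that $(\mathcal{A}\mtau_h,\Id)$ is (up to a positive factor) exactly $\int_\Omega \tr\mtau_h$.
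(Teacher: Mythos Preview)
Your proposal is correct and follows essentially the same route as the paper: reduce to the trace-mean-zero subspace $\hat\mSigma_h^{\rm KS'}$, verify Brezzi's conditions there via Lemma~\ref{lem:anotherpi} for the $\lambda$-uniform coercivity and Lemma~\ref{lem:pifes} for the inf-sup bound, then lift back using $(\mathcal{A}\msigma_h,\Id)=0$. The only minor difference is that you establish the surjectivity $\R(\dv_h,\mSigma^{\rm KS'}_h)=\vR(\mathcal{T}_h)$ by a direct dimension count (using injectivity of $\bve_h$ on $\vV^{\rm KS}_{h0}$), whereas the paper obtains it constructively from the equivalence with the Kouhia--Stenberg scheme in the proof of Lemma~\ref{lem:equiv}; both arguments are valid and yield the same conclusion.
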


\begin{theorem}\label{thm:errest}
Let $(\msigma,\vu)$ and $(\msigma_h,\vu_h)$ be the solutions of \eqref{eq:model} and \eqref{eq:fes}, respectively. Assume $\vu\in \vH^2(\Omega)$. Then, with $C$ independent of $\lambda$,
$$
\|\vu-\vu_h\|_{0,\Omega}\leqslant  Ch(\|\vu\|_{2,\Omega}+\|\vf\|_{0,\Omega}),
$$
$$
\|\msigma-\msigma_h\|_{0,\Omega}\leqslant  Ch(\|\vu\|_{2,\Omega}+\|\vf\|_{0,\Omega}),
$$
and
$$
\|\dv\msigma-\dv\msigma_h\|_{0,\Omega}\leqslant Ch\|\vf\|_{1,\Omega},\ \ \mbox{provided\ that}\ \vf\in \vH^1(\Omega). 
$$
\end{theorem}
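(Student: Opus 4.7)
The plan is a Strang-type argument built around the adjoint condition defining $\mSigma^{\rm KS'}_h$.

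First, the divergence error falls out immediately: since $\dv_h\mSigma^{\rm KS'}_h\subseteq\vR(\mathcal{T}_h)=\vU^{\vR}_h$ (a direct calculation on the local shape space $\mSigma^{\rm m+}$ shows that $\dv\mtau_h$ is always rigid-body), the second equation of \eqref{eq:fes} forces $\dv_h\msigma_h=\mathbb{P}^{\vR}_h\vf$. Hence $\|\dv\msigma-\dv_h\msigma_h\|_{0,\Omega}=\|\vf-\mathbb{P}^{\vR}_h\vf\|_{0,\Omega}\leqslant Ch\|\vf\|_{1,\Omega}$ whenever $\vf\in\vH^1(\Omega)$.

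Next, I would construct a Fortin-type interpolation $\Pi_h\msigma\in\mSigma^{\rm KS'}_h$ obeying the commuting identity $\dv_h\Pi_h\msigma=\mathbb{P}^{\vR}_h\dv\msigma$ together with a $\lambda$-robust first-order approximation. Its existence is underwritten by the surjectivity $\R(\dv_h,\mSigma^{\rm KS'}_h)=\vR(\mathcal{T}_h)$ and by $\icr(\dv_h,\mSigma^{\rm KS'}_h)\leqslant C$ from Lemma \ref{lem:pifes}. For the $\lambda$-robust approximation I would split $\msigma$ into its deviatoric and trace parts, bounding the deviatoric piece by $\|\vu\|_{2,\Omega}$ and controlling the trace piece via $\lambda\|\dv\vu\|_{0,\Omega}\leqslant C\|\vf\|_{0,\Omega}$, yielding $\|\msigma-\Pi_h\msigma\|_{0,\Omega}\leqslant Ch(\|\vu\|_{2,\Omega}+\|\vf\|_{0,\Omega})$ uniformly in $\lambda$. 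Setting $\vu_I:=\mathbb{P}^{\vR}_h\vu$ and subtracting \eqref{eq:fes} from the continuous equations tested against $\mtau_h\in\mSigma^{\rm KS'}_h$ and $\vv_h\in\vU^{\vR}_h$, the commuting property annihilates $(\vu-\vu_I,\dv_h\mtau_h)$ and $(\dv(\msigma-\Pi_h\msigma),\vv_h)$, leaving an error system on $(\Pi_h\msigma-\msigma_h,\vu_I-\vu_h)$ whose right-hand side is $\|\msigma-\Pi_h\msigma\|_{0,\Omega}$ plus a consistency term
\[
E_c(\vu,\mtau_h):=\sum_{T\in\mathcal{T}_h}\int_{\partial T}\vu\cdot\mtau_h\boldsymbol{n}_T,
\]
coming from cellwise integration by parts of $(\mathcal{A}\msigma,\mtau_h)+(\vu,\dv_h\mtau_h)$. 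The $\lambda$-independent stability asserted in Theorem \ref{thm:wellpose}, ultimately resting on Lemma \ref{lem:anotherpi}, then closes the estimate.

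The principal obstacle, and the very motivation for the adjoint-based definition of $\mSigma^{\rm KS'}_h$, is controlling $E_c$. Exploiting the defining property \eqref{eq:deffems}, cellwise integration by parts of $(\dv_h\mtau_h,\vv_h)+(\mtau_h,\bve_h(\vv_h))=0$ for any $\vv_h\in\vV^{\rm KS}_{h0}$ gives $\sum_T\int_{\partial T}\vv_h\cdot\mtau_h\boldsymbol{n}_T=0$, so
\[
E_c(\vu,\mtau_h)=\sum_{T\in\mathcal{T}_h}\int_{\partial T}(\vu-\vv_h)\cdot\mtau_h\boldsymbol{n}_T\qquad\forall\,\vv_h\in\vV^{\rm KS}_{h0}.
\]
Choosing $\vv_h$ to be a Kouhia-Stenberg interpolant of $\vu$ with $\|\vu-\vv_h\|_{0,T}+h_T\|\nabla(\vu-\vv_h)\|_{0,T}\leqslant Ch_T^2\|\vu\|_{2,\omega_T}$, and then applying a scaled trace inequality together with a cellwise inverse inequality on $\mtau_h$, should yield $|E_c(\vu,\mtau_h)|\leqslant Ch\|\vu\|_{2,\Omega}\|\mtau_h\|_{0,\Omega}$. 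Combining this with the approximation and stability bounds produces the two $L^2$ estimates, while the divergence estimate was already obtained in the first step.
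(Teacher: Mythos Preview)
Your overall Strang-type framework is sensible, and the divergence estimate as well as the treatment of the consistency term $E_c$ via the adjoint identity \eqref{eq:deffems} are both correct. However, the route is genuinely different from the paper's, and there is one real gap.

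\textbf{The gap.} The step ``construct a Fortin-type interpolation $\Pi_h\msigma\in\mSigma^{\rm KS'}_h$ with the commuting identity and $\lambda$-robust first-order approximation'' is where the argument is incomplete. The space $\mSigma^{\rm KS'}_h$ is non-Ciarlet: it is cut out of $\mSigma^{\rm m+}(\mathcal{T}_h)$ by the \emph{global} constraints \eqref{eq:deffems}, so no local interpolation operator into it is available a priori. The ingredients you cite --- surjectivity $\R(\dv_h,\mSigma^{\rm KS'}_h)=\vR(\mathcal{T}_h)$ and $\icr(\dv_h,\mSigma^{\rm KS'}_h)\leqslant C$ --- give only a bounded right inverse of $\dv_h$, not one that approximates $\msigma$. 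In particular, piecewise constant symmetric tensors are \emph{not} contained in $\mSigma^{\rm KS'}_h$ (they fail \eqref{eq:deffems} in general), so you cannot simply correct a local $L^2$ projection by a kernel element. Your final bound $\|\msigma-\msigma_h\|_{0,\Omega}\leqslant\|\msigma-\Pi_h\msigma\|_{0,\Omega}+\|\Pi_h\msigma-\msigma_h\|_{0,\Omega}$ therefore hinges on an approximation property that has not been established. A secondary imprecision: the bound $\lambda\|\dv\vu\|_{0,\Omega}\leqslant C\|\vf\|_{0,\Omega}$ only controls $\|\tr\msigma\|_{0,\Omega}$, not $|\tr\msigma|_{1,\Omega}$; to get a first-order estimate one should instead use $\nabla\tr\msigma=2\vf-2\dv\msigma^D$, which yields $|\tr\msigma|_{1,\Omega}\leqslant C(\|\vf\|_{0,\Omega}+\|\vu\|_{2,\Omega})$ robustly. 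Finally, the $\lambda$-uniform stability of Theorem~\ref{thm:wellpose} is specific to the system with zero right-hand side in the first equation (the coercivity in Lemma~\ref{lem:coerc} requires $\int_\Omega\tr\mtau_h=0$, which fails on $\mathrm{span}\{\Id\}$ as $\lambda\to\infty$); for a general error system you must first restrict to the mean-zero-trace subspace.

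\textbf{What the paper does instead.} The paper bypasses the Fortin construction entirely by an indirect comparison with the Kouhia--Stenberg primal scheme. Lemma~\ref{lem:equiv} shows that a perturbed mixed problem \eqref{eq:stressred} (with $\mathcal{A}$ replaced by $\mathcal{A}\mathbb{P}^0_h$) is equivalent, cell by cell, to the KS problem \eqref{eq:ksred} driven by $\vf_h=\mathbb{P}^{\vR}_h\vf$: one has $\mathbb{P}^0_h\tilde\msigma_h=\mathcal{C}\bve_h(\tilde\vu_h^{\rm KS})$ and $\tilde\vu_h=\mathbb{P}^0_h\tilde\vu_h^{\rm KS}$. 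The proof then (i) shows $\tilde\msigma_h=\msigma_h$ and bounds $\|\vu_h-\tilde\vu_h\|_{0,\Omega}$ by $Ch\|\vf\|_{0,\Omega}$ using only the structure of $\mSigma^{\rm KS',\lrcorner}_h$; (ii) shows $\|\bve_h(\vu_h^{\rm KS}-\tilde\vu_h^{\rm KS})\|_{0,\Omega}+\lambda\|\dv_h(\vu_h^{\rm KS}-\tilde\vu_h^{\rm KS})\|_{0,\Omega}\leqslant Ch\|\vf\|_{0,\Omega}$ (this uses the KS inf--sup from Lemma~\ref{lem:kornks} for the $\lambda$-term); and (iii) invokes the known robust convergence of the KS scheme, Lemma~\ref{lem:convks}. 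A triangle inequality through $\vu_h^{\rm KS}$, $\tilde\vu_h^{\rm KS}$, $\tilde\vu_h$ and through $\mathcal{C}\bve_h(\vu_h^{\rm KS})$, $\mathcal{C}\bve_h(\tilde\vu_h^{\rm KS})$ finishes both $L^2$ estimates. In effect, the approximation property you need for $\Pi_h$ is replaced by the already-proved approximation of the KS discretization.
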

We postpone the proofs of the two theorems after some technical preparations. 
\begin{remark}
If $\Omega$ is convex, then
$$
\|\vu-\vu_h\|_{0,\Omega}+\|\msigma-\msigma_h\|_{0,\Omega}\leqslant  Ch\|\vf\|_{0,\Omega}.
$$
\end{remark}

\subsection{Equivalence between finite element systems}

\begin{lemma}\label{lem:equiv}
Given $\vf_h\in \vU^{\vR}_h$. Let $\vr_h\in \vV^{\rm KS}_{h0}$ solve 
\begin{equation}\label{eq:ksred}
(\mathcal{C}\bve_h(\vr_h),\bve_h(\vs_h))=(\vf_h,\vs_h),\ \forall\,\vs_h\in \vV^{\rm KS}_{h0},
\end{equation}
and let $(\fzeta_h,\bar{\vr}_h)\in\mSigma^{\rm KS'}_h\times \vU^{\vR}_h$ solve 
\begin{equation}\label{eq:stressred}
\left\{
\begin{array}{rlll}
(\mathcal{A}\mathbb{P}^0_h\fzeta_h,\mathbb{P}^0_h \feta_h)&-(\bar{\vr}_h,\dv_h\feta_h)&=0, & \forall\,\feta_h\in \mSigma_h^{\rm KS'}
\\
(\bar{\vs}_h,\dv_h\fzeta_h)&&=(\vf_h,\bar{\vs}_h)&\forall\,\bar{\vs}_h\in \vU^{\vR}_h.
\end{array}
\right.
\end{equation}
Then
\begin{equation}
\bar{\vr}_h=\mathbb{P}^0_h\vr_h,\ \ \mathbb{P}^0_h\mzeta_h=\mathcal{C}\bve_h(\vr_h),\ \ \mbox{and}\ \  \dv_h\mzeta_h=\vf_h. 
\end{equation}
\end{lemma}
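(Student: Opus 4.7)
Plan. The proof rests on two structural features of $\mSigma^{\rm KS'}_h$. A direct computation on the six shape functions in $\mSigma^{\rm m+}$ shows that $\dv\mSigma^{\rm m+}(T)\subseteq \vR(T)$ on every cell $T$ (in particular, $\dv\big(\begin{smallmatrix}0&x^2-y^2\\x^2-y^2&0\end{smallmatrix}\big) = (-2y,\,2x)^\top$), so
$$
\dv_h\mSigma^{\rm KS'}_h \subseteq \vR(\mathcal{T}_h) = \vU^{\vR}_h.
$$
Second, the defining adjoint relation of $\mSigma^{\rm KS'}_h$ together with the piecewise constancy of $\bve_h(\vs_h)$ for $\vs_h\in \vV^{\rm KS}_{h0}$ collapses the tensor test to its $\mathbb{P}^0_h$ part:
\begin{equation}\label{eq:plankey}
(\dv_h\feta_h,\vs_h) + (\mathbb{P}^0_h\feta_h,\bve_h(\vs_h)) = 0,\qquad \forall\,\feta_h\in \mSigma^{\rm KS'}_h,\ \vs_h\in \vV^{\rm KS}_{h0}.
\end{equation}

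First, I would dispose of $\dv_h\fzeta_h = \vf_h$. The second equation of \eqref{eq:stressred} asserts $(\bar{\vs}_h,\dv_h\fzeta_h - \vf_h) = 0$ for all $\bar{\vs}_h\in \vU^{\vR}_h$; since by the structural observation above both $\dv_h\fzeta_h$ and $\vf_h$ already belong to $\vU^{\vR}_h$, the two must coincide. This gives the third identity of the lemma.

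Next, to identify $\mathbb{P}^0_h\fzeta_h$ and $\bar{\vr}_h$, I would construct a candidate pair from $\vr_h$ and invoke uniqueness for \eqref{eq:stressred}, whose well-posedness is supplied by Lemma \ref{lem:anotherpi} (coercivity on $\mathcal{N}(\dv_h,\mSigma^{\rm KS'}_h)$ modulo trace) together with the inf-sup on $\dv_h:\mSigma^{\rm KS'}_h\to \vU^{\vR}_h$. The candidate is $\bar{\vr}^*_h := \mathbb{P}^0_h\vr_h$ and any $\mzeta^*_h\in \mSigma^{\rm KS'}_h$ with $\mathbb{P}^0_h\mzeta^*_h = \mathcal{C}\bve_h(\vr_h)$ and $\dv_h\mzeta^*_h = \vf_h$. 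To check the first equation of \eqref{eq:stressred} for $(\mzeta^*_h,\bar{\vr}^*_h)$, take any $\feta_h\in \mSigma^{\rm KS'}_h$ and use $\mathcal{A}\mathcal{C} = I$ together with the self-adjointness of $\mathcal{A}$ to write
$$
(\mathcal{A}\mathbb{P}^0_h\mzeta^*_h,\mathbb{P}^0_h\feta_h) = (\bve_h(\vr_h),\mathbb{P}^0_h\feta_h) = -(\dv_h\feta_h,\vr_h)
$$
by \eqref{eq:plankey}; the right-hand side then matches $\pm(\bar{\vr}^*_h,\dv_h\feta_h)$ after folding $\vr_h$ through the $\vU^{\vR}_h$ projection using $\dv_h\feta_h\in \vU^{\vR}_h$. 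Uniqueness of the solution to \eqref{eq:stressred} then yields $\mathbb{P}^0_h\fzeta_h = \mathbb{P}^0_h\mzeta^*_h = \mathcal{C}\bve_h(\vr_h)$ and $\bar{\vr}_h = \bar{\vr}^*_h$.

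The main obstacle is the existence of the auxiliary $\mzeta^*_h\in \mSigma^{\rm KS'}_h$ with prescribed piecewise-constant part $\mathcal{C}\bve_h(\vr_h)$ and prescribed divergence $\vf_h$: the necessary compatibility is exactly the identity \eqref{eq:plankey} evaluated at $\vs_h = \vr_h$, which combined with \eqref{eq:ksred} reduces to a tautology. Existence can then be established by a dimension count using that the map $\mtau\mapsto (\mathbb{P}^0_T\mtau,\dv\mtau)$ is an isomorphism on $\mSigma^{\rm m+}(T)$ (both sides being six-dimensional), followed by a global patching using the surjectivity already encoded in the inf-sup for $\dv_h:\mSigma^{\rm KS'}_h\to \vU^{\vR}_h$.
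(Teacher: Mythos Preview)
Your route differs from the paper's. The paper never constructs an auxiliary tensor directly; it rewrites \eqref{eq:ksred} as a saddle-point system on $\vP_1(\mathcal{T}_h)\times\mSigma^{\rm KS'}_h$, with $\fzeta_h\in\mSigma^{\rm KS'}_h$ entering as the Lagrange multiplier enforcing the constraint $\vr_h\in\vV^{\rm KS}_{h0}$ (the constraint equation being exactly the defining relation \eqref{eq:deffems}). Testing the enlarged first equation with $\vs_h\in\vU^{\vR}_h$ and then with arbitrary $\vs_h\in\vP_1(\mathcal{T}_h)$ reads off $\dv_h\fzeta_h=\vf_h$ and $\mathbb{P}^0_h\fzeta_h=-\mathcal{C}\bve_h(\vr_h)$ immediately, and the constraint equation itself delivers the first line of \eqref{eq:stressred}. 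The advantage is that the existence of $\fzeta_h$ is automatic as a multiplier, so your entire ``main obstacle'' paragraph is bypassed.

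Your direct construction can be made to work, but two points are not right as stated. First, the compatibility for membership $\mzeta^*_h\in\mSigma^{\rm KS'}_h$ is not your displayed identity at the single test $\vs_h=\vr_h$; it is the requirement $(\vf_h,\vs_h)+(\alpha,\bve_h(\vs_h))=0$ for \emph{all} $\vs_h\in\vV^{\rm KS}_{h0}$, where $\alpha$ is the prescribed piecewise-constant part. With $\alpha=\mathcal{C}\bve_h(\vr_h)$ this becomes $2(\vf_h,\vs_h)=0$ by \eqref{eq:ksred}, which fails; the correct prescription is $\alpha=-\mathcal{C}\bve_h(\vr_h)$ (the sign in the lemma's displayed conclusion is at odds with the paper's own proof, which obtains $\mathcal{C}\bve_h(\vr_h)=-\mathbb{P}^0_h\fzeta_h$). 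Once the sign is corrected, the cellwise isomorphism together with \eqref{eq:ksred} already forces the locally-built $\mzeta^*_h$ into $\mSigma^{\rm KS'}_h$, so the ``global patching via the inf-sup'' clause is superfluous and should be removed. Second, since $\dv_h\feta_h\in\vR(\mathcal{T}_h)$ rather than $\vP_0(\mathcal{T}_h)$, folding $\vr_h$ through the projection gives $\mathbb{P}^{\vR}_h\vr_h$, not $\mathbb{P}^0_h\vr_h$; you correctly noticed this, and the $\mathbb{P}^0_h$ in the lemma statement appears to be a typo.
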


\begin{proof}
The solution of \eqref{eq:stressred}, if exists, is unique. Actually, if $(\fzeta_h,\bar{\vr}_h)\in\mSigma^{\rm KS'}_h\times \vU^{\vR}_h$ solve 
\begin{equation}
\left\{
\begin{array}{rlll}
(\mathcal{A}\mathbb{P}^0_h\fzeta_h,\mathbb{P}^0_h \feta_h)&-(\bar{\vr}_h,\dv_h\feta_h)&=0, & \forall\,\feta_h\in \mSigma_h^{\rm KS'}
\\
(\bar{\vs}_h,\dv_h\fzeta_h)&&=0&\forall\,\bar{\vs}_h\in \vU^{\vR}_h,
\end{array}
\right.
\end{equation}
then, by decomposing $\mzeta_h=\mathring{\mzeta}_h+\mzeta_h^\lrcorner$ with $\mathring{\mzeta}_h\in \mathring{\mSigma}^{\rm KS'}_h$ and $\mzeta_h^\lrcorner\in \mSigma^{\rm KS',\lrcorner}_h$, we can show that $\mzeta_h=0$ by noting easily that $\mathring{\mzeta}_h=0$ and $\mzeta_h^\lrcorner=0$. Further, $\bar{\vr}_h=0$.

The solution $\vr_h$ of \eqref{eq:ksred} can be viewed as, $\vr_h\in\vP_1(\mathcal{T}_h)$, such that, with $\fzeta_h\in \mSigma^{\rm KS'}_h$,
\begin{equation}\label{eq:kssaddle}
\left\{
\begin{array}{rlll}
(\mathcal{C}\bve_h(\vr_h),\bve_h(\vs_h)) &+(\fzeta_h,\bve_h(\vs_h))+(\dv_h\fzeta_h,\vs_h)&=(\vf_h,\vs_h),& \forall\,\vs_h\in \vP_1(\mathcal{T}_h)
\\
(\bve_h(\vr_h),\feta_h)+(\vr_h,\dv_h\feta_h)&&=0,& \forall\,\feta_h\in \mSigma_h^{\rm KS'}.
\end{array}
\right.
\end{equation}
By \eqref{eq:ndivm+=rbvep1} and \eqref{eq:rdivm+=nbvep1}, on any specific grid, the well-posedness of \eqref{eq:kssaddle} and its equivalence to \eqref{eq:ksred} are evident. 

Now, taking $\vs_h$ to be any one in $\vU^{\vR}_h\subset \vP_1(\mathcal{T}_h)$, we obtain $\dv_h\fzeta_h=\vf_h$, and further, 
$$
(\mathcal{C}\bve_h(\vr_h),\bve_h(\vs_h))+(\fzeta_h,\bve_h(\vs_h))=0,\ \ \forall\,\vs_h\in\vP_1(\mathcal{T}_h),
$$ 
and thus $\mathcal{C}\bve_h(\vr_h)=-\mathbb{P}^0_h\fzeta_h$, namely  $-\bve_h(\vr_h)=\mathcal{A}\mathbb{P}^0_h\fzeta_h=\mathbb{P}^0_h\mathcal{A}\fzeta_h.$ This way, $(\fzeta_h,\mathbb{P}^0_h\vr_h)$ solves the equation \eqref{eq:stressred}. 

On the other hand, if $(\fzeta_h,\bar{\vr}_h)$ solves \eqref{eq:stressred}, by the uniqueness, with the unique solution $\vr_h\in \vV^{\rm KS}_{h0}$ of \eqref{eq:ksred}, $\bve_h(\vr_h)=-\mathcal{A}\mathbb{P}^0_h\fzeta_h$ and $\mathcal{C}\bve_h(\vr_h)=-\mathbb{P}^0_h\fzeta_h$. The proof is completed. 
\end{proof}
\begin{remark}
Given the solution of one of \eqref{eq:ksred} and \eqref{eq:stressred}, the other can be computed cell by cell. The equivalence between the finite element schemes of primal and dual models are also studied in, e.g.,  \cite{Arnold.D;Brezzi.F1985,Marini.L1985sinum,Zhang.S2021SCM}.
\end{remark}

\subsection{Proof of Theorem \ref{thm:wellpose}}

\begin{lemma}\label{lem:inf-sup}
$\displaystyle\inf_{\vv_h\in \vU^{\vR}_h}\sup_{\mtau_h\in \mSigma_h^{\rm KS'}}\frac{(\dv_h\mtau_h,\vv_h)}{\|\mtau_h\|_{\dv,h}\|\vv_h\|_{0,\Omega}}\geqslant C$.
\end{lemma}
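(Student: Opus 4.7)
The plan is to verify the inf-sup condition as a direct consequence of two ingredients already in place: the surjectivity $\dv_h(\mSigma^{\rm KS'}_h)=\vU^{\vR}_h$, and the Poincar\'e-type inequality of Lemma \ref{lem:pifes} on the orthogonal-complement subspace $\mSigma^{\rm KS',\lrcorner}_h$. Once those are invoked, one constructs a bounded right inverse of $\dv_h:\mSigma^{\rm KS'}_h\to \vU^{\vR}_h$ and tests the supremum against it.

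First I would establish $\R(\dv_h,\mSigma^{\rm KS'}_h)=\vU^{\vR}_h$. The proof of Lemma \ref{lem:equiv} already provides surjectivity: given any $\vf_h\in \vU^{\vR}_h$, the mixed problem \eqref{eq:stressred} admits a (unique) solution $(\mzeta_h,\bar\vr_h)\in \mSigma^{\rm KS'}_h\times \vU^{\vR}_h$ with $\dv_h\mzeta_h=\vf_h$. Since $\vU^{\vR}_h=\vR(\mathcal{T}_h)$ and $\dv(\mSigma^{\rm m+})\subseteq \vR$ cellwise, the reverse inclusion $\R(\dv_h,\mSigma^{\rm KS'}_h)\subseteq \vU^{\vR}_h$ is automatic, so the two spaces coincide.

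Next, for an arbitrary $\vv_h\in \vU^{\vR}_h$, by Step~1 there exists $\mtau_h'\in \mSigma^{\rm KS'}_h$ with $\dv_h\mtau_h'=\vv_h$. Writing $\mtau_h'=\mathring{\mtau}_h+\mtau_h$ in the $\mathcal{A}$-orthogonal decomposition $\mSigma^{\rm KS'}_h=\mathring{\mSigma}^{\rm KS'}_h\oplus \mSigma^{\rm KS',\lrcorner}_h$, the remainder $\mtau_h\in \mSigma^{\rm KS',\lrcorner}_h$ still satisfies $\dv_h\mtau_h=\vv_h$. Lemma \ref{lem:pifes} then yields
\begin{equation*}
\|\mtau_h\|_{0,\Omega}\leqslant C\|\dv_h\mtau_h\|_{0,\Omega}=C\|\vv_h\|_{0,\Omega},
\end{equation*}
so $\|\mtau_h\|_{\dv,h}\leqslant \sqrt{C^2+1}\,\|\vv_h\|_{0,\Omega}$. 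Testing the supremum in the statement with this particular $\mtau_h$ gives
\begin{equation*}
\sup_{\meta_h\in \mSigma^{\rm KS'}_h}\frac{(\dv_h\meta_h,\vv_h)}{\|\meta_h\|_{\dv,h}\|\vv_h\|_{0,\Omega}}\geqslant \frac{(\vv_h,\vv_h)}{\sqrt{C^2+1}\,\|\vv_h\|_{0,\Omega}^2}=\frac{1}{\sqrt{C^2+1}},
\end{equation*}
which is the desired uniform lower bound.

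There is no real obstacle left; the two ingredients used above, surjectivity (via Lemma \ref{lem:equiv}) and the Poincar\'e estimate (Lemma \ref{lem:pifes}, which itself rests on the partial adjointness of $[(\dv_h,\mSigma^{\rm KS'}_h),(\bve_h,\vV^{\rm KS}_{h0})]$ together with Lemma \ref{lem:anotherpi}), are precisely the technical machinery already built up in Section~\ref{sec:fes} and the equivalence subsection. The only point to be careful about is that the constant $C$ in Lemma \ref{lem:pifes} is independent of the Lam\'e parameter $\lambda$, which it is because the argument passes through $\|\mtau_h\|_{\mathcal{A}}$ only in a $\lambda$-independent way on the trace-mean-zero part; this $\lambda$-robustness propagates to the inf-sup constant here.
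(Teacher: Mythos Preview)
Your proposal is correct and follows essentially the same route as the paper: surjectivity of $\dv_h:\mSigma^{\rm KS'}_h\to \vU^{\vR}_h$ via the proof of Lemma~\ref{lem:equiv}, restriction of the preimage to $\mSigma^{\rm KS',\lrcorner}_h$, and then the Poincar\'e inequality of Lemma~\ref{lem:pifes}. The paper's proof is merely terser; your added detail on the reverse inclusion $\dv_h(\mSigma^{\rm KS'}_h)\subseteq \vU^{\vR}_h$ and the explicit constant $(C^2+1)^{-1/2}$ is fine but not needed.
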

\begin{proof}
By the proof of Lemma \ref{lem:equiv}, given $\vv_h\in \vU^{\vR}_h$, there exists a $\mtau_h\in \mSigma_h^{\rm KS'}$, such that $\dv_h\mtau_h=\vv_h$. We can further set $\mtau_h\in \mSigma^{\rm KS',\lrcorner}_h$. By Lemma \ref{lem:pifes}, $\|\mtau_h\|_{0,\Omega}\leqslant C\|\vv_h\|_{0,\Omega}$. The proof of the inf-sup condition then is completed. 
\end{proof}

By Lemma \ref{lem:anotherpi} and that $\R(\dv_h,\mSigma_h^{\rm KS'})=\vU^{\vR}_h$, we can prove the lemma below. 

\begin{lemma}\label{lem:coerc}
Denote $K_h:=\{\mtau_h\in \mSigma_h^{\rm KS'}:\int_\Omega\tr(\mtau_h)=0,\ (\dv\mtau_h,\vv_h)=0,\ \forall\,\vv_h\in \vU^{\vR}_h\}$. Then, with $C$ uniform with respect to $\lambda$, 
\begin{equation}
(\mathcal{A}\mtau_h,\mtau_h)\geqslant C\|\mtau_h\|_{\dv_h}^2:=C(\|\mtau_h\|_{0,\Omega}^2+\|\dv_h\mtau_h\|_{0,\Omega}^2),\ \ \mtau_h\in K_h.
\end{equation}
\end{lemma}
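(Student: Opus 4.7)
The plan is to reduce the coercivity on $K_h$ to a combination of the $\lambda$-uniform deviatoric term coming from $\mathcal{A}$ and the Poincar\'e-type inequality of Lemma \ref{lem:anotherpi}. The hinge of the argument is a vanishing-divergence observation: any $\mtau_h\in K_h$ satisfies $\dv_h\mtau_h=0$. Indeed, by the remark preceding Lemma \ref{lem:inf-sup}, $\R(\dv_h,\mSigma_h^{\rm KS'})=\vU^{\vR}_h$, so $\dv_h\mtau_h\in \vU^{\vR}_h$; the defining property of $K_h$ then forces $\dv_h\mtau_h$ to be orthogonal to $\vU^{\vR}_h$ in $\vL^2$, hence $\dv_h\mtau_h=0$.

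First, I would invoke the explicit representation
\[
(\mathcal{A}\mtau_h,\mtau_h)=\frac{1}{2\mu}\|\mtau_h^D\|_{0,\Omega}^2+\frac{1}{4(\lambda+\mu)}\|\tr(\mtau_h)\|_{0,\Omega}^2\geqslant \frac{1}{2\mu_1}\|\mtau_h^D\|_{0,\Omega}^2,
\]
which is already $\lambda$-robust because the bad $\lambda$-dependent coefficient multiplies a nonnegative term we simply discard.

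Next, since $\mtau_h\in\mSigma^{\rm KS'}_h$ and $\int_\Omega \tr(\mtau_h)=0$, Lemma \ref{lem:anotherpi} applies and gives
\[
\|\mtau_h\|_{0,\Omega}\leqslant C(\|\mtau_h^D\|_{0,\Omega}+\|\dv_h\mtau_h\|_{0,\Omega})=C\|\mtau_h^D\|_{0,\Omega},
\]
where the last equality uses the vanishing-divergence observation above. Combining the two displayed inequalities with $\|\mtau_h\|_{\dv_h}^2=\|\mtau_h\|_{0,\Omega}^2+\|\dv_h\mtau_h\|_{0,\Omega}^2=\|\mtau_h\|_{0,\Omega}^2$ yields
\[
\|\mtau_h\|_{\dv_h}^2\leqslant C\|\mtau_h^D\|_{0,\Omega}^2\leqslant 2C\mu_1(\mathcal{A}\mtau_h,\mtau_h),
\]
which is the claimed coercivity with a constant depending only on $\mu_1$ and the shape regularity of $\mathcal{T}_h$, hence uniform in $\lambda\in(0,\infty)$.

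The only genuinely non-routine step is the vanishing-divergence reduction, since without the identification $\R(\dv_h,\mSigma^{\rm KS'}_h)=\vU^{\vR}_h$ the orthogonality in $K_h$ would only provide partial control. Once that is in hand, the two ingredients (the $\mu_1$-bound on the deviatoric part inside $\mathcal{A}$ and the discrete analogue of \eqref{eq:cfnewpi} granted by the adjoint construction of $\mSigma^{\rm KS'}_h$) combine cleanly, and no explicit dependence on $\lambda$ ever enters the estimate.
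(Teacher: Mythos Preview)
Your argument is correct and matches the approach the paper sketches in the sentence preceding the lemma (``By Lemma \ref{lem:anotherpi} and that $\R(\dv_h,\mSigma_h^{\rm KS'})=\vU^{\vR}_h$, we can prove the lemma below''): you make explicit the vanishing-divergence reduction and then combine the $\lambda$-independent lower bound $(\mathcal{A}\mtau_h,\mtau_h)\geqslant \frac{1}{2\mu_1}\|\mtau_h^D\|_{0,\Omega}^2$ with Lemma \ref{lem:anotherpi}. One small correction on the cross-reference: there is no remark preceding Lemma \ref{lem:inf-sup} stating $\R(\dv_h,\mSigma_h^{\rm KS'})=\vU^{\vR}_h$; the inclusion $\dv_h\mtau_h\in\vU^{\vR}_h$ you actually need follows already from \eqref{eq:rdivm+=nbvep1} (since $\mSigma_h^{\rm KS'}\subset\mSigma^{\rm m+}(\mathcal{T}_h)$), and the full equality is asserted in the remark in Section \ref{sec:fes} or in the line just before Lemma \ref{lem:coerc} itself.
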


\paragraph{\bf Proof of Theorem \ref{thm:wellpose}}
Denote $\hat\mSigma_h^{\rm KS'}:=\{\mtau_h\in \mSigma_h^{\rm KS'}:\int_\Omega\tr(\mtau_h)=0\}$. Then $\mSigma_h^{\rm KS'}=\hat\mSigma_h^{\rm KS'}\oplus^\perp{\rm span}\{\Id\}$, orthogonal in $\mathbb{L}^2(\Omega,\mathbb{S})$. If $(\msigma_h,\vu_h)$ solves \eqref{eq:fes}, as $\tr(\mathcal{A}\msigma_h)=\frac{1}{2(\lambda+\mu)}\tr(\msigma_h)$ pointwise, $\msigma_h\in \hat\mSigma_h^{\rm KS'}$.

Now, given $\vf$, by Lemmas \ref{lem:inf-sup} and \ref{lem:coerc}, there is a unique solution $(\msigma_h,\vu_h)\in \hat\mSigma_h^{\rm KS'}\times \vU^{\vR}_h$, such that 
\begin{equation}\label{eq:fesp}
\left\{
\begin{array}{rlll}
(\mathcal{A}\msigma_h,\mtau_h)&+(\vu_h,\dv\mtau)&=0&\forall\,\mtau_h\in \hat\mSigma_h^{\rm KS'},
\\
(\dv\msigma_h,\vv_h)&&=(\vf,\vv_h)&\forall\,\vv_h\in \vU^{\vR}_h,
\end{array}
\right.
\end{equation}
and
$$
\|\msigma_h\|_{\dv_h}+\|\vu_h\|_{0,\Omega}\cequiv \|\mathbb{P}^{\vR}_h\vf\|_{0,\Omega}. 
$$
Further, $(\mathcal{A}\msigma_h,\Id)=\frac{1}{2(\lambda+\mu)}\tr(\msigma_h)=0.$ Namely, $(\msigma_h,\vu_h)$ solves \eqref{eq:fes}, and is indeed the unique solution of \eqref{eq:fes}. The proof is completed. 
\qed

\subsection{Proof of Theorem \ref{thm:errest}}

Denote $\vf_h:=\mathbb{P}^{\vR}_h\vf$. Denote by $(\tilde\msigma_h,\tilde\vu_h)$ the solution of \eqref{eq:stressred}, denote by $\tilde\vu_h^{\rm KS}$ the solution of \eqref{eq:ksred}, and let $\vu_h^{\rm KS}\in \vV^{\rm KS}_{h0}$ be such that 
\begin{equation}\label{eq:ksfes}
(\mathcal{C}\bve_h(\vu_h^{\rm KS}),\bve_h(\vs_h))=(\vf,\vs_h),\ \forall\,\vs_h\in \vV^{\rm KS}_{h0}.
\end{equation}

Firstly, we show that
$$
\|\bve_h(\vu_h^{\rm KS})-\bve_h(\tilde\vu_h^{\rm KS})\|_{0,\Omega}+\lambda\|\dv_h\vu_h^{\rm KS}-\dv_h\tilde\vu_h^{\rm KS}\|_{0,\Omega}\leqslant Ch\|\vf\|_{0,\Omega}. 
$$
Indeed, $2\mu\|\bve_h(\vu_h^{\rm KS}-\tilde\vu_h^{\rm KS})\|_{0,\Omega}^2+\lambda\|\dv_h(\vu_h^{\rm KS}-\tilde\vu_h^{\rm KS})\|_{0,\Omega}^2= (\vf,\vu_h^{\rm KS}-\tilde\vu_h^{\rm KS}),$ therefore, $\|\bve_h(\vu_h^{\rm KS}-\tilde\vu_h^{\rm KS})\|_{0,\Omega}\leqslant Ch\|\vf\|_{0,\Omega}$. Then, by Lemma \ref{lem:kornks}, there exists a $\vw_h\in \vV^{\rm KS}_{h0}$, such that $\dv_h\vw_h=\dv_h(\vu_h^{\rm KS}-\tilde\vu_h^{\rm KS})$, and $\|\bve_h(\vw_h)\|_{0,\Omega}\leqslant \|\nabla_h \vw_h\|_{0,\Omega}\leqslant C\|\dv_h(\vu_h^{\rm KS}-\tilde\vu_h^{\rm KS})\|$, and then
$\lambda(\dv(\vu_h^{\rm KS}-\tilde\vu_h^{\rm KS}),\dv\vw_h)=(\bve_h(\vu_h^{\rm KS}-\tilde\vu_h^{\rm KS}),\bve_h(\vw_h))-(\vf,\vw_h-\mathbb{P}^{\vR}_h\vw_h),$ which leads to that $\lambda\|\dv(\vu_h^{\rm KS}-\tilde\vu_h^{\rm KS})\|_{0,\Omega}\leqslant Ch\|\vf\|_{0,\Omega}. $ If follows further then $\|\mathcal{C}(\bve(\vu_h^{\rm KS}-\tilde\vu_h^{\rm KS}))\|_{0,\Omega}\leqslant Ch\|\vf\|_{0,\Omega}$.

Secondly, we show that 
$$
\tilde\msigma_h=\msigma_h,\ \ \mbox{and}\ \  \|\tilde\vu_h-\vu_h\|_{0,\Omega}\leqslant Ch\|\mathcal{A}\msigma_h\|_{0,\Omega}. 
$$
Decompose $\msigma_h:=\msigma_h^\lrcorner+\mathring{\msigma}_h$ with $\mathring{\msigma}_h\in \mathring{\mSigma}^{\rm KS'}_h$ and $\msigma_h^\lrcorner\in \mSigma^{\rm KS',\lrcorner}_h$, and similarly $\tilde\msigma_h=\tilde\msigma_h^\lrcorner+\mathring{\tilde\msigma}_h$. Then 
$$
\mathring{\tilde\msigma}_h=\mathring{\msigma}_h=0,\ \ \mbox{and}\ \ \tilde\msigma_h^\lrcorner=\msigma_h^\lrcorner.
$$
Further, for any $\mtau_h^\lrcorner\in \mSigma^{\rm KS',\lrcorner}_h$, 
$$
(\vu_h-\tilde\vu_h,\dv\mtau_h^\lrcorner)=(\mathcal{A}\msigma_h^\lrcorner,\mtau_h^\lrcorner-\mathbb{P}^0_h\mtau_h^\lrcorner).
$$
Therefore,
$$
\|\vu_h-\tilde\vu_h\|_{0,\Omega}=\sup_{\mtau_h^\lrcorner \in \mSigma^{\rm KS',\lrcorner}_h}\frac{(\mathcal{A}\msigma_h^\lrcorner,\mtau_h^\lrcorner-\mathbb{P}^0_h\mtau_h^\lrcorner)}{\|\dv\mtau_h^\lrcorner\|_{0,\Omega}}\leqslant \|\mathcal{A}\msigma_h\|_{0,\Omega}\sup_{\mtau_h^\lrcorner \in \mSigma^{\rm KS',\lrcorner}_h}\frac{\|\mtau_h^\lrcorner-\mathbb{P}^0_h\mtau_h^\lrcorner\|_{0,\Omega}}{\|\dv\mtau_h^\lrcorner\|_{0,\Omega}}\leqslant Ch\|\mathcal{A}\msigma_h\|\leqslant Ch\|\vf\|_{0,\Omega}.
$$

Therefore, by Lemma \ref{lem:convks},
$$
\|\vu-\vu_h\|_{0,\Omega}\leqslant \|\vu-\vu_h^{\rm KS}\|_{0,\Omega}+\|\vu^{\rm KS}_h-\tilde\vu_h^{\rm KS}\|_{0,\Omega}+\|\tilde{\vu}_h-\tilde\vu_h^{\rm KS}\|_{0,\Omega}+\|\tilde{\vu}_h-\vu_h\|_{0,\Omega}\leqslant Ch(\|\vu\|_{2,\Omega}+\|\vf\|_{0,\Omega}),
$$
$$
\|\msigma-\msigma_h\|_{0,\Omega}\leqslant \|\msigma-\mathcal{C}\bve_h(\vu_h^{\rm KS})\|_{0,\Omega} + \|\mathcal{C}\bve_h(\vu_h^{\rm KS})-\mathcal{C}\bve_h(\tilde\vu_h^{\rm KS})\|_{0,\Omega} + \|\mathcal{C}\bve_h(\tilde\vu_h^{\rm KS})-\msigma_h\|_{0,\Omega}\leqslant Ch(\|\vu\|_{2,\Omega}+\|\vf\|_{0,\Omega}),
$$
and
$$
\|\dv\msigma-\dv\msigma_h\|_{0,\Omega}=\|\dv\msigma-\mathbb{P}_h^{\vR}\dv\msigma\|_{0,\Omega}\leqslant Ch\|\vf\|_{1,\Omega},\ \ \mbox{provided\ that}\ \vf\in \vH^1(\Omega). 
$$
The proof is completed. \qed

\subsection{Basis functions of $\mSigma^{\rm KS'}_h$} 
First of all, the vertices and edges of the triangulation are numbered, such that interior vertices are ahead of boundary vertices, and interior edges are ahead of boundary edges. Now, we present the basis functions of  $\mSigma^{\rm KS'}_h$ by these three steps(Steps 1-3 below).

\paragraph{\bf Step 0} We write $\left\{\vp_i,\ i=1:\#(\mathcal{N}_h);\ \vq_j,\ j=1:\#(\mathcal{E}_h)\right\}$ to be a set of basis functions of $\vV^{\rm KS}$, where $\vp_i=(p_i,0)^\top$ with $p_i$ being a basis function of $V_{h0}$ the linear element space associated with the vertex $a_i$ and $\vq_j=(0,q_j)^\top $ with $q_j$ being a basis function of $V^{\rm CR}_{h0}$ the Crouzeix-Raviart element space associated with the edge $e_j$. Note that the support of $p_i$ is the patch of one vertex $a_i$, and the support of $q_j$ is the patch of one edge $e_j$. Then $\vV^{\rm KS}_{h0}={\rm span}\left\{\vp_i,\ a_i\in\mathcal{N}_h^i;\ \vq_j,\ e_j\in\mathcal{E}_h^i\right\}$.

\paragraph{\bf Step 1} Given a cell $T$ with vertices $a_{i}$, $a_j$ and $a_k$ and edges $e_l$, $e_m$ and $e_n$, we can write
$$
\mSigma^{\rm m+}={\rm span}\left\{\mvarphi_{a_i}^T,\ \mvarphi_{a_j}^T,\ \mvarphi_{a_k}^T,\ \mpsi_{e_l}^T,\ \mpsi_{e_m}^T,\ \mpsi_{e_n}^T\right\}, 
$$
such that 
\begin{multline*}
(\dv\mvarphi_{a_r}^T,\vp_s|_T)_T+(\mvarphi_{a_r}^T,\bve(\vp_s|_T))_T=\delta_{rs},\ r,s\in \left\{i,j,k\right\},
\\ \mbox{and}\ \ (\dv\mvarphi_{a_r}^T,\vq_t|_T)_T+(\mvarphi_{a_r}^T,\bve(\vq_t|_T))_T=0,\ r\in \left\{i,j,k\right\},\ s\in\left\{l,m,n\right\},
\end{multline*}
and
\begin{multline*}
(\dv\mpsi_{e_r}^T,\vp_s|_T)_T+(\mpsi_{e_r}^T,\bve(\vp_s|_T))_T=0,\ r\in \left\{l,m,n\right\},\ s\in\left\{i,j,k\right\},
\\ 
\mbox{and}\ \  (\dv\mvarphi_{a_r}^T,\vq_t|_T)_T+(\mvarphi_{a_r}^T,\bve(\vq_t|_T))_T=\delta_{rt},\ r,t\in \left\{l,m,n\right\}.
\end{multline*}

\paragraph{\bf Step 2} Based on all $T\in\mathcal{T}_h$ and $\left\{\mvarphi_{a_i}^T,\ \mvarphi_{a_j}^T,\ \mvarphi_{a_k}^T,\ \mpsi_{e_l}^T,\ \mpsi_{e_m}^T,\ \mpsi_{e_n}^T\right\}$ associated with $T$, 
\begin{multline}\label{eq:basissigmaks}
\mSigma^{\rm KS'}_h
=\left\{\mtau_h\in \mSigma^{\rm m+}(\mathcal{T}_h):(\dv\mtau_h,\vv_h)+(\mtau_h,\bve_h(\vv_h))=0,\ \forall\,\vv_h\in\vV^{\rm KS}_{h0}\right\}
\\
=\left\{\mtau_h\in \prod_{T\in\mathcal{T}_h}{\rm span}\left\{\mvarphi_{a_i}^T,\ \mvarphi_{a_j}^T,\ \mvarphi_{a_k}^T,\ \mpsi_{e_l}^T,\ \mpsi_{e_m}^T,\ \mpsi_{e_n}^T\right\}:(\dv\mtau_h,\vv_h)+(\mtau_h,\bve_h(\vv_h))=0,\ \forall\,\vv_h\in\vV^{\rm KS}_{h0}\right\}
\\
= \Bigg\{\mtau_h\in \bigoplus_{a_i\in\mathcal{N}_h}\prod_{T:a_i\in\partial T}{\rm span}\left\{\mvarphi_{a_i}^T\right\}\oplus \bigoplus_{e_l\in\mathcal{E}_h}\prod_{T:e_l\subset\partial T}{\rm span}\left\{\mpsi_{e_l}^T\right\}:\qquad\qquad\qquad\qquad\qquad\qquad\qquad
\\
(\dv\mtau_h,\vv_h)+(\mtau_h,\bve_h(\vv_h))=0,\ \forall\,\vv_h\in {\rm span}\left\{\vp_j,\ a_j\in\mathcal{N}_h^i;\ \vq_m,\ e_m\in\mathcal{E}_h^i\right\}\Bigg\}
\\
=\left[\bigoplus_{a_i\in\mathcal{N}_h^b}\prod_{T:a_i\in\partial T}{\rm span}\left\{\mvarphi_{a_i}^T\right\}\right]
\oplus
\left[\bigoplus_{a_j\in\mathcal{N}_h^i} \left\{\mtau_h\in \prod_{T:a_j\in\partial T}{\rm span}\left\{\mvarphi_{a_j}^T\right\}:(\dv\mtau_h,\vp_j)+(\mtau_h,\bve_h(\vp_j))=0\right\}\right]
\\
\oplus \left[\bigoplus_{e_l\in\mathcal{E}_h^b}\prod_{T:e_l\subset\partial T}{\rm span}\left\{\mpsi_{e_l}^T\right\} \right]
\oplus
\left[\bigoplus_{e_m\in\mathcal{E}_h^i} \left\{\mtau_h\in \prod_{T:e_m\subset\partial T}{\rm span}\left\{\mpsi_{e_l}^T\right\}:(\dv\mtau_h,\vq_m)+(\mtau_h,\bve_h(\vq_m))=0\right\}\right];
\end{multline}
here $\mtau_h\in \prod_{T:a_i\in\partial T}{\rm span}\left\{\mvarphi^T_{a_i}\right\}$ means $\mtau_h|_T\in {\rm span}\left\{\mvarphi^T_{a_i}\right\}$ if $a_i\in \partial T$, and otherwise $\mtau_h|_T=0$.

\paragraph{\bf Step 3} The basis functions of $\mSigma^{\rm KS'}_h$ are collected in four groups:

\begin{description}
\item[Group 1] for boundary vertices  $a_i$, all $\mvarphi^i_T$ (with $a_i\in\partial T$) each is a basis function of $\mSigma^{\rm KS'}_h$;
\item[Group 2] for interior vertices  $a_i$, all functions 
$$
\mtau_h\in \prod_{T:a_i\in\partial T}{\rm span}\left\{\mvarphi^T_{a_i}\right\}\ \ \mbox{such\ that} \ \ (\dv\mtau_h,\vp_i)+(\mtau_h,\bve_h(\vp_i))=0;
$$
\item[Group 3] for boundary edges $e_l$, all $\mpsi^l_T$ (with $e_l\subset\partial T$) each is a basis function of $\mSigma^{\rm KS'}_h$;
\item[Group 4] for interior edges $e_l$, all functions 
$$
\mtau_h\in \prod_{T:e_l\subset\partial T}{\rm span}\left\{\mpsi^T_{e_l}\right\}\ \ \mbox{such\ that} \ \ (\dv\mtau_h,\vq_l)+(\mtau_h,\bve_h(\vq_l))=0.
$$
\end{description}

These four groups of basis functions are linearly independent, and they span the whole space $\mSigma^{\rm KS'}_h$. They are corresponding to the last four terms of \eqref{eq:basissigmaks} respectively. 
~\\

\paragraph{\bf Supports of the basis functions} Firstly, it is easy to know the basis functions in Groups 1 and 3 are each supported on a single cell. 

Then, for the fourth group, as an interior edge is shared by two cells, the support of the basis function is a patch by two neighbored cells. 

For the second group, if an interior vertex $a_i$ is shared by $m$ cells, then there are exactly $m-1$ linear independent basis functions in Group 2 associated with $a_i$, and the supports are each a patch by two neighboured cells. For example, c.f. Figure \ref{fig:basisinteriorvertex}, an interior vertex $A$ is shared by 6 cells $T_i$, $i=1:6$. Associated with $\vp_A$, there are 6 local functions $\mvarphi^{T_i}_A$ each on one cell. The associated basis function $\mtau_h$ is piecewise $c_i\mvarphi^{T_i}_A$, and by the criterion $\sum_{i=1}^6(\dv c_i\mvarphi^{T_i}_A,\vp_r)_{T_i}+(c_i\mvarphi^{T_i}_A,\bve(\vp_A))_{T_i}=0,$ 5 linearly independent basis functions can be determined, the supports of which are a patch by two neighbored cells, illustrated in Figure \ref{fig:basisinteriorvertex}. Note that we can choose any 5 favored patches by neighbored cells of the 6 patches to be the supports of the linearly independent basis functions.

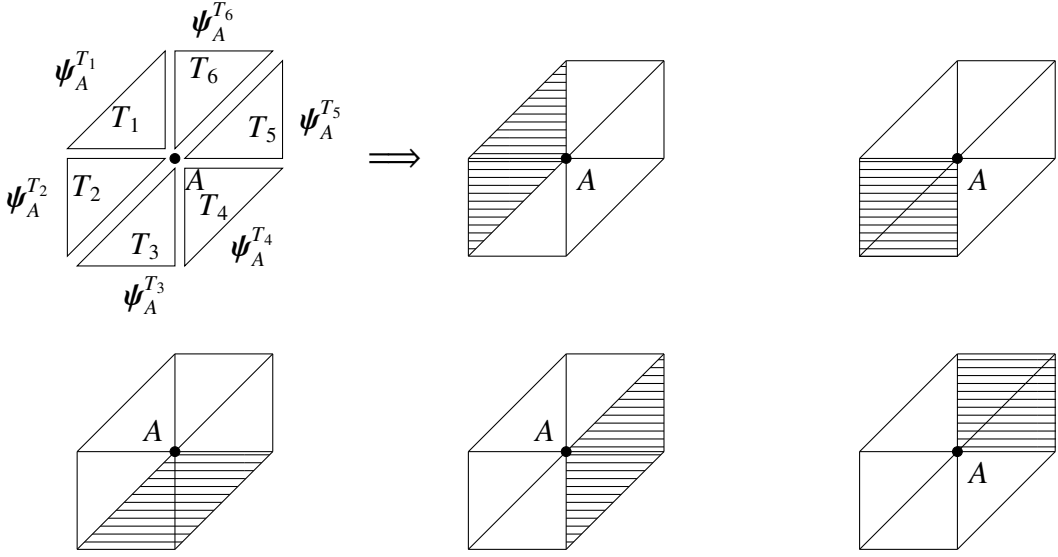
\begin{figure}
\begin{tikzpicture}[scale=1.3]
\path 	
coordinate (a1mb0) at (0.9,0)
coordinate (a1pb0m) at (1.1,-0.1)
coordinate (a1pb0p) at (1.1,0.1)
coordinate (a1mb0m) at (0.9,-0.1)
coordinate (a1mb0p) at (0.9,0.1)
coordinate (a2pb0p) at (2.1,0.1)
coordinate (a2pb0m) at (2.1,-0.1)
coordinate (a2mb0p) at (1.9,0.1)
coordinate (a2mb0m) at (1.9,-0.1)
coordinate (a1b1m) at (1,0.9)
coordinate (a1mb1) at (0.9,1)
coordinate (a1pb1p) at (1.1,1.1)
coordinate (a1pb1m) at (1.1,0.9)
coordinate (a1mb1p) at (0.9,1.1)
coordinate (a1mb1m) at (0.9,0.9)
coordinate (a2b1m) at (2,0.9)
coordinate (a2pb1p) at (2.1,1.1)
coordinate (a2pb1m) at (2.1,0.9)
coordinate (a2mb1p) at (1.9,1.1)
coordinate (a2mb1m) at (1.9,0.9)
coordinate (a3pb1p) at (3.1,1.1)
coordinate (a3pb1m) at (3.1,0.9)
coordinate (a3mb1p) at (2.9,1.1)
coordinate (a3mb1m) at (2.9,0.9)
coordinate (a1mb2) at (0.9,2)
coordinate (a1pb2p) at (1.1,2.1)
coordinate (a1pb2m) at (1.1,1.9)
coordinate (a1mb2p) at (0.9,2.1)
coordinate (a1mb2m) at (0.9,1.9)
coordinate (a2pb2) at (2.1,2)
coordinate (a2b2p) at (2,2.1)
coordinate (a2b2m) at (2,1.9)
coordinate (a2mb2) at (1.9,2)
coordinate (a2pb2p) at (2.1,2.1)
coordinate (a2pb2m) at (2.1,1.9)
coordinate (a2mb2p) at (1.9,2.1)
coordinate (a2mb2m) at (1.9,1.9)
coordinate (a3pb2) at (3.1,2)
coordinate (a3pb2p) at (3.1,2.1)
coordinate (a3pb2m) at (3.1,1.9)
coordinate (a3mb2p) at (2.9,2.1)
coordinate (a3mb2m) at (2.9,1.9)
coordinate (a2b3p) at (2,3.1)
coordinate (a2pb3p) at (2.1,3.1)
coordinate (a2pb3m) at (2.1,2.9)
coordinate (a2mb3p) at (1.9,3.1)
coordinate (a2mb3m) at (1.9,2.9)
coordinate (a3pb3) at (3.1,3)
coordinate (a3b3p) at (3,3.1)
coordinate (a3pb3p) at (3.1,3.1)
coordinate (a3pb3m) at (3.1,2.9)
coordinate (a3mb3p) at (2.9,3.1)
coordinate (a3mb3m) at (2.9,2.9);

\draw[line width=.4pt]  (a1mb1) -- (a1mb2) -- (a2mb2) -- cycle;
\draw[line width=.4pt]  (a1b1m) -- (a2b2m) -- (a2b1m) -- cycle;
\draw[line width=.4pt]  (a1mb2p) -- (a2mb2p) -- (a2mb3p) -- cycle;
\draw[line width=.4pt]  (a2b2p) -- (a2b3p) -- (a3b3p) -- cycle;
\draw[line width=.4pt]  (a2pb2) -- (a3pb2) -- (a3pb3) -- cycle;
\draw[line width=.4pt]  (a2pb2m) -- (a3pb2m) -- (a2pb1m) -- cycle;

\node at(1.5,2.4){$T_1$};

\node at(1,2.9){$\mpsi^{T_1}_A$};

\node at(2.3,2.9) {$T_6$};

\node at(2.4,3.4){$\mpsi_A^{T_6}$};

\node at(2.9,2.3) {$T_5$};
\node at(3.5,2.4){$\mpsi_A^{T_5}$};

\node at(2.4,1.5){$T_4$};
\node at(2.8,1.1){$\mpsi_A^{T_4}$};

\node at(1.7,1.1){$T_3$};
\node at(1.7,0.6){$\mpsi_A^{T_3}$};

\node at(1.1,1.7){$T_2$};
\node at(0.5,1.6){$\mpsi_A^{T_2}$};

\node at(4.25,2){\large$\Longrightarrow$};

\draw[fill] (2,2) circle [radius=0.05];
\node[below right] at (2,2) {$A$};

%%%%%%%%%%%%

\path 	coordinate (aa2b1) at (5,1)
coordinate (aa3b1) at (6,1)
coordinate (aa4b2) at (7,2)
coordinate (aa4b3) at (7,3)
coordinate (aa3b3) at (6,3)
coordinate (aa2b2) at (5,2)
coordinate (aa3b2) at (6,2);

\draw[line width=.4pt]  (aa2b2) -- (aa3b3);
\draw[line width=.4pt]  (aa2b1) -- (aa4b3);
\draw[line width=.4pt]  (aa3b1) -- (aa4b2);
\draw[line width=.4pt]  (aa3b3) -- (aa4b3);
\draw[line width=.4pt]  (aa2b2) -- (aa4b2);
\draw[line width=.4pt]  (aa2b1) -- (aa3b1);
\draw[line width=.4pt]  (aa2b1) -- (aa2b2);
\draw[line width=.4pt]  (aa3b1) -- (aa3b3);
\draw[line width=.4pt]  (aa4b2) -- (aa4b3);

\path 	coordinate (ba2b1) at (9,1)
coordinate (ba3b1) at (10,1)
coordinate (ba4b2) at (11,2)
coordinate (ba4b3) at (11,3)
coordinate (ba3b3) at (10,3)
coordinate (ba2b2) at (9,2)
coordinate (ba3b2) at (10,2);

\draw[line width=.4pt]  (ba2b2) -- (ba3b3);
\draw[line width=.4pt]  (ba2b1) -- (ba4b3);
\draw[line width=.4pt]  (ba3b1) -- (ba4b2);
\draw[line width=.4pt]  (ba3b3) -- (ba4b3);
\draw[line width=.4pt]  (ba2b2) -- (ba4b2);
\draw[line width=.4pt]  (ba2b1) -- (ba3b1);
\draw[line width=.4pt]  (ba2b1) -- (ba2b2);
\draw[line width=.4pt]  (ba3b1) -- (ba3b3);
\draw[line width=.4pt]  (ba4b2) -- (ba4b3);

%%%%%%%%%%%%%

\path 	coordinate (ca2b1) at (1,-2)
coordinate (ca3b1) at (2,-2)
coordinate (ca4b2) at (3,-1)
coordinate (ca4b3) at (3,0)
coordinate (ca3b3) at (2,0)
coordinate (ca2b2) at (1,-1)
coordinate (ca3b2) at (2,-1);

\draw[line width=.4pt]  (ca2b2) -- (ca3b3);
\draw[line width=.4pt]  (ca2b1) -- (ca4b3);
\draw[line width=.4pt]  (ca3b1) -- (ca4b2);
\draw[line width=.4pt]  (ca3b3) -- (ca4b3);
\draw[line width=.4pt]  (ca2b2) -- (ca4b2);
\draw[line width=.4pt]  (ca2b1) -- (ca3b1);
\draw[line width=.4pt]  (ca2b1) -- (ca2b2);
\draw[line width=.4pt]  (ca3b1) -- (ca3b3);
\draw[line width=.4pt]  (ca4b2) -- (ca4b3);

\path 	coordinate (da2b1) at (5,-2)
coordinate (da3b1) at (6,-2)
coordinate (da4b2) at (7,-1)
coordinate (da4b3) at (7,0)
coordinate (da3b3) at (6,0)
coordinate (da2b2) at (5,-1)
coordinate (da3b2) at (6,-1);

\draw[line width=.4pt]  (da2b2) -- (da3b3);
\draw[line width=.4pt]  (da2b1) -- (da4b3);
\draw[line width=.4pt]  (da3b1) -- (da4b2);
\draw[line width=.4pt]  (da3b3) -- (da4b3);
\draw[line width=.4pt]  (da2b2) -- (da4b2);
\draw[line width=.4pt]  (da2b1) -- (da3b1);
\draw[line width=.4pt]  (da2b1) -- (da2b2);
\draw[line width=.4pt]  (da3b1) -- (da3b3);
\draw[line width=.4pt]  (da4b2) -- (da4b3);

\path 	coordinate (ea2b1) at (9,-2)
coordinate (ea3b1) at (10,-2)
coordinate (ea4b2) at (11,-1)
coordinate (ea4b3) at (11,0)
coordinate (ea3b3) at (10,0)
coordinate (ea2b2) at (9,-1)
coordinate (ea3b2) at (10,-1);

\draw[line width=.4pt]  (ea2b2) -- (ea3b3);
\draw[line width=.4pt]  (ea2b1) -- (ea4b3);
\draw[line width=.4pt]  (ea3b1) -- (ea4b2);
\draw[line width=.4pt]  (ea3b3) -- (ea4b3);
\draw[line width=.4pt]  (ea2b2) -- (ea4b2);
\draw[line width=.4pt]  (ea2b1) -- (ea3b1);
\draw[line width=.4pt]  (ea2b1) -- (ea2b2);
\draw[line width=.4pt]  (ea3b1) -- (ea3b3);
\draw[line width=.4pt]  (ea4b2) -- (ea4b3);

\draw[fill] (aa3b2) circle [radius=0.05];
\node[below right] at (aa3b2) {$A$};
\draw[fill] (ba3b2) circle [radius=0.05];
\node[below right] at (ba3b2) {$A$};
\draw[fill] (ca3b2) circle [radius=0.05];
\node[above left] at (ca3b2) {$A$};
\draw[fill] (da3b2) circle [radius=0.05];
\node[above left] at (da3b2) {$A$};
\draw[fill] (ea3b2) circle [radius=0.05];
\node[below right] at (ea3b2) {$A$};

\begin{scope}
%\clip  (a1b1)--(a2b2)--(a3b4)--cycle;
\fill[pattern=horizontal lines] (aa2b1)--(aa3b2)--(aa3b3)--(aa2b2)--(aa2b1);
\fill[pattern=horizontal lines] (ba2b1)--(ba3b1)--(ba3b2)--(ba2b2)--(ba2b1);
\fill[pattern=horizontal lines] (ca2b1)--(ca3b1)--(ca4b2)--(ca3b2)--(ca2b1);
\fill[pattern=horizontal lines] (da3b1)--(da4b2)--(da4b3)--(da3b2)--(da3b1);
\fill[pattern=horizontal lines] (ea3b2)--(ea4b2)--(ea4b3)--(ea3b3)--(ea3b2);
\end{scope}

\end{tikzpicture}

\caption{As $A$ is shared by six triangles, five basis functions are associated with the interior vertex $A$. The shadowed parts are respectively the supports of the basis functions.}\label{fig:basisinteriorvertex}
\end{figure}
~\\

\paragraph{\bf Size of stiffness matrices} Given a triangulation, the dimension of $\mSigma^{\rm KS'}_h$ is $6\#(\mathcal{T}_h)-(\#(\mathcal{E}_h^i)+\#(\mathcal{N}_h^i))$, which is asymptotically $8$ times of $\#(\mathcal{N}_h^i)$. The size of the global stiffness matrix is $(9\#(\mathcal{T}_h)-(\#(\mathcal{E}_h^i)+\#(\mathcal{N}_h^i)))\times (9\#(\mathcal{T}_h)-(\#(\mathcal{E}_h^i)+\#(\mathcal{N}_h^i)))$. A cell $T$ is covered by the supports of no more than 9 basis functions of $\mSigma^{\rm KS'}_h$, which are:
\begin{itemize}
\item 3 in Group 4, and no more than 6 in Group 2, if $T$ is an interior cell;
\item 1 in Group 1, 3 in Group 4, and no more than 4 in Group 2, if $T$ has 1 boundary vertex;
\item 2 in Group 1, 1 in Group 3, 2 in Group 4, and no more than 2 in Group 2, if $T$ has 1 boundary edge. 
\end{itemize}
Thus, a cell $T$ is covered by the supports of no more than 12 basis functions, including 3 of $\vU^{\vR}_h$. Therefore, the size of the cell-wise stiffness matrix is not bigger than $12\times 12$.

\section{Investigation on finite element schemes of theoretically lowest degrees}

\label{sec:lowest}

\subsection{A lowest-degree robust Hellinger-Reissner finite element scheme}

We consider the finite element problem: find $(\msigma^{\rm m}_h,\vu^{\rm m}_h)\in \mSigma^{\rm rKS'}_h\times \vU^{\rm m}_h:=\vP_0(\mathcal{T}_h)$, such that 
\begin{equation}\label{eq:feslsto}
\left\{
\begin{array}{rlll}
(\mathcal{A}\msigma^{\rm m}_h,\mtau_h)&+(\vu_h^{\rm m},\dv_h\mtau_h)&=0&\forall\,\mtau_h\in \mSigma^{\rm rKS'}_h,
\\
(\dv_h\msigma_h^{\rm m},\vv_h)&&=(\vf,\vv_h)&\forall\,\vv_h\in \vU^{\rm m}_h. 
\end{array}
\right.
\end{equation}

The wellposedness of \eqref{eq:feslsto} is straightforward to obtain. 

\begin{theorem}
Let $(\msigma,\vu)$ and $(\msigma_h^{\rm m},\vu^{\rm m}_h)$ be the solutions of \eqref{eq:feslsto}, respectively. Assume $\vf\in\vH^1(\Omega)$, and $\vu\in\vH^2(\Omega)$. Then
$$
\|\dv_h\msigma_h^{\rm m}-\dv\msigma\|_{0,\Omega}\leqslant Ch\|\vf\|_{1,\Omega},
$$ 
$$
\|\msigma_h^{\rm m}-\msigma\|_{0,\Omega}\leqslant Ch(\|\vu\|_{2,\Omega}+\|\vf\|_{1,\Omega}),
$$
and
$$
\|\vu^m_h-\vu\|_{0,\Omega}\leqslant  Ch(\|\vu\|_{2,\Omega}+\|\vf\|_{1,\Omega}).
$$
\end{theorem}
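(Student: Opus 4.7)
My plan is to parallel the proof of Theorem~\ref{thm:errest}, substituting the $\vV^{\rm rKS}_{h0}$ ingredients for the Kouhia--Stenberg ones. The decisive simplification, which I would exploit first, is that $\R(\dv_h,\mSigma^{\rm rKS'}_h)=\vP_0(\mathcal{T}_h)=\vU^{\rm m}_h$. Consequently $\dv_h\msigma_h^{\rm m}$ is piecewise constant, and testing the second equation of \eqref{eq:feslsto} against $\vU^{\rm m}_h$ forces $\dv_h\msigma_h^{\rm m}=\mathbb{P}^0_h\vf$. The first estimate then follows at once:
$$\|\dv_h\msigma_h^{\rm m}-\dv\msigma\|_{0,\Omega}=\|\mathbb{P}^0_h\vf-\vf\|_{0,\Omega}\leqslant Ch\|\vf\|_{1,\Omega}.$$
Wellposedness of \eqref{eq:feslsto} follows by the inf-sup/coercivity framework of Theorem~\ref{thm:wellpose}: the range identity above supplies the inf-sup condition through the Poincar\'e inequality on $\mSigma^{\rm rKS',\lrcorner}_h$ proved in Section~\ref{sec:fes}, and Lemma~\ref{lem:anotherpi} (which applies to $\mSigma^{\rm rKS'}_h\subset\mSigma^{\rm KS'}_h$) yields $\lambda$-robust coercivity on the divergence-constrained subspace.

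The central step is a primal-mixed equivalence analogous to Lemma~\ref{lem:equiv}. Since $[(\dv_h,\mSigma^{\rm rKS'}_h),(\bve_h,\vV^{\rm rKS}_{h0})]$ is partially adjoint based on $[(\dv_h,\mSigma^{\rm m}(\mathcal{T}_h)),(\bve_h,\vV^{\bve,\rm m}(\mathcal{T}_h))]$, and since $\R(\dv,\mSigma^{\rm m}(T))=\vP_0(T)=\N(\bve,\vV^{\bve,\rm m}(T))$ by \eqref{eq:rker}--\eqref{eq:kerr}, I expect the argument of Lemma~\ref{lem:equiv} to transfer verbatim: solving \eqref{eq:feslsto} with data $\vf$ is equivalent to the $\vV^{\rm rKS}_{h0}$ Navier--Lam\'e problem with data $\mathbb{P}^0_h\vf$, under the identification $\mathbb{P}^0_h\msigma_h^{\rm m}=\mathcal{C}\bve_h(\vw_h)$ and $\vu_h^{\rm m}=\mathbb{P}^0_h\vw_h$, where $\vw_h\in\vV^{\rm rKS}_{h0}$ is the primal solution.

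With this equivalence in hand I would invoke the $\lambda$-robust $O(h)$ energy-norm estimate for the $\vV^{\rm rKS}_{h0}$ Navier--Lam\'e scheme established in the next subsection and, combining it with the data-perturbation argument following \eqref{eq:ksfes} in the proof of Theorem~\ref{thm:errest}, obtain $\|\msigma-\mathcal{C}\bve_h(\vw_h)\|_{0,\Omega}\leqslant Ch(\|\vu\|_{2,\Omega}+\|\vf\|_{1,\Omega})$; the complementary term $\|\msigma_h^{\rm m}-\mathbb{P}^0_h\msigma_h^{\rm m}\|_{0,\Omega}$ is controlled by $Ch\|\vf\|_{1,\Omega}$ via the first estimate and the Poincar\'e inequality on $\mSigma^{\rm rKS',\lrcorner}_h$, and the $\msigma$-bound then follows by triangle inequality. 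For the displacement, $\vu_h^{\rm m}=\mathbb{P}^0_h\vw_h$ gives
$$\|\vu-\vu_h^{\rm m}\|_{0,\Omega}\leqslant\|\vu-\mathbb{P}^0_h\vu\|_{0,\Omega}+\|\vu-\vw_h\|_{0,\Omega},$$
both being $Ch(\|\vu\|_{2,\Omega}+\|\vf\|_{1,\Omega})$ by standard approximation together with the companion $L^2$ estimate for the primal scheme. The main obstacle I anticipate is precisely that companion estimate: because the shape function space \eqref{eq:lowestshapeNL} does not contain the full linear vector polynomials, $O(h)$ approximation of a smooth displacement must be realised in the $\bve$-seminorm rather than in $\nabla$, and $\lambda$-robustness requires a divergence-correction step mimicking the passage between \eqref{eq:ksfes} and \eqref{eq:ksred}; once this is in place, the present theorem follows by the structural reductions outlined above.
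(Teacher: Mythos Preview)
Your first estimate is correct and matches the paper. The overall strategy, however, is circular: you propose to invoke Theorem~\ref{thm:convmNL} (the $\vV^{\rm rKS}_{h0}$ Navier--Lam\'e convergence) to prove the present theorem, but in the paper that theorem is proved \emph{using} the present one. The proof of Theorem~\ref{thm:convmNL} compares the auxiliary problem there to $(\msigma_h^{\rm m},\vu_h^{\rm m})$ and invokes the very estimates you are trying to establish. The obstacle you identify at the end --- that $\vV^{\bve,\rm m}$ lacks full linear polynomials, so a direct interpolation/Korn argument for $\vV^{\rm rKS}_{h0}$ is unavailable --- is real, and the paper never overcomes it directly; it bypasses it.

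The paper's actual route is much shorter and avoids $\vV^{\rm rKS}_{h0}$ entirely. The key observation is that the solution $(\msigma_h,\vu_h)$ of the larger scheme \eqref{eq:fes} already satisfies the equations \eqref{eq:feslsto}, simply because $\mSigma^{\rm rKS'}_h\subset\mSigma^{\rm KS'}_h$ and $\vU^{\rm m}_h\subset\vU^{\vR}_h$. Both $\msigma_h$ and $\msigma_h^{\rm m}$ are $\mathcal{A}$-orthogonal to $\mathring{\mSigma}^{\rm KS'}_h$ (test the first equation against kernel elements), so $\msigma_h^{\rm m}-\msigma_h\in\mSigma^{\rm KS',\lrcorner}_h$ and Lemma~\ref{lem:pifes} gives
\[
\|\msigma_h^{\rm m}-\msigma_h\|_{0,\Omega}\leqslant C\|\dv_h(\msigma_h^{\rm m}-\msigma_h)\|_{0,\Omega}=C\|\mathbb{P}^0_h\vf-\mathbb{P}^{\vR}_h\vf\|_{0,\Omega}\leqslant Ch\|\vf\|_{1,\Omega}.
\]
Combined with Theorem~\ref{thm:errest} this gives the $\msigma$-estimate. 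For the displacement, subtracting the first equations and using that $\dv_h\mtau_h\in\vP_0(\mathcal{T}_h)$ for $\mtau_h\in\mSigma^{\rm rKS'}_h$ yields $(\vu_h^{\rm m}-\mathbb{P}^0_h\vu_h,\dv_h\mtau_h)=-(\mathcal{A}(\msigma_h^{\rm m}-\msigma_h),\mtau_h)$, whence the inf-sup condition on $\mSigma^{\rm rKS'}_h$ gives $\|\vu_h^{\rm m}-\mathbb{P}^0_h\vu_h\|_{0,\Omega}\leqslant C\|\msigma_h^{\rm m}-\msigma_h\|_{0,\Omega}$; the triangle inequality and Theorem~\ref{thm:errest} finish the argument. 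No primal--mixed equivalence with $\vV^{\rm rKS}_{h0}$ is needed.
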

\begin{proof}
Firstly, similar to the proof of Theorem \ref{thm:errest}, we can show $\msigma_h^{\rm m}\in \mSigma^{\rm rKS',\lrcorner}_h$. Note that $(\msigma_h,\vu_h)$, the solution of \eqref{eq:fes}, satisfies \eqref{eq:feslsto}. It follows then 
$$
\|\msigma_h^{\rm m}-\msigma_h\|_{0,\Omega}\leqslant C\|\dv_h(\msigma_h^{\rm m}-\msigma_h)\|\ \ \mbox{and}\ \ \|\mathbb{P}^0_h\vu_h-\vu^{\rm m}_h\|_{0,\Omega}\leqslant C\|\msigma^{\rm m}_h-\msigma_h\|_{0,\Omega}.
$$
Therefore, provided $\vf\in\vH^1(\Omega)$, 
$$
\|\dv_h\msigma_h^{\rm m}-\dv\msigma\|_{0,\Omega}\leqslant Ch\|\vf\|_{1,\Omega},
$$ 
$$
\|\msigma_h^{\rm m}-\msigma\|_{0,\Omega}\leqslant Ch(\|\vu\|_{2,\Omega}+\|\vf\|_{1,\Omega}),
$$
and
$$
\|\vu^{\rm m}_h-\vu\|_{0,\Omega}\leqslant \|\mathbb{P}^0_h\vu_h-\vu^0_h\|_{0,\Omega}+\|\mathbb{P}^0_h\vu_h-\mathbb{P}^0_h\vu\|_{0,\Omega}+\|\mathbb{P}^0_h\vu-\vu\|_{0,\Omega}\leqslant Ch(\|\vu\|_{2,\Omega}+\|\vf\|_{1,\Omega}).
$$
The proof is completed.  
\end{proof}

\subsection{A lowest-degree robust Navier-Lam\'e finite element scheme}

For the Navier-Lam\'e equation, we consider the finite element problem: find $\vu_h^{\bve}\in \vV^{\rm rKS}_{h0}$, such that 
\begin{equation}\label{eq:minimalNL}
(\mathcal{C}\bve_h(\vu_h^{\bve}),\bve_h(\vv_h))=(\vf,\mathbb{P}^0_h\vv_h),\ \ \forall\,\vv_h\in \vV^{\rm rKS}_{h0}. 
\end{equation}

\begin{theorem}\label{thm:convmNL}
Let $\vu$ and $\vu^{\bve}_h$ be the solutions of \eqref{eq:primalmodel} and \eqref{eq:minimalNL}, respectively. Assume $\vf\in \vH^1(\Omega)$ and $\vu\in \vH^2(\Omega)$. Then 
$$
\|\bve_h(\vu^{\bve}_h)-\bve(\vu)\|_{0,\Omega}+\|\mathcal{C}(\bve_h(\vu^{\bve}_h)-\bve(\vu))\|_{0,\Omega}\leqslant Ch(\|\vu\|_{2,\Omega}+\|\vf\|_{1,\Omega}).
$$
\end{theorem}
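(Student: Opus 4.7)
The plan is to exploit an equivalence between \eqref{eq:minimalNL} and the minimal Hellinger--Reissner scheme \eqref{eq:feslsto}, analogous to Lemma \ref{lem:equiv}, and then transfer the convergence estimate just established for the latter. More precisely, I will show that $\mathcal{C}\bve_h(\vu_h^{\bve})=\pm\mathbb{P}^0_h\msigma_h^{\rm m}$, where $(\msigma_h^{\rm m},\vu_h^{\rm m})$ solves \eqref{eq:feslsto} driven by the same $\vf$, and then push the HR error bound through the $\mathbb{P}^0_h$--projection.

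For the equivalence, following the template of \eqref{eq:kssaddle}, I introduce the auxiliary saddle-point problem posed on the ambient spaces of the base operator pair $[(\dv_h,\mSigma^{\rm m}(\mathcal{T}_h)),(\bve_h,\vV^{\bve,\rm m}(\mathcal{T}_h))]$: find $(\vr_h,\fzeta_h)\in\vV^{\bve,\rm m}(\mathcal{T}_h)\times\mSigma^{\rm rKS'}_h$ such that
\begin{equation*}
\left\{
\begin{aligned}
(\mathcal{C}\bve_h(\vr_h),\bve_h(\vs_h))+(\fzeta_h,\bve_h(\vs_h))+(\dv_h\fzeta_h,\vs_h)&=(\mathbb{P}^0_h\vf,\vs_h),\\
(\bve_h(\vr_h),\feta_h)+(\vr_h,\dv_h\feta_h)&=0,
\end{aligned}
\right.
\end{equation*}
for all $\vs_h\in\vV^{\bve,\rm m}(\mathcal{T}_h)$ and $\feta_h\in\mSigma^{\rm rKS'}_h$. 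The second equation, by the very definition of $\vV^{\rm rKS}_{h0}$, forces $\vr_h\in \vV^{\rm rKS}_{h0}$; testing the first equation against $\vs_h\in \vV^{\rm rKS}_{h0}$ then, by the same adjoint relation, kills the two $\fzeta_h$--terms, leaving exactly \eqref{eq:minimalNL} for $\vr_h$, so $\vr_h=\vu_h^{\bve}$ by uniqueness. Testing the first equation against $\vs_h\in\mathcal{N}(\bve_h,\vV^{\bve,\rm m}(\mathcal{T}_h))=\vP_0(\mathcal{T}_h)$ yields $\dv_h\fzeta_h=\mathbb{P}^0_h\vf$, and testing with arbitrary $\vs_h\in\vV^{\bve,\rm m}(\mathcal{T}_h)$, together with the surjectivity of $\bve_h$ from $\vV^{\bve,\rm m}(T)$ onto the space of symmetric constant $2\times 2$ tensors (see \eqref{eq:rker}--\eqref{eq:kerr}), yields $\mathcal{C}\bve_h(\vr_h)+\mathbb{P}^0_h\fzeta_h=0$. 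Matching with \eqref{eq:feslsto} identifies $\fzeta_h$ with $\msigma_h^{\rm m}$ up to sign and produces the desired identity $\mathcal{C}\bve_h(\vu_h^{\bve})=-\mathbb{P}^0_h\msigma_h^{\rm m}$.

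With the equivalence in hand, I split
\begin{equation*}
\|\mathcal{C}\bve_h(\vu_h^{\bve})-\msigma\|_{0,\Omega}\leqslant \|\msigma_h^{\rm m}-\mathbb{P}^0_h\msigma_h^{\rm m}\|_{0,\Omega}+\|\msigma_h^{\rm m}-\msigma\|_{0,\Omega}.
\end{equation*}
The second term is bounded by $Ch(\|\vu\|_{2,\Omega}+\|\vf\|_{1,\Omega})$ by the preceding error theorem for the minimal HR scheme. For the first, the cellwise observation that every $\mtau\in\mSigma^{\rm m}(T)$ satisfies $\|\mtau-\mathbb{P}^0_T\mtau\|_{0,T}\leqslant Ch_T\|\dv\mtau\|_{0,T}$, because its non-constant part is spanned by two basis functions whose divergences are $(1,0)^\top$ and $(0,1)^\top$, combined with the stability bound $\|\dv_h\msigma_h^{\rm m}\|_{0,\Omega}\leqslant \|\vf\|_{0,\Omega}+Ch\|\vf\|_{1,\Omega}$, gives $Ch(\|\vf\|_{0,\Omega}+h\|\vf\|_{1,\Omega})$. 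The $\bve_h$--term in the theorem then follows from the first, since $\bve_h(\vu_h^{\bve})=\mathcal{A}\mathcal{C}\bve_h(\vu_h^{\bve})$ and $\bve(\vu)=\mathcal{A}\msigma$, and the compliance tensor $\mathcal{A}$ is bounded on $\mathbb{L}^2(\Omega;\mathbb{S})$ uniformly in $\lambda$; this gives the $\lambda$--robust estimate asserted.

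The principal obstacle is the equivalence lemma: one must verify that the saddle-point formulation is well posed and that its decoupling proceeds cleanly, without spurious constraints that would spoil the identification $\vr_h=\vu_h^{\bve}$ and $\fzeta_h=\pm\msigma_h^{\rm m}$. The key algebraic facts --- the surjectivity of $\bve_h$ onto symmetric constant tensors and the closed adjoint structure of the base operator pair --- were already established in Section \ref{sec:fes}; what remains is careful bookkeeping, in particular tracking the sign conventions consistently.
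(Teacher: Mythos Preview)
Your equivalence argument is almost right, but the identification ``$\fzeta_h=\pm\msigma_h^{\rm m}$'' does not go through, and this is the crux of the matter. From your saddle-point problem you correctly extract
\[
\dv_h\fzeta_h=\mathbb{P}^0_h\vf,\qquad \mathbb{P}^0_h\fzeta_h=-\mathcal{C}\bve_h(\vu_h^{\bve}),\qquad \vr_h=\vu_h^{\bve}.
\]
Substituting $\bve_h(\vr_h)=-\mathcal{A}\mathbb{P}^0_h\fzeta_h$ into the second saddle equation gives, for all $\feta_h\in\mSigma^{\rm rKS'}_h$,
\[
(\mathcal{A}\,\mathbb{P}^0_h\fzeta_h,\feta_h)-(\mathbb{P}^0_h\vr_h,\dv_h\feta_h)=0,
\]
which is \emph{not} the first equation of \eqref{eq:feslsto}: the latter reads $(\mathcal{A}\msigma_h^{\rm m},\feta_h)+(\vu_h^{\rm m},\dv_h\feta_h)=0$ with no projection on the stress. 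Since $\fzeta_h-\mathbb{P}^0_h\fzeta_h$ lies in the span of the trace-free shape functions $\left(\begin{smallmatrix}0&x\\x&0\end{smallmatrix}\right)$, $\left(\begin{smallmatrix}0&y\\y&0\end{smallmatrix}\right)$, one has $\mathcal{A}\fzeta_h\neq\mathcal{A}\mathbb{P}^0_h\fzeta_h$ whenever $\dv_h\fzeta_h\neq0$, so in general $\fzeta_h\neq\msigma_h^{\rm m}$ and your key identity $\mathcal{C}\bve_h(\vu_h^{\bve})=-\mathbb{P}^0_h\msigma_h^{\rm m}$ is false.

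What your $\fzeta_h$ actually solves is the paper's \emph{auxiliary} Hellinger--Reissner problem (the one with $\mathbb{P}^0_h$ inserted on both stress arguments), whose solution the paper denotes $(\tilde\msigma_h,\tilde\vu_h)$. The paper then carries out precisely the step you are missing: it subtracts the auxiliary system from \eqref{eq:feslsto}, obtaining a well-posed perturbed system with right-hand side $(\mathcal{A}(\msigma_h^{\rm m}-\mathbb{P}^0_h\msigma_h^{\rm m}),\feta_h)$, and deduces $\|\tilde\msigma_h-\msigma_h^{\rm m}\|_{\dv_h}\leqslant C\|\mathcal{A}(\msigma_h^{\rm m}-\mathbb{P}^0_h\msigma_h^{\rm m})\|_{0,\Omega}\leqslant Ch\|\vf\|_{0,\Omega}$. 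Once this comparison is in hand, your splitting (with $\tilde\msigma_h$ in place of $\msigma_h^{\rm m}$) and your bound on $\|\mtau-\mathbb{P}^0_h\mtau\|$ finish the proof exactly as you wrote. So the overall strategy is the paper's; you are only missing this one perturbation estimate that bridges the projected and unprojected HR schemes.
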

\begin{proof}
We firstly introduce an auxiliary problem: find $(\tilde{\msigma}_h,\tilde{\vu}_h)\in \mSigma^{\rm rKS'}_h\times \vU^{\rm m}_h$, such that 
\begin{equation}%\label{eq:feslsto}
\left\{
\begin{array}{rlll}
(\mathcal{A}\mathbb{P}^0_h\tilde\msigma_h,\mathbb{P}^0_h\mtau_h)&+(\tilde\vu_h,\dv\mtau)&=0&\forall\,\mtau_h\in \mSigma^{\rm rKS'}_h,
\\
(\dv\tilde\msigma_h,\vv_h)&&=(\vf,\vv_h)&\forall\,\vv_h\in \vU^{\rm m}_h. 
\end{array}
\right.
\end{equation}
Then by the virtue of Lemma \ref{lem:equiv}, we can show that $\mathcal{C}\bve_h(\vu_h^{\bve})=\mathbb{P}^0_h\tilde\msigma_h$ and $\tilde{\vu}_h=\mathbb{P}^0_h\vu^{\bve}_h$.

At the same time,
\begin{equation}%\label{eq:feslsto}
\left\{
\begin{array}{rlll}
(\mathcal{A}\mathbb{P}^0_h(\tilde\msigma_h-\msigma^{\rm m}_h),\mtau_h)&+(\tilde\vu_h-\vu_h^{\rm m},\dv\mtau)&=(\mathcal{A}(\msigma^{\rm m}_h-\mathbb{P}^0_h\msigma^{\rm m}_h),\mtau_h)&\forall\,\mtau_h\in \mSigma^{\rm rKS'}_h,
\\
(\dv\tilde\msigma_h-\msigma^{\rm m}_h,\vv_h)&&=0&\forall\,\vv_h\in \vU^{\rm 0}_h. 
\end{array}
\right.
\end{equation}
It follows that
$$
\|\tilde\msigma_h-\msigma^{\rm m}_h\|_{\dv_h}+\|\tilde\vu_h-\vu_h^{\rm m}\|_{0,\Omega}\cequiv \|\mathcal{A}(\msigma^{\rm m}_h-\mathbb{P}^0_h\msigma^{\rm m}_h)\|_{0,\Omega}\leqslant Ch\|\vf\|_{0,\Omega}.
$$
Therefore,
$$
\|\vu-\vu^{\bve}_h\|_{0,\Omega}\leqslant \|\vu-\tilde{\vu}_h\|_{0,\Omega}+\|\vu^{\bve}_h-\mathbb{P}^0_h\vu^{\bve}_h\|_{0,\Omega}\leqslant \|\vu-\tilde{\vu}_h\|_{0,\Omega}+Ch\|\bve_h(\vu^{\bve}_h)\|_{0,\Omega}\leqslant Ch\|\vf\|_{1,\Omega},
$$
$$
\|\bve_h(\vu^{\bve}_h)-\bve(\vu)\|_{0,\Omega}=\|\mathbb{P}^0_h\mathcal{A}\tilde{\msigma}_h-\mathcal{A}\msigma\|_{0,\Omega}\leqslant C(\|\vu\|_{2,\Omega}+\|\vf\|_{1,\Omega}),
$$
and
$$
\|\mathcal{C}(\bve_h(\vu^{\bve}_h)-\bve(\vu))\|_{0,\Omega}=\|\mathbb{P}^0_h\tilde{\msigma}_h-\msigma\|_{0,\Omega}\leqslant C(\|\vu\|_{2,\Omega}+\|\vf\|_{1,\Omega}).
$$
The proof is completed. 
\end{proof}
\begin{remark}
Theorem \ref{thm:convmNL} presents the convergence of the scheme in broken energy norm. We remark here that, as the shape function space $\vV^{\bve,\rm m}$ does not contain $\left(\begin{array}{c}y\\ -x\end{array}\right)$, the convergence of $\|\nabla_h(\vu-\vu^{\bve}_h)\|_{0,\Omega}$ may not in general be expected. 
\end{remark}

\begin{remark}
In this paper, we do not seek to figure out the basis functions of $\mSigma^{\rm rKS'}_h$ or $\vV^{\rm rKS}_{h0}$. Though, by the virtue of \eqref{eq:kssaddle}, we can write \eqref{eq:feslsto} and \eqref{eq:minimalNL} as expanded formulation by introducing Lagrangian multipliers, and to have the scheme implemented. 
\end{remark}

\end{document}